\newtheorem{theorem}{Theorem}[section]
\newtheorem{lemma}{Lemma}[section]
\newtheorem{corollary}{Corollary}[section]
\newtheorem{remark}{Remark}[section]
\newcommand{\comm}[1]{{\color{black}#1}}
\newcommand{\revise}[1]{{\color{black}#1}}
\begin{document}
\title{A Nonnested Augmented Subspace Method for Eigenvalue Problems with Curved Interfaces\footnote{This work was supported in part
by the National Key Research and Development Program of China (2019YFA0709601), Science Challenge Project (No. TZ2016002),
 National Natural Science Foundations of China (NSFC 11771434,  91730302,
91630201), the National Center for Mathematics and Interdisciplinary Science, CAS.}}
\author{Haikun Dang\footnote{ICMSEC, LSEC, NCMIS,
Academy of Mathematics and Systems Science, Chinese Academy of
Sciences, Beijing 100190, China, and School of Mathematical
Sciences, University of Chinese Academy of Sciences, Beijing,
100049, China ({\tt danghaikun@lsec.cc.ac.cn}).},\ \ \
Hehu Xie\footnote{ICMSEC, LSEC, NCMIS,
Academy of Mathematics and Systems Science, Chinese Academy of
Sciences, Beijing 100190, China, and School of Mathematical
Sciences, University of Chinese Academy of Sciences, Beijing,
100049, China ({\tt hhxie@lsec.cc.ac.cn}).}, \ \
Gang Zhao\footnote{ICMSEC, LSEC, NCMIS,
Academy of Mathematics and Systems Science, Chinese Academy of
Sciences, Beijing 100190, China, and School of Mathematical
Sciences, University of Chinese Academy of Sciences, Beijing,
100049, China ({\tt zhaog6@lsec.cc.ac.cn}).} \ \ \ and \ \
Chenguang Zhou\footnote{ICMSEC, LSEC, NCMIS,
Academy of Mathematics and Systems Science, Chinese Academy of
Sciences, Beijing 100190, China, and School of Mathematical
Sciences, University of Chinese Academy of Sciences, Beijing,
100049, China ({\tt zhouchenguang@lsec.cc.ac.cn}).}}
\date{} \maketitle

\begin{abstract}
In this paper, we present a nonnested augmented subspace algorithm and its multilevel correction method
for solving eigenvalue problems with curved interfaces.
The augmented subspace algorithm and the corresponding multilevel correction method
are designed based on a coarse finite element space which is not the subset of the finer finite element space.
The nonnested augmented subspace method can transform the eigenvalue problem solving on the finest mesh
to the solving linear equation on the same mesh and small scale eigenvalue problem on the low dimensional augmented subspace.
The corresponding theoretical analysis and numerical experiments are provided to demonstrate
the efficiency of the proposed algorithms.

\vskip0.3cm {\bf Keywords.} Nonnested augmented subspace method,  multilevel correction method,
finite element method, eigenvalue problem, curved interface.

\vskip0.2cm {\bf AMS subject classifications.} 65N30, 65N25, 65L15, 65B99.
\end{abstract}

\section{Introduction}\label{intro}
There exist a lot of eigenvalue problems in scientific research and practical engineering.
\revise{Especially, along with the development of modern science and technology,
the scale of eigenvalue problems is becoming larger and larger, which leads to the urgent demand for efficient numerical
methods for eigenvalue problems. It is well known that multigrid methods have been developed to be very mature and
produced an almost complete set of solvers and theoretical systems for solving linear boundary value problems
\cite{BankDupont,Bramble,BramblePasciak,BrambleZhang,BrandtMcCormickRuge,Hackbusch_Book,ScottZhang,shaidurov1995multigrid,ToselliWidlund,Xu,
Xu_Nonsymmetric,XuXiaowen}. On the contrary, the applications of the multigrid methods to solving nonlinear problems and eigenvalue problems
are very few and need more attentions.
In order to use the multigrid method, the normal way is to linearize the nonlinear problems
with some type of nonlinear iteration.
Then we solve the linearized  equations with the help of multigrid methods.
This is always called the outer iteration (nonlinear iteration) plus the inner iteration (multigrid iteration).}
Although the multigrid method has the best efficiency for the inner iteration,
the total computational work is controlled by the number of outer iteration steps.
When the concerned problem has strong nonlinearity and needs many outer iteration steps,
the computational work will be very large even though the multigrid method is used for the inner iteration.
Based on this understanding, the application of multigrid algorithms does not affect the outer iteration
and can not make the total computational work be independent of the nonlinear iterations.

A special example among nonlinear equations is the eigenvalue problem which originates from applied mathematics,
physics, chemistry, cybernetics and other disciplines.
Similarly, the multigrid algorithms for eigenvalue problems have not been developed so well, even there exist some
numerical methods from Hackbusch \cite{Hackbusch}, Brandt \cite{BrandtMcCormickRuge},
Shaidurov \cite{shaidurov1995multigrid} and so on.
Since these multigrid methods are designed based on inverse power method or Rayleigh quotient iteration,
we always need to solve almost singular linear equations during the whole process.
For this reason, the corresponding computational work depends on eigenvalue distributions.
\revise{It is more difficult to design some type of numerical methods for solving the eigenvalue problems
with the optimal computational complexity and storage as that for the linear boundary value problems.}
From this point of view, the application of multigrid method
does not leads to a new eigensolver.

\revise{
In recent years, multilevel correction methods and their corresponding multigrid algorithms
for eigenvalue problems and nonlinear problems have been proposed and discussed in \cite{ChenHeLiXie,ChenXieXue,GongXieYan_SIAM,GongXieYan_JSC,hanYangBi,HanLiXie,HanLiXieYou,HanXieXu,HuXieXu,JiSunXie,JXXX,LinXie_2010,
LinXie_2012,LinXie,LXX,PengBiLiyang,XiJiZhang,Xie_IMA,Xie_JCP,Xie_Nonlinear,Xie_BIT,XieWu,
XieXie,XieXieZhang,XieZhangOwhadi,XieZhou,XuXie,YueXieXie,ZhangXuXie,ZhangHanHeXieYou,ZhangXiJi}.
This type of multilevel correction methods can transform the eigenvalue problem solving
into solving standard linear equations and eigenvalue problems in a very low dimensional space.
This process makes the computational work for solving the eigenvalue problems be equivalent to that for solving
the corresponding linear problems by adjusting the low-dimensional spaces.}
Among these existing multilevel correction and multigrid methods, the concerned sequence of meshes are required to be
nested which means the finite element space defined on the coarse mesh is a subset of the one defined on the finer meshes.
This standard requirement forbids the applications of multilevel correction methods in the adaptive triangulations
which are generated by moving meshes \cite{DiLiTangZhang,LiTangZhang_1,LiTangZhang_2,Miller_2,Miller_1}.
For example, when the eigenvalue problem is defined on the domain with curved interfaces and piecewise constant coefficients,
in order to guarantee the approximation accuracy for the curve interface, we can not produce
the nested coarse and finer meshes for the multilevel correction method.
The aim of this paper is to propose a type of nonnested augmented subspace method and \comm{then multilevel correction scheme}
for solving the eigenvalue problems with curved interfaces and piecewise constant coefficients.

An outline of this paper goes as follows. In Section \ref{FEMEP}, we introduce the finite element method for
the eigenvalue problem and the corresponding error estimate theory. A nonnested augmented subspace method
for the eigenvalue problem is proposed in Section \ref{ASA}. 
In Section \ref{MLCM}, we design a type of multilevel correction method for the eigenvalue problem
based on the augmented subspace method in Section \ref{ASA}.  
In Section \ref{NE}, four numerical examples are provided
to validate the theoretical results and illustrate the efficiency of proposed algorithms in this paper.
Finally, some concluding remarks are given in the last section.


\section{Finite element method of the eigenvalue problem}\label{FEMEP}
This section is devoted to introducing some notation and the standard finite element
method for the eigenvalue problem. In this paper, we shall use the standard notation
for Sobolev spaces $W^{s,p}(\Omega)$ and their
associated norms and semi-norms (cf. \cite{Adams}). For $p=2$, we denote
$H^s(\Omega)=W^{s,2}(\Omega)$ and
$H_0^1(\Omega)=\{v\in H^1(\Omega):\ v|_{\partial\Omega}=0\}$,
where $v|_{\partial\Omega}=0$ is in the sense of trace,
$\|\cdot\|_{s,\Omega}=\|\cdot\|_{s,2,\Omega}$.
In some places, $\|\cdot\|_{s,2,\Omega}$ should be viewed as piecewise
defined if it is necessary.
The letter $C$ (with or without subscripts) denotes a generic
positive constant which may be different at
 its different occurrences throughout the paper.

In this paper, we are concerned with the following second order elliptic eigenvalue problem: Find $(\lambda, u)$ such that
\begin{equation}\label{LaplaceEigenProblem}
\left\{
\begin{array}{rcl}
-\nabla\cdot(\mathcal{A}\nabla u)&=&\lambda u, \ \quad \text{in} \  \Omega,\\
{[}u{]}=0, \ \ {\big[}\mathbf n_\Gamma \mathcal A\nabla u{\big]}&=&0, \ \ \ \quad \text{on}\  \Gamma,\\
u&=&0, \ \ \ \quad \text{on}\  \partial\Omega,
\end{array}
\right.
\end{equation}
where the computing domain $\Omega$ has curved interfaces $\Gamma$ which denotes the set of all involved interfaces,
$\mathcal{A}=(a_{i,j})_{d\times d}$ is a symmetric positive definite matrix
and $a_{i,j}\in W^{0,\infty}(\Omega) \ (i,j=1,2,\cdots ,d)$ are piecewise constants.
In this paper, $[v] := (v|_{\Omega_i} )|_{\Gamma}-(v|_{\Omega_j})|_{\Gamma}$ for any  function $v$ in $H^1(\Omega)$,
where $\Omega_i$ and $\Omega_j$ are any two adjacent subdomains and
$\mathbf n_\Gamma$ denotes a unit normal vector from $\Omega_i$ to $\Omega_j$ across the interface.
Figure \ref{fig_domain} shows an example of computing domain with four curved interfaces.
\begin{figure}[!htpb]
\centering
\includegraphics[width=7cm,height=5cm]{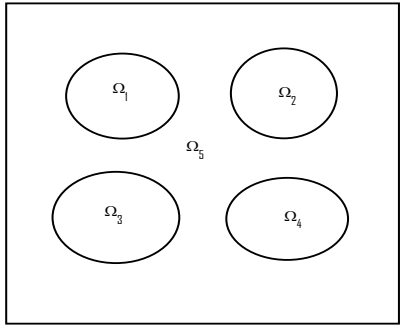}
\caption{Domain with curved interfaces}\label{fig_domain}
\end{figure}

In order to use the finite element method to solve
the eigenvalue problem (\ref{LaplaceEigenProblem}), we  define
the corresponding variational form as follows:
Find $(\lambda, u )\in \mathcal{R}\times V$ such that $a(u,u)=1$ and
\begin{eqnarray}\label{weak_eigenvalue_problem}
a(u,v)=\lambda b(u,v),\quad \forall v\in V,
\end{eqnarray}
where $V:=H_0^1(\Omega)$,  and the  bilinear forms $a(\cdot,\cdot)$ and $b(\cdot,\cdot)$ are defined as
\begin{equation}\label{inner_product_a_b}
a(u,v)=\int_{\Omega}\mathcal{A}\nabla u\cdot\nabla v d\Omega,
 \ \ \ \  \ \ b(u,v) = \int_{\Omega}uv d\Omega.
\end{equation}
The norms $\|\cdot\|_a$ and $\|\cdot\|_b$ are defined by
\begin{eqnarray*}
\|v\|_a=\sqrt{a(v,v)}\ \ \ \ \ {\rm and}\ \ \ \ \ \|v\|_b=\sqrt{b(v,v)}.
\end{eqnarray*}
It is easy to known that $a(u,v)$ satisfies boundedness and coercive property on $V$, i.e.,
\begin{eqnarray}\label{coercive}
a(u,v)\leq C_a\|u\|_{1,\Omega}\|v\|_{1,\Omega}\ \
\text{and} \ \ c_a\|u\|_{1,\Omega}^2\leq a(u,u), \quad \forall\ u,v\in V.
\end{eqnarray}
Then the norm $\|\cdot\|_a$ is equivalent to the one $\|\cdot\|_1$.

It is standard that the eigenvalue problem (\ref{weak_eigenvalue_problem})
has an eigenvalue sequence $\{\lambda_j \}$ (cf. \cite{BabuskaOsborn_1989,Chatelin}):
$$0<\lambda_1 < \lambda_2\leq\cdots\leq\lambda_k\leq\cdots,\ \ \
\lim_{k\rightarrow\infty}\lambda_k=\infty,$$
and the associated eigenfunctions
$$u_1, u_2, \cdots, u_k, \cdots,$$
where $a(u_i,u_j)=\delta_{ij}$ ($\delta_{ij}$ is the Kronecker function).
In the sequence $\{\lambda_j\}$, the $\lambda_j$ are repeated based on their geometric multiplicity.

For the theoretical analysis in this paper, we present the definition corresponding to the smallest eigenvalue $\lambda_1$
(c.f. \cite{BabuskaOsborn_1989,Chatelin}) as follows
\begin{eqnarray}\label{Smallest_Eigenvalue}
\lambda_1 = \min_{0\neq w\in V}\frac{a(w,w)}{b(w,w)}.
\end{eqnarray}

Now, we come to introduce the finite element method for (\ref{weak_eigenvalue_problem}).
First, let us define the finite element space.
Let $\mathcal{T}_h$ be a regular partition of $\Omega\subset \mathbb R^d \ (d=2, 3)$ which means
a two-dimensional domain is divided into regular triangles or quadrangles
(a three-dimensional domain is divided into tetrahedrons or hexahedrons) \cite{BrennerScott, Ciarlet}.
Denote the diameter of a element $K \in \mathcal{T}_h$ by $h_K$, and $h$ describes
the maximum diameter of all elements of $\mathcal{T}_h$.
In order to guarantee the accuracy of finite element spaces, the domain is usually \revise{partitioned along the interior edges} (or faces)
so that the partition has a certain approximating accuracy to the curved interfaces.
Based on the mesh $\mathcal{T}_h$, we can construct a finite element space denoted by
 $V_h \subset V$. For simplicity, we set $V_h$ as the linear finite
 element space which is defined as follows
\begin{equation}\label{linear_fe_space}
  V_h = \big\{ v_h \in C(\Omega)\ \big|\ v_h|_{K} \in \mathcal{P}_1,
  \ \ \forall K \in \mathcal{T}_h\big\}\cap H^1_0(\Omega),
\end{equation}
where $\mathcal{P}_1$ denotes the linear function space.
\revise{Since the appearance of curved interfaces and the accuracy requirement,
there is no  nested sequence of meshes as that for the polygonal domains.
Then we have no nested sequence of finite element spaces which is always needed in the multigrid method.}

Based on the  space $V_h$, the standard finite element scheme for eigenvalue
problem (\ref{weak_eigenvalue_problem}) is:
Find $(\bar{\lambda}_h, \bar{u}_h)\in \mathcal{R}\times V_h$
such that $a(\bar{u}_h,\bar{u}_h)=1$ and
\begin{eqnarray}\label{Weak_Eigenvalue_Discrete}
a(\bar{u}_h,v_h)=\bar{\lambda}_h b(\bar{u}_h,v_h),\quad\ \  \ \forall\ v_h\in V_h.
\end{eqnarray}
From \cite{BabuskaOsborn_1989,Chatelin}, we know that the discrete eigenvalue problem (\ref{Weak_Eigenvalue_Discrete}) has an eigenvalue sequence
$$0<\bar{\lambda}_{1,h}\leq \bar{\lambda}_{2,h}\leq\cdots\leq \bar{\lambda}_{k,h}
\leq\cdots\leq \bar{\lambda}_{N_h,h},$$
and the corresponding discrete eigenfunction sequence
$$\bar{u}_{1,h}, \bar{u}_{2,h}, \cdots, \bar{u}_{k,h}, \cdots, \bar{u}_{N_h,h},$$
where $a(\bar{u}_{i,h},\bar{u}_{j,h})=\delta_{ij}$, $1\leq i,j\leq N_h$ ($N_h=\textrm{dim}V_h$).\\

In order to measure the error of the finite element space to the desired function, we define the following notation
\begin{eqnarray}\label{Delta_V_h}
\delta(w,V_h) = \inf_{v_h\in V_h}\|w-v_h\|_a,\ \ \ {\rm for}\ w\in V.
\end{eqnarray}
In this paper, we also need the following quantity for error analysis:
\revise{
\begin{eqnarray}\label{Definition_Eta_a_h}
\eta_a(V_h)&=&\sup_{\substack{ f\in L^2(\Omega)\\ \|f\|_b=1}}\inf_{v_h\in V_h}\|Tf-v_h\|_{a},\label{eta_a_h_Def}
\end{eqnarray}
}
where $T:L^2(\Omega)\rightarrow V$ is defined as
\begin{equation}\label{laplace_source_operator}
\revise{a(Tf,v) = b(f,v), \ \ \ \ \  \forall v\in V\ \ {\rm for}\  f \in L^2(\Omega).}
\end{equation}
It is known that $\eta_a(V_h)\rightarrow 0$ when $h\rightarrow 0$ (c.f. \cite{Babuska_1970,ChenZou,Li201019}).
Based on the finite element space $V_h$, we define the finite element projection operator
$\mathcal P_h: V\rightarrow V_h$ as follows
\begin{eqnarray}\label{Definition_FEM_Projection}
a(w, v_h)=a(\mathcal P_hw, v_h),\ \ \ \ \forall\ v_h\in V_h, \ \textrm{for}\ w\in V.
\end{eqnarray}
It is obvious that $\delta(u,V_h) = \|u-\mathcal P_hu\|_a$.

In order to introduce and analyze the nonnested augmented subspace algorithm and the corresponding
multilevel correction method for the eigenvalue problem, we state the following error estimate results from \cite{XieZhangOwhadi}
which include only explicit constants. For more details, please refer to \cite{XieZhangOwhadi}.

It should be pointed out that the following error estimate results hold for general finite-dimensional approximations
of eigenvalue problems.
\begin{lemma}\label{Error_Estimate_Theorem}(\cite{XieZhangOwhadi})
Let $(\lambda,u)$ be an exact eigenpair of the eigenvalue problem (\ref{weak_eigenvalue_problem}).
Assume the eigenpair approximation $(\bar\lambda_{i,h},\bar u_{i,h})$ has the property that
$\bar\mu_{i,h}=1/\bar\lambda_{i,h}$ is the closest to $\mu=1/\lambda$.
The corresponding spectral projection operators $E_{i,h}: V\mapsto{\rm span}\{\bar u_{i,h}\}$
and $E: V\mapsto {\rm span}\{u\}$ are defined as follows
\begin{eqnarray*}
&&a(E_{i,h}w,\bar u_{i,h}) = a(w,\bar u_{i,h}),\ \ \  {\rm for}\  w\in V,\\
&&a(Ew, u) = a(w, u),\ \ \ \ \ \ \ \ \ \ \ \ {\rm for}\  w\in V.
\end{eqnarray*}
The finite element approximation $\bar u_{i,h}$ has the following error estimate
\begin{eqnarray}\label{Energy_Error_Estimate}
\|u-E_{i,h}u\|_a&\leq& \sqrt{1+\frac{\bar\mu_{1,h}}{\delta_{\lambda,h}^2}\eta_a^2(V_h)}\delta(u, V_h),
\end{eqnarray}
where $\eta_a(V_h)$ is defined from (\ref{Definition_Eta_a_h}) and $\delta_{\lambda,h}$ is defined as
\begin{eqnarray}\label{Definition_Delta}
\delta_{\lambda,h} &:=& \min_{j\neq i}|\bar\mu_{j,h}-\mu|=\min_{j\neq i}
\Big|\frac{1}{\bar\lambda_{j,h}}-\frac{1}{\lambda}\Big|.
\end{eqnarray}
Moreover, the eigenfunction approximation $\bar u_{i,h}$ has the following error estimate corresponding to $L^2$-norm
\begin{eqnarray}\label{L2_Error_Estimate}
\|u-E_{i,h}u\|_b &\leq&\Big(1+\frac{\bar\mu_{1,h}}{\delta_{\lambda,h}}\Big)\eta_a(V_h)\|u-E_{i,h}u\|_a.
\end{eqnarray}
\end{lemma}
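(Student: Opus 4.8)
The plan is to recast both estimates in the spectral framework of the solution operators associated with $a(\cdot,\cdot)$ and $b(\cdot,\cdot)$. I would define $T_h:L^2(\Omega)\to V_h$ by $a(T_hf,v_h)=b(f,v_h)$ for all $v_h\in V_h$, so that $T_h=\mathcal P_hT$ and $T_h\bar u_{j,h}=\bar\mu_{j,h}\bar u_{j,h}$, while $Tu=\mu u$. Since $\{\bar u_{j,h}\}$ is an $a$-orthonormal basis of $V_h$ made of $T_h$-eigenfunctions, it is also $b$-orthogonal with $\|\bar u_{j,h}\|_b^2=\bar\mu_{j,h}$, and for any $v_h\in V_h$ one has $\|v_h\|_b^2\le\bar\mu_{1,h}\|v_h\|_a^2$ from the discrete Rayleigh quotient characterization of $\bar\lambda_{1,h}=1/\bar\mu_{1,h}$.

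For the energy estimate I would expand $\mathcal P_hu=\sum_j\alpha_j\bar u_{j,h}$ with $\alpha_j=a(u,\bar u_{j,h})$; then $E_{i,h}u=\alpha_i\bar u_{i,h}$ and, since $u-\mathcal P_hu$ is $a$-orthogonal to $V_h$, the Pythagorean identity gives $\|u-E_{i,h}u\|_a^2=\delta(u,V_h)^2+\sum_{j\ne i}\alpha_j^2$. The crux is to control the tail. Testing the continuous relation $b(u,v)=\mu\,a(u,v)$ against $v=\bar u_{j,h}$ and the discrete relation $b(\bar u_{j,h},v_h)=\bar\mu_{j,h}a(\bar u_{j,h},v_h)$ against $v_h=\mathcal P_hu$, then subtracting, yields the key coefficient identity $(\mu-\bar\mu_{j,h})\alpha_j=b(u-\mathcal P_hu,\bar u_{j,h})$. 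For $j\ne i$ the prefactor is bounded below by $\delta_{\lambda,h}$, so $\sum_{j\ne i}\alpha_j^2\le\delta_{\lambda,h}^{-2}\sum_j b(u-\mathcal P_hu,\bar u_{j,h})^2$; rewriting $b(u-\mathcal P_hu,\bar u_{j,h})=a(T_h(u-\mathcal P_hu),\bar u_{j,h})$ and combining Parseval with $\|T_h\cdot\|_a\le\sqrt{\bar\mu_{1,h}}\,\|\cdot\|_b$ turns this into $\sum_{j\ne i}\alpha_j^2\le(\bar\mu_{1,h}/\delta_{\lambda,h}^2)\|u-\mathcal P_hu\|_b^2$. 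An Aubin--Nitsche duality step, in which $\|u-\mathcal P_hu\|_b=\sup_{\|g\|_b=1}a(u-\mathcal P_hu,Tg-\mathcal P_hTg)\le\eta_a(V_h)\|u-\mathcal P_hu\|_a$, replaces the $b$-norm by $\eta_a(V_h)\,\delta(u,V_h)$, and substitution into the Pythagorean identity produces exactly (\ref{Energy_Error_Estimate}).

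For the $L^2$ estimate I would split $u-E_{i,h}u=(u-\mathcal P_hu)+\sum_{j\ne i}\alpha_j\bar u_{j,h}$ and bound the two pieces in $\|\cdot\|_b$. The first obeys $\|u-\mathcal P_hu\|_b\le\eta_a(V_h)\|u-\mathcal P_hu\|_a\le\eta_a(V_h)\|u-E_{i,h}u\|_a$, using the same duality bound and $\|u-\mathcal P_hu\|_a\le\|u-E_{i,h}u\|_a$. For the second, $b$-orthogonality gives $\|\sum_{j\ne i}\alpha_j\bar u_{j,h}\|_b^2=\sum_{j\ne i}\bar\mu_{j,h}\alpha_j^2$; inserting the coefficient identity again, with $\bar\mu_{j,h}\le\bar\mu_{1,h}$ in the numerator and $\delta_{\lambda,h}$ in the denominator, and reusing the Parseval/operator-norm bound, gives $\|\sum_{j\ne i}\alpha_j\bar u_{j,h}\|_b\le(\bar\mu_{1,h}/\delta_{\lambda,h})\eta_a(V_h)\|u-E_{i,h}u\|_a$, with the gap now entering to the first power. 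Adding the two contributions produces the factor $1+\bar\mu_{1,h}/\delta_{\lambda,h}$ of (\ref{L2_Error_Estimate}).

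The step I expect to be the main obstacle is establishing the coefficient identity $(\mu-\bar\mu_{j,h})\alpha_j=b(u-\mathcal P_hu,\bar u_{j,h})$ and then tracking the constants cleanly so that $\bar\mu_{1,h}$ and the spectral gap $\delta_{\lambda,h}$ appear with the exact powers stated. The asymmetry between the squared gap in the energy bound and the first-power gap in the $L^2$ bound arises precisely from whether one applies Parseval to $T_h(u-\mathcal P_hu)$ alone or combines it with the extra factor $\sqrt{\bar\mu_{j,h}}$ coming from the $b$-norm of the discrete eigenfunctions, and this bookkeeping must be carried out carefully to avoid spurious extra factors.
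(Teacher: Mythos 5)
Your proof is correct: the coefficient identity $(\mu-\bar\mu_{j,h})\alpha_j=b(u-\mathcal P_hu,\bar u_{j,h})$, the Pythagorean/Parseval bookkeeping with the discrete basis (using $\|T_hw\|_a\le\sqrt{\bar\mu_{1,h}}\|w\|_b$ and $\|\bar u_{j,h}\|_b^2=\bar\mu_{j,h}$), and the duality bound $\|u-\mathcal P_hu\|_b\le\eta_a(V_h)\|u-\mathcal P_hu\|_a$ all hold as claimed and reproduce exactly the constants in (\ref{Energy_Error_Estimate}) and (\ref{L2_Error_Estimate}). The paper itself gives no proof of Lemma \ref{Error_Estimate_Theorem}---it is quoted from \cite{XieZhangOwhadi}---and your argument is essentially the spectral-expansion proof of that reference, so this is the same approach.
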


\revise{For the convenience of analysis,  we state the following corollary
which is based on Lemma \ref{Error_Estimate_Theorem}.}

\begin{corollary}\label{Error_Superclose_Theorem}
Under the assumption of Lemma \ref{Error_Estimate_Theorem}, we have following error estimates
\begin{eqnarray}
\|\lambda u - \bar\lambda_{i,h}\bar u_{i,h}\|_b &\leq& C_\lambda
\eta_a(V_h)\|u-\bar u_{i,h}\|_a, \label{Err_Norm_0_Lambda}\\
\|u-\bar u_{i,h}\|_a &\leq& \frac{1}{1-D_\lambda\eta_a(V_h)}\delta(u,V_h),\label{Err_Norm_1_Superclose}
\end{eqnarray}
where the constants $C_\lambda$ and $D_\lambda$ are defined as
\begin{eqnarray}
&&C_\lambda = 2|\lambda|\Big(1+\frac{1}{\lambda_1\delta_{\lambda,h}}\Big)
+ \bar\lambda_{i,h}\sqrt{1+\frac{1}{\lambda_1\delta_{\lambda,h}^2}\eta_a^2(V_h)},\\
&&D_\lambda = \frac{1}{\sqrt{\lambda_1}}\left(2|\lambda|\Big(1+\frac{1}{\lambda_1\delta_{\lambda,h}}\Big)
+ \bar\lambda_{i,h} \sqrt{1+\frac{1}{\lambda_1\delta_{\lambda,h}^2}\eta_a^2(V_h)}\right).\label{Definition_D_Lambda}
\end{eqnarray}
\end{corollary}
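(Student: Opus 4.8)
The plan is to prove the two inequalities in order, deriving (\ref{Err_Norm_1_Superclose}) from (\ref{Err_Norm_0_Lambda}). Everything rests on the normalizations $a(u,u)=1$ and $a(\bar u_{i,h},\bar u_{i,h})=1$ from (\ref{weak_eigenvalue_problem}) and (\ref{Weak_Eigenvalue_Discrete}), the Poincar\'e-type bound $\|w\|_b\le\lambda_1^{-1/2}\|w\|_a$ read off from (\ref{Smallest_Eigenvalue}), and the two estimates (\ref{Energy_Error_Estimate})--(\ref{L2_Error_Estimate}) of Lemma \ref{Error_Estimate_Theorem}. Since the lemma controls $u-E_{i,h}u$ while the corollary is stated for $u-\bar u_{i,h}$, a preliminary step is to relate the two. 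As $E_{i,h}u=a(u,\bar u_{i,h})\bar u_{i,h}$ is a scalar multiple of $\bar u_{i,h}$, writing $\beta=a(u,\bar u_{i,h})$ and expanding the energy norm gives $\|u-\bar u_{i,h}\|_a^2=2(1-\beta)$ and $\|u-E_{i,h}u\|_a^2=1-\beta^2$, so $\|u-E_{i,h}u\|_a\le\|u-\bar u_{i,h}\|_a$, while the discrepancy $\|E_{i,h}u-\bar u_{i,h}\|_b=\tfrac12\|u-\bar u_{i,h}\|_a^2\,\bar\lambda_{i,h}^{-1/2}$ is quadratically small. I would also record the elementary relation $\delta(u,V_h)\le\sqrt{\lambda}\,\eta_a(V_h)$, obtained by testing the supremum in (\ref{Definition_Eta_a_h}) with $f=\lambda u/\|\lambda u\|_b$ and using $T(\lambda u)=u$ from (\ref{laplace_source_operator}); this is the device that converts quadratic eigenfunction errors back into the target form $\eta_a(V_h)\|u-\bar u_{i,h}\|_a$.

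For (\ref{Err_Norm_0_Lambda}) I would split $\lambda u-\bar\lambda_{i,h}\bar u_{i,h}=\lambda(u-\bar u_{i,h})+(\lambda-\bar\lambda_{i,h})\bar u_{i,h}$ and bound the two pieces in $\|\cdot\|_b$ separately. The first piece uses the $L^2$ estimate (\ref{L2_Error_Estimate}): passing through $E_{i,h}u$ by the triangle inequality, using the bound on $\|E_{i,h}u-\bar u_{i,h}\|_b$ above, and replacing $\bar\mu_{1,h}=1/\bar\lambda_{1,h}\le1/\lambda_1$ (valid since $V_h\subset V$ forces $\bar\lambda_{1,h}\ge\lambda_1$), it yields $|\lambda|\,\|u-\bar u_{i,h}\|_b\le 2|\lambda|\big(1+\tfrac{1}{\lambda_1\delta_{\lambda,h}}\big)\eta_a(V_h)\|u-\bar u_{i,h}\|_a$, the factor $2$ being the price of passing from $E_{i,h}u$ to $\bar u_{i,h}$. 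The second piece uses the Rayleigh-quotient identity $\bar\lambda_{i,h}-\lambda=\bar\lambda_{i,h}\big(\|u-\bar u_{i,h}\|_a^2-\lambda\|u-\bar u_{i,h}\|_b^2\big)$, which holds because $u$ is an exact eigenfunction, together with $\|\bar u_{i,h}\|_b=\bar\lambda_{i,h}^{-1/2}$; feeding in the energy estimate (\ref{Energy_Error_Estimate}) (again with $\bar\mu_{1,h}\le1/\lambda_1$) and $\delta(u,V_h)\le\sqrt{\lambda}\,\eta_a(V_h)$ turns the resulting quadratic term into $\bar\lambda_{i,h}\sqrt{1+\tfrac{1}{\lambda_1\delta_{\lambda,h}^2}\eta_a^2(V_h)}\,\eta_a(V_h)\|u-\bar u_{i,h}\|_a$. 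Adding the two contributions reproduces the constant $C_\lambda$.

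Estimate (\ref{Err_Norm_1_Superclose}) then follows cleanly. I would start from $\|u-\bar u_{i,h}\|_a\le\delta(u,V_h)+\|\mathcal P_hu-\bar u_{i,h}\|_a$ and set $\phi_h=\mathcal P_hu-\bar u_{i,h}\in V_h$. Using the definition (\ref{Definition_FEM_Projection}) of $\mathcal P_h$ and then testing the continuous and discrete eigenvalue equations against $\phi_h$, one gets $\|\phi_h\|_a^2=a(u-\bar u_{i,h},\phi_h)=b(\lambda u-\bar\lambda_{i,h}\bar u_{i,h},\phi_h)$. Cauchy--Schwarz and $\|\phi_h\|_b\le\lambda_1^{-1/2}\|\phi_h\|_a$ give $\|\phi_h\|_a\le\lambda_1^{-1/2}\|\lambda u-\bar\lambda_{i,h}\bar u_{i,h}\|_b$, and applying (\ref{Err_Norm_0_Lambda}) yields $\|\phi_h\|_a\le D_\lambda\eta_a(V_h)\|u-\bar u_{i,h}\|_a$, where the relation $D_\lambda=\lambda_1^{-1/2}C_\lambda$ visible in (\ref{Definition_D_Lambda}) appears precisely for this reason. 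Absorbing this term on the left gives $\big(1-D_\lambda\eta_a(V_h)\big)\|u-\bar u_{i,h}\|_a\le\delta(u,V_h)$, i.e.\ (\ref{Err_Norm_1_Superclose}).

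The main obstacle I anticipate is the bookkeeping of the explicit constants in (\ref{Err_Norm_0_Lambda}), not any conceptual difficulty. Lemma \ref{Error_Estimate_Theorem} is phrased through the spectral projection $E_{i,h}u$ and through $\delta(u,V_h)$, whereas $C_\lambda$ is phrased through $\bar u_{i,h}$, $\eta_a(V_h)$ and the upper bound $1/\lambda_1$ for $\bar\mu_{1,h}$; reconciling these requires the normalization identities for $\beta$, a careful use of $\delta(u,V_h)\le\sqrt{\lambda}\,\eta_a(V_h)$ to demote the quadratic eigenvalue-error term, and a consistent substitution $\bar\mu_{1,h}\mapsto1/\lambda_1$. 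Keeping track of signs in the Rayleigh-quotient identity and verifying that the subdominant quadratic contributions are genuinely dominated, so that the stated constant and not a larger one results, is the delicate part.
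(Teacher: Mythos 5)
Your reconstruction cannot be checked against a proof in the paper, because the paper gives none: Corollary \ref{Error_Superclose_Theorem} is stated as following from Lemma \ref{Error_Estimate_Theorem} and deferred to \cite{XieZhangOwhadi}. Judged against the standard argument in that literature, your route is the right one, and every identity you record is correct: $E_{i,h}u=\beta\bar u_{i,h}$ with $\beta=a(u,\bar u_{i,h})$, $\|u-\bar u_{i,h}\|_a^2=2(1-\beta)$ and $\|u-E_{i,h}u\|_a^2=1-\beta^2$ (so $\|u-E_{i,h}u\|_a\le\|u-\bar u_{i,h}\|_a$ once the sign of $\bar u_{i,h}$ is fixed so that $\beta\ge0$), $\|\bar u_{i,h}\|_b=\bar\lambda_{i,h}^{-1/2}$, the Rayleigh-quotient identity $\bar\lambda_{i,h}-\lambda=\bar\lambda_{i,h}\big(\|u-\bar u_{i,h}\|_a^2-\lambda\|u-\bar u_{i,h}\|_b^2\big)$, the min--max bound $\bar\mu_{1,h}\le1/\lambda_1$, and the useful device $\delta(u,V_h)\le\sqrt{\lambda}\,\eta_a(V_h)$ via $T(\lambda u)=u$ and $\|\lambda u\|_b=\sqrt{\lambda}$. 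Your passage from (\ref{Err_Norm_0_Lambda}) to (\ref{Err_Norm_1_Superclose}) --- Galerkin orthogonality giving $\|\phi_h\|_a^2=b(\lambda u-\bar\lambda_{i,h}\bar u_{i,h},\phi_h)$, then $\|\phi_h\|_b\le\lambda_1^{-1/2}\|\phi_h\|_a$ and absorption, which is exactly where $D_\lambda=\lambda_1^{-1/2}C_\lambda$ comes from --- is precisely the mechanism the paper itself deploys in (\ref{One_Correction_1})--(\ref{One_Correction_2}) in the proof of Theorem \ref{Error_Estimate_One_Smoothing_Theorem}, so that half is beyond doubt.

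The one substantive point is the issue you flag but do not close: absorbing the quadratic remainders into exactly the stated constants. For the $|\lambda|\,\|u-\bar u_{i,h}\|_b$ piece this works uniformly: since $\eta_a(V_h)\le\lambda_1^{-1/2}$ (take $v_h=0$ in (\ref{Definition_Eta_a_h}) and use $\|Tf\|_a\le\lambda_1^{-1/2}\|f\|_b$), one has $\sqrt{1+\eta_a^2(V_h)/(\lambda_1\delta_{\lambda,h}^2)}\le\sqrt{2}\,\big(1+1/(\lambda_1\delta_{\lambda,h})\big)$, and combining $\|u-\bar u_{i,h}\|_a\le\sqrt{2}\,\|u-E_{i,h}u\|_a$ with (\ref{Energy_Error_Estimate}), $\delta(u,V_h)\le\sqrt{\lambda}\,\eta_a(V_h)$ and $\lambda\le\bar\lambda_{i,h}$ yields $\tfrac12\bar\lambda_{i,h}^{-1/2}\|u-\bar u_{i,h}\|_a\le\big(1+1/(\lambda_1\delta_{\lambda,h})\big)\eta_a(V_h)$, so your factor $2$ genuinely suffices there. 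For the eigenvalue piece, however, the same crude chain produces the extra factor $\sqrt{2\lambda/\big((1+\beta)\bar\lambda_{i,h}\big)}$, which exceeds $1$ unless $\beta$ is close enough to $1$; a sufficient condition is $\lambda\|u-\bar u_{i,h}\|_b^2\le\tfrac34\|u-\bar u_{i,h}\|_a^2$, which follows from (\ref{L2_Error_Estimate}) and your step 1 once $\eta_a(V_h)$ is small enough. So the corollary with exactly the constants $C_\lambda$, $D_\lambda$ holds under a smallness assumption on $\eta_a(V_h)$ that the paper tacitly makes anyway --- (\ref{Err_Norm_1_Superclose}) is vacuous unless $D_\lambda\eta_a(V_h)<1$ --- but a complete write-up should state this hypothesis explicitly rather than leave it as anticipated bookkeeping.
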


\section{Augmented subspace algorithm}\label{ASA}
In this section, a nonnested augmented subspace method will be designed for eigenvalue problems.
With the help of the coarse space on a coarse mesh, the proposed method can transform the solution
of the eigenvalue problem to the corresponding linear boundary value
problems and eigenvalue problems on a very low dimensional augmented space.
Different from the augmented subspace or multilevel correction scheme from \cite{LinXie,Xie_JCP,Xie_IMA,XieZhangOwhadi},
the coarse space here is not the subspace of the finer finite element spaces.

In order to define the nonnested augmented subspace method, we generate a coarse mesh $\mathcal{T}_H$
with the mesh size $H$ and the coarse linear finite element space $V_H$ is
defined on the mesh $\mathcal{T}_H$.
The grids $\mathcal{T}_H$ and $\mathcal{T}_h$ have no nested properties, which results in $V_H\not\subset V_h$.
With the help of $V_H$, an augmented subspace can be designed as $V_{H,h}:= V_H +\textrm{span}\{\widetilde u_h\}$,
where $\widetilde u_h\in V_h$ denotes a finite element function defined on the finer mesh.
Although $V_H$ and $V_h$ have no nested properties, the augmented subspace $V_{H,h}$
is a finite-dimensional subspace of $V$. Therefore, we know that the error estimates in
Lemma \ref{Error_Estimate_Theorem} and Corollary \ref{Error_Superclose_Theorem} still hold for $V_{H,h}$.

Assume  we have obtained an  approximation $(\lambda_h^{(\ell)}, u_h^{(\ell)})$ for a certain
exact eigenpair. The augmented subspace iteration algorithm defined by Algorithm \ref{one correction step_Eig}
is used to improve the accuracy of $(\lambda_h^{(\ell)}, u_h^{(\ell)})$.
Here the superscript $\ell$ denotes  iteration index and
$(\lambda_h^{(\ell)}, u_h^{(\ell)})$ is the inputted eigenpair.
\begin{algorithm}[htbp]
\caption{Augmented subspace iteration algorithm}\label{one correction step_Eig}
\begin{enumerate}
\item Define the following linear boundary value problem: Find $\widehat{u}_h^{(\ell+1)}\in V_h$ such that
\begin{equation}\label{correct_source_exact_para}
a(\widehat{u}_h^{(\ell+1)},v_h) = \lambda_h^{(\ell)}b(u_h^{(\ell)},v_h),\ \  \forall\ v_h\in V_h.
\end{equation}
\revise{Solve (\ref{correct_source_exact_para}) with initial value $u_{h_{k}}^{(\ell)}$ and some algebraic multigrid steps
to obtain a new eigenfunction approximation $\widetilde{u}_h^{(\ell+1)}$
which satisfies the following estimate}
\begin{equation}\label{Contraction_Rate}
\|\widehat{u}_h^{(\ell+1)}-\widetilde{u}_h^{(\ell+1)}\|_a\leq \theta  \|\widehat{u}_h^{(\ell+1)} -u_{h_{k}}^{(\ell)}\|_a,
\end{equation}
where $\theta<1$ is independent of the mesh size $h$ and the iteration number $\ell$.

\item Define the augmented subspace $V_{H,h} = V_H +{\rm span}\{\widetilde{u}_h^{(\ell+1)}\}$
and solve the following eigenvalue problem:
Find $(\lambda_h^{(\ell+1)},u_h^{(\ell+1)})\in \mathbb R\times V_{H,h}$ such that $a(u_h^{(\ell+1)},u_h^{(\ell+1)})=1$ and
\begin{equation}\label{parallel_correct_eig_exact}
a(u_h^{(\ell+1)},v_{H,h}) = \lambda_h^{(\ell+1)}b(u_h^{(\ell+1)},v_{H,h}),
\ \ \ \ \ \forall\ v_{H,h}\in V_{H,h}.
\end{equation}
Solve (\ref{parallel_correct_eig_exact}) and the output $(\lambda_h^{(\ell+1)}, u_h^{(\ell+1)})$
is chosen such that $u_h^{(\ell+1)}$ has the largest component in ${\rm span}\{\widetilde u_h^{(\ell+1)}\}$
among all eigenfunctions of (\ref{parallel_correct_eig_exact}).
\end{enumerate}
Summarize \revise{abovementioned} two steps by defining
$$(\lambda_h^{(\ell+1)},u_h^{(\ell+1)}) = {\tt AugSubspace}(\lambda_h^{(\ell)},u_h^{(\ell)}, V_H, V_h).$$
\end{algorithm}

In order to simplify the notation, we assume the eigenvalue gap $\delta_{\lambda,h}$
has a uniform lower bound which is denoted by $\delta_\lambda$ (which can be seen as the
``true'' separation of the eigenvalue $\lambda$ from others).
This assumption is reasonable when the mesh size is small enough. We refer to
\cite[Theorem 4.6]{Saad} and Lemma \ref{Error_Estimate_Theorem} in this paper for details of the
dependence of error estimates on the eigenvalue gap.
\begin{theorem}\label{Error_Estimate_One_Smoothing_Theorem}
Assume there exists an exact eigenpair $(\lambda, u)$ such that
the eigenpair approximation $(\lambda_{h_k}^{(\ell)},u_{h_k}^{(\ell)})$ satisfies
\begin{eqnarray}\label{Condition_1}
\|\lambda u-\lambda_h^{(\ell)}u_h^{(\ell)}\|_b&\leq&
\bar C_\lambda \eta_a(V_H)\|u - u_h^{(\ell)}\|_a.\label{Estimate_h_k_1_b}
\end{eqnarray}

Then the eigenpair approximation $(\lambda_h^{(\ell+1)},u_h^{(\ell+1)})\in\mathbb R\times V_h$ obtained
by Algorithm \ref{one correction step_Eig} satisfies
\begin{eqnarray}
\|u - u_h^{(\ell+1)}\|_a &\leq & \gamma\ \|u-u_h^{(\ell)}\|_a+\zeta\ \|u-\mathcal P_hu\|_a,\label{Estimate_h_k_1_a}\\
\|\lambda u-\lambda_h^{(\ell+1)}u_h^{(\ell+1)}\|_b&\leq&
\bar C_\lambda \eta_a(V_H)\|u - u_h^{(\ell+1)}\|_a,\label{Estimate_h_k_1_b}
\end{eqnarray}
where the constants $\gamma$, $\zeta$, $\bar C_\lambda$ and $\bar D_\lambda$ are defined as
\begin{eqnarray}
\gamma &=& \frac{1}{1-\bar D_\lambda \eta_a(V_H)}
\Big(\theta+(1+\theta)\frac{\bar C_\lambda}{\sqrt{\lambda_1}}\eta_a(V_H)\Big)\,,\label{Gamma_Definition}\\
\zeta &=&\frac{1+\theta}{1-\bar D_\lambda \eta_a(V_H)},\label{definition_Zeta}\\
\bar C_\lambda &=& 2|\lambda|\Big(1+\frac{1}{\lambda_1\delta_\lambda}\Big)
+ \bar\lambda_{i,H}\sqrt{1+\frac{1}{\lambda_1\delta_\lambda^2}\eta_a^2(V_H)},\label{Definition_C_Bar}\\
\bar D_\lambda &=& \frac{1}{\sqrt{\lambda_1}}\left(2|\lambda|\Big(1+\frac{1}{\lambda_1\delta_\lambda}\Big)
+ \bar\lambda_{i,H}\sqrt{1+\frac{1}{\lambda_1\delta_\lambda^2}\eta_a^2(V_H)}\right).\label{Definition_D_Lambda_Bar}
\end{eqnarray}
\end{theorem}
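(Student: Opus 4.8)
The plan is to reduce one application of {\tt AugSubspace} to the abstract error estimates of Corollary \ref{Error_Superclose_Theorem}, applied on the finite-dimensional space $V_{H,h}$. The starting observation is that the linear problem (\ref{correct_source_exact_para}) is just a finite element projection: since the exact eigenpair satisfies $a(u,v)=\lambda b(u,v)=b(\lambda u,v)=a(T(\lambda u),v)$ for all $v\in V$, we have $u=T(\lambda u)$, while (\ref{correct_source_exact_para}) together with (\ref{laplace_source_operator}) and (\ref{Definition_FEM_Projection}) gives $\widehat u_h^{(\ell+1)}=\mathcal P_h T\big(\lambda_h^{(\ell)}u_h^{(\ell)}\big)$. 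Consequently $\mathcal P_h u-\widehat u_h^{(\ell+1)}=\mathcal P_h T\big(\lambda u-\lambda_h^{(\ell)}u_h^{(\ell)}\big)$, and the main job is to convert this identity into a bound for the best approximation $\delta(u,V_{H,h})$, which can then be inserted into Corollary \ref{Error_Superclose_Theorem}.

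First I would estimate $\|u-\widehat u_h^{(\ell+1)}\|_a$. Using the elementary inequality $\|\mathcal P_h Tg\|_a\le \tfrac{1}{\sqrt{\lambda_1}}\|g\|_b$ (which follows from $a(\mathcal P_hTg,\mathcal P_hTg)=b(g,\mathcal P_hTg)$ and the Rayleigh bound (\ref{Smallest_Eigenvalue})), the projection identity and hypothesis (\ref{Condition_1}) yield
\begin{equation*}
\|\mathcal P_h u-\widehat u_h^{(\ell+1)}\|_a\le \frac{1}{\sqrt{\lambda_1}}\big\|\lambda u-\lambda_h^{(\ell)}u_h^{(\ell)}\big\|_b\le \frac{\bar C_\lambda}{\sqrt{\lambda_1}}\eta_a(V_H)\,\|u-u_h^{(\ell)}\|_a,
\end{equation*}
so that $\|u-\widehat u_h^{(\ell+1)}\|_a\le \|u-\mathcal P_h u\|_a+\tfrac{\bar C_\lambda}{\sqrt{\lambda_1}}\eta_a(V_H)\|u-u_h^{(\ell)}\|_a$ by the triangle inequality. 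Next I would absorb the approximate solve: the contraction property (\ref{Contraction_Rate}) together with $\|\widehat u_h^{(\ell+1)}-u_h^{(\ell)}\|_a\le\|u-\widehat u_h^{(\ell+1)}\|_a+\|u-u_h^{(\ell)}\|_a$ gives $\|\widehat u_h^{(\ell+1)}-\widetilde u_h^{(\ell+1)}\|_a\le\theta\big(\|u-\widehat u_h^{(\ell+1)}\|_a+\|u-u_h^{(\ell)}\|_a\big)$. Combining the two bounds through the triangle inequality produces
\begin{equation*}
\|u-\widetilde u_h^{(\ell+1)}\|_a\le \Big(\theta+(1+\theta)\tfrac{\bar C_\lambda}{\sqrt{\lambda_1}}\eta_a(V_H)\Big)\|u-u_h^{(\ell)}\|_a+(1+\theta)\|u-\mathcal P_hu\|_a.
\end{equation*}
Since $\widetilde u_h^{(\ell+1)}\in V_{H,h}$, the left-hand side bounds $\delta(u,V_{H,h})$.

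It then remains to invoke Corollary \ref{Error_Superclose_Theorem} on $V_{H,h}$. Because $V_H\subseteq V_{H,h}$, monotonicity of the infimum gives $\eta_a(V_{H,h})\le\eta_a(V_H)$ and, by the min--max principle, $\bar\lambda_{i,V_{H,h}}\le\bar\lambda_{i,H}$; together with the uniform gap bound $\delta_{\lambda}$ this lets me replace every $V_{H,h}$-quantity appearing in the corollary's constants by its $V_H$-counterpart, so that the superclose constants are dominated by $\bar C_\lambda$ and $\bar D_\lambda$ of (\ref{Definition_C_Bar})--(\ref{Definition_D_Lambda_Bar}). Estimate (\ref{Err_Norm_1_Superclose}) then reads $\|u-u_h^{(\ell+1)}\|_a\le\frac{1}{1-\bar D_\lambda\eta_a(V_H)}\delta(u,V_{H,h})$; inserting the bound on $\delta(u,V_{H,h})$ from the previous step reproduces (\ref{Estimate_h_k_1_a}) with $\gamma$ and $\zeta$ exactly as in (\ref{Gamma_Definition})--(\ref{definition_Zeta}). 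The $b$-norm estimate follows in the same way from (\ref{Err_Norm_0_Lambda}) applied on $V_{H,h}$, once more bounding the constant by $\bar C_\lambda$ and $\eta_a(V_{H,h})$ by $\eta_a(V_H)$; note that this conclusion has the same structure as hypothesis (\ref{Condition_1}), which is precisely what makes the estimate self-reproducing and hence usable in the multilevel recursion.

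The step I expect to be the main obstacle is the legitimacy of applying Corollary \ref{Error_Superclose_Theorem} to $V_{H,h}$, i.e. verifying its standing hypothesis that the computed discrete eigenvalue $\bar\mu_{i,V_{H,h}}=1/\lambda_h^{(\ell+1)}$ is the one closest to $\mu=1/\lambda$. This is exactly what the selection rule in Step 2 of Algorithm \ref{one correction step_Eig}---choosing the eigenfunction with the largest component along $\widetilde u_h^{(\ell+1)}$---is meant to guarantee, but making it rigorous requires knowing a priori that $\widetilde u_h^{(\ell+1)}$ is already close enough to $u$ (relative to the gap $\delta_\lambda$) that the target eigendirection is correctly identified and the spectral projector $E_{i,V_{H,h}}$ isolates the right eigenpair. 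Controlling this identification, in tandem with the uniform gap assumption, is the delicate point; once it is secured, the remaining manipulations are the routine triangle-inequality and projection bookkeeping sketched above.
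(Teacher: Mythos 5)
Your proposal follows essentially the same route as the paper's own proof: the dual-norm bound $\|\mathcal P_h u-\widehat u_h^{(\ell+1)}\|_a\le\lambda_1^{-1/2}\|\lambda u-\lambda_h^{(\ell)}u_h^{(\ell)}\|_b$ (your operator-$T$ identity is exactly the paper's choice $w=\mathcal P_hu-\widehat u_h^{(\ell+1)}$ in the Galerkin orthogonality relation), the triangle-inequality absorption of the inexact-solve contraction $\theta$, and then Corollary \ref{Error_Superclose_Theorem} applied on the augmented space $V_{H,h}$ together with $\eta_a(V_{H,h})\le\eta_a(V_H)$, reproducing $\gamma$ and $\zeta$ with identical constants. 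The eigenpair-identification issue you flag at the end is passed over silently in the paper, which simply regards (\ref{parallel_correct_eig_exact}) as a subspace approximation of (\ref{weak_eigenvalue_problem}) and invokes the corollary, so your caution there is, if anything, more careful than the published argument.
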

\begin{proof}
From \eqref{Smallest_Eigenvalue}, (\ref{weak_eigenvalue_problem}), (\ref{Definition_FEM_Projection}),
(\ref{correct_source_exact_para}) and (\ref{Condition_1}), the following estimate holds for any $w\in V_h$,
\begin{eqnarray}\label{One_Correction_1}
&&a(\mathcal P_hu-\widehat{u}_h^{(\ell+1)}, w)
=b\big((\lambda u-\lambda_h^{(\ell)}u_h^{(\ell)}),w\big)
\leq\|\lambda u-\lambda_h^{(\ell)}u_h^{(\ell)}\|_b\|w\|_b\nonumber\\
&&\leq \bar C_\lambda\eta_a(V_H)\| u-u_h^{(\ell)}\|_a \|w\|_b
\leq\frac{1}{\sqrt{\lambda_1}}\bar C_\lambda\eta_a(V_H)\|u-u_h^{(\ell)}\|_a \|w\|_a.
\end{eqnarray}
Taking $w = \mathcal P_hu-\widehat{u}_h^{(\ell+1)}$ in (\ref{One_Correction_1}) implies the following estimate
\begin{eqnarray}\label{One_Correction_2}
\|\mathcal P_hu-\widehat{u}_h^{(\ell+1)}\|_a &\leq&\frac{\bar C_\lambda}{\sqrt{\lambda_1}}\eta_a(V_H)\| u-u_h^{(\ell)}\|_a.
\end{eqnarray}
Combining \eqref{Contraction_Rate} with \eqref{One_Correction_2}, it follows that
\begin{eqnarray}\label{Error_3}
\|\mathcal P_hu-\widetilde{u}_h^{(\ell+1)}\|_a&\leq& \|\mathcal P_hu-\widehat{u}_h^{(\ell+1)}\|_a
+ \|\widetilde{u}_h^{(\ell+1)}-\widehat{u}_h^{(\ell+1)}\|_a\nonumber\\
&\leq& \|\mathcal P_hu-\widehat{u}_h^{(\ell+1)}\|_a  + \theta \|\widehat{u}_h^{(\ell+1)}-u_h^{(\ell)}\|_a\nonumber\\
&\leq& \|\mathcal P_hu-\widehat{u}_h^{(\ell+1)}\|_a +
\theta \|\widehat{u}_h^{(\ell+1)}- \mathcal P_hu\|_a
+\theta\|\mathcal P_hu -u_h^{(\ell)}\|_a\nonumber\\
&\leq& (1+\theta)\|\mathcal P_hu-\widehat{u}_h^{(\ell+1)}\|_a
+\theta\|\mathcal P_hu - u\|_a +\theta\|u-u_h^{(\ell)}\|_a\nonumber\\
&\leq& \Big(\theta+(1+\theta)\frac{\bar C_\lambda}{\sqrt{\lambda_1}}\eta_a(V_H)\Big)\|u-u_h^{(\ell)}\|_a
+\theta\|u-\mathcal P_hu\|_a.
\end{eqnarray}
Similarly,
the discrete eigenvalue problem \eqref{parallel_correct_eig_exact} can be regarded as a subspace approximation to
the eigenvalue problem (\ref{weak_eigenvalue_problem}). Thus, from (\ref{Err_Norm_1_Superclose}), (\ref{Error_3}), 
Lemma  \ref{Error_Estimate_Theorem} and Corollary \ref{Error_Superclose_Theorem}, there hold following error estimates
\begin{eqnarray*}\label{Error_u_u_h_2}
\|u-u_h^{(\ell+1)}\|_a &\leq& \frac{1}{1-\bar D_\lambda \eta_a(V_{H,h})}\inf_{v_{H,h_k}\in V_{H,h}}\|u-v_{H,h_k}\|_a\nonumber\\
&\leq& \frac{1}{1-\bar D_\lambda \eta_a(V_H)}\|u-\widetilde{u}_h^{(\ell+1)}\|_a \nonumber\\
&\leq&  \frac{1}{1-\bar D_\lambda \eta_a(V_H)} \big(\|u-\mathcal P_hu\|_a + \|\mathcal P_hu-\widetilde{u}_h^{(\ell+1)}\|_a \big)\nonumber\\
&\leq& \gamma\ \|u-u_h^{(\ell)}\|_a + \zeta\ \|u-\mathcal P_hu\|_a,
\end{eqnarray*}
and
\begin{eqnarray*}\label{Error_u_u_h_2_Negative}
\|\lambda u-\lambda_h^{(\ell+1)} u_h^{(\ell+1)}\|_b
\leq \bar C_\lambda\eta(V_{H,h_k})\|u-u_h^{(\ell+1)}\|_a
\leq \bar C_\lambda\eta(V_H)\|u_h-u_h^{(\ell+1)}\|_a.
\end{eqnarray*}
Thus the desired results (\ref{Estimate_h_k_1_a}) and (\ref{Estimate_h_k_1_b}) are obtained
and the proof is complete.
\end{proof}
Eventhough there is no nested sequence of meshes, some efficient numerical algorithms
such as algebraic multigrid (AMG) method can be adopted as the linear solver for (\ref{correct_source_exact_para}).
\begin{corollary}\label{Error_Estimate_Corollary}
Under the conditions of Theorem \ref{Error_Estimate_One_Smoothing_Theorem},
after executing $L$ augmented subspace iteration step defined by Algorithm \ref{one correction step_Eig},
the resultant eigenpair approximation $(\lambda_h^{(L)},u_h^{(L)})\in\mathbb R\times V_h$ has following error estimates
\begin{eqnarray}
\|u - u_h^{(L)}\|_a &\leq & \gamma^L\|u-u_h^{(0)}\|_a + \frac{1-\gamma^L}{1-\gamma} \zeta \|u-\mathcal P_hu\|_a,\label{Estimate_h_L}\\
\|\lambda u-\lambda_h^{(L)}u_h^{(L)}\|_b&\leq&
\bar C_\lambda \eta_a(V_H)\|u - u_h^{(L)}\|_a.\label{Estimate_h_L_L2}
\end{eqnarray}
\end{corollary}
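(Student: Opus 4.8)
The plan is to derive both estimates by a single induction on the iteration index, using Theorem \ref{Error_Estimate_One_Smoothing_Theorem} as the one-step engine. The key structural observation is that the two conclusions of that theorem interlock into a self-sustaining loop: the second conclusion of Theorem \ref{Error_Estimate_One_Smoothing_Theorem} has \emph{exactly} the form of its own hypothesis (\ref{Condition_1}), with the \emph{same} constant $\bar C_\lambda\eta_a(V_H)$. Hence, once we know that the input pair $(\lambda_h^{(0)},u_h^{(0)})$ satisfies (\ref{Condition_1}) — which is part of the standing assumptions carried over from Theorem \ref{Error_Estimate_One_Smoothing_Theorem} — the theorem may be applied at level $\ell=0$, and its output $(\lambda_h^{(1)},u_h^{(1)})$ again satisfies (\ref{Condition_1}); repeating, every intermediate pair $(\lambda_h^{(\ell)},u_h^{(\ell)})$ qualifies as a legitimate input for the next call of Algorithm \ref{one correction step_Eig}. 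This is what licenses invoking Theorem \ref{Error_Estimate_One_Smoothing_Theorem} at each of the $L$ steps.

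Granting this, the energy estimate is a routine unrolling of a linear recursion. Writing $e_\ell:=\|u-u_h^{(\ell)}\|_a$ and abbreviating the fixed, $\ell$-independent quantity $c:=\|u-\mathcal P_hu\|_a$, the one-step bound (\ref{Estimate_h_k_1_a}) reads $e_{\ell+1}\le\gamma e_\ell+\zeta c$. Iterating this inhomogeneous recursion from $\ell=L-1$ down to $\ell=0$ gives
\begin{equation*}
e_L\le \gamma^L e_0+\zeta c\sum_{j=0}^{L-1}\gamma^j
=\gamma^L e_0+\zeta c\,\frac{1-\gamma^L}{1-\gamma},
\end{equation*}
which is exactly (\ref{Estimate_h_L}); here the geometric sum is valid because $\gamma$ is a fixed constant. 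The $L^2$-type bound (\ref{Estimate_h_L_L2}) then requires no additional argument: it is nothing but the instance with iteration index $L$ of the second estimate in Theorem \ref{Error_Estimate_One_Smoothing_Theorem}, whose applicability at step $L$ was already secured by the hypothesis-propagation described above.

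The step I expect to be the main obstacle — the only part carrying genuine content — is justifying that hypothesis propagation cleanly, i.e. that (\ref{Condition_1}) and the constants $\bar C_\lambda$, $\bar D_\lambda$, $\gamma$, $\zeta$ are \emph{uniform} in $\ell$ rather than merely valid at the initial level. The point is that these constants depend only on the coarse space $V_H$ (through $\eta_a(V_H)$ and $\bar\lambda_{i,H}$), on the smoother contraction factor $\theta$, and on the fixed exact eigenpair $(\lambda,u)$, but never on the iteration index; consequently the inductive step always uses identical constants, so the loop truly closes. One should also keep track of the tacit identification by which each corrected pair is regarded as an element of $\mathbb R\times V_h$ (as asserted in Theorem \ref{Error_Estimate_One_Smoothing_Theorem}), so that $\mathcal P_hu$ remains the appropriate best-approximation reference at every level. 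Once the uniformity of the constants and the repeated validity of (\ref{Condition_1}) are established, the induction terminates immediately and both displayed estimates of the corollary follow.
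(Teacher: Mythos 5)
Your proposal is correct and follows essentially the same route as the paper: the paper likewise unrolls the one-step estimate (\ref{Estimate_h_k_1_a}) of Theorem \ref{Error_Estimate_One_Smoothing_Theorem} recursively into the geometric sum $\frac{1-\gamma^L}{1-\gamma}\zeta\|u-\mathcal P_hu\|_a$ for (\ref{Estimate_h_L}), and obtains (\ref{Estimate_h_L_L2}) from the one-step $L^2$-type bound. Your explicit observation that the output estimate (\ref{Estimate_h_k_1_b}) reproduces the hypothesis (\ref{Condition_1}) at the next iteration with the same $\ell$-independent constants --- which the paper leaves tacit behind the phrase ``recursive argument'' --- is precisely the justification needed to apply the theorem at each of the $L$ steps, so if anything your write-up is slightly more complete than the paper's.
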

\begin{proof}
According to (\ref{Estimate_h_k_1_a}) and recursive argument, it follows that
\begin{eqnarray*}
\|u-u_h^{(L)}\|_a &\leq& \gamma \|u-u_h^{(L-1)}\|_a  +\zeta \|u-\mathcal P_hu\|_a\nonumber\\
&\leq& \gamma\big( \gamma  \|u-u_h^{(L-2)}\|_a+\zeta \|u-\mathcal P_h\|_a\big)   + \zeta \|u-\mathcal P_hu\|_a\nonumber\\
&\leq& \gamma^L\|u-u_h^{(0)}\|_a + \sum_{\ell=0}^L\gamma^\ell \zeta \|u-\mathcal P_hu\|_a\nonumber\\
&=&\gamma^L\|u-u_h^{(0)}\|_a + \frac{1-\gamma^L}{1-\gamma} \zeta \|u-\mathcal P_hu\|_a,
\end{eqnarray*}
which proves the inequality (\ref{Estimate_h_L}).
Similarly to the proof of Theorem \ref{Error_Estimate_One_Smoothing_Theorem}, the desired result (\ref{Estimate_h_L_L2})
can also be deduced.
\end{proof}
From the convergence results of Theorem \ref{Error_Estimate_One_Smoothing_Theorem} and the definition (\ref{Gamma_Definition}),
it is easy to know that $\gamma$ is less than $1$ and independent of the finer mesh size $h$ when $H$ is sufficiently small.
\revise{\begin{remark}
The eigenpair solution $(\lambda_h^{(\ell+1)},u_h^{(\ell+1)})$ of (\ref{parallel_correct_eig_exact}) is an
algebraic approximation to the following eigenvalue problem:
Find $(\bar{\lambda}_{H,h}, \bar{u}_{H,h})\in \mathcal{R}\times (V_H+V_h\backslash V_H)$
such that $a(\bar{u}_{H,h},\bar{u}_{H,h})=1$ and
\begin{eqnarray}\label{Weak_Eigenvalue_Discrete_Hh}
a(\bar{u}_{H,h},v_{H,h})=\bar{\lambda}_{H,h} b(\bar{u}_{H,h},v_{H,h}),\quad\ \  \ \forall\ v_{H,h}\in V_{H,h},
\end{eqnarray}
where $V_h\backslash V_H$ denotes the space which are produced by deleting the components in $V_H$ from $V_h$.
\end{remark}}

In Algorithm \ref{one correction step_Eig}, the space $V_{H,h}$ is defined based on $V_H$ on the coarse mesh and $V_h$ on the finer mesh.
Since $V_H\not\subset V_h$, the augmented subspace method defined by Algorithm \ref{one correction step_Eig} can be combined
with the moving mesh method where the sequence of meshes does not have nested property \cite{DiLiTangZhang,LiTangZhang_1,LiTangZhang_2,Miller_2,Miller_1}.
This is the most important contribution of this paper.
Because of $V_H\not\subset V_h$, the definition of the interpolation operator $I_H^h: V_H\rightarrow V_h$ is
different from the standard one which is defined on the nested meshes $\mathcal T_H$ and $\mathcal T_h$.
For the detailed construction and implementation,
please refer to the documentation of finite element package FreeFem++ \cite{FreeFEM,FreeFEM_2}.

Now, we consider the details for solving the small scale eigenvalue problem (\ref{parallel_correct_eig_exact}).
Let $N_H$ and $\{\psi_{j,H}\}_{1\leq j\leq N_H}$ denote the dimension
and Lagrange basis functions for the coarse finite element space $V_H$.
The function in $V_{H,h}$ can be denoted by $u_{H,h}=u_H+\xi\widetilde u_h$.
Solving eigenvalue problem (\ref{parallel_correct_eig_exact}) is to obtain the
function $u_H\in V_H$ and the value $\xi\in \mathcal R$.
Let $u_H=\sum_{j=1}^{N_H}u_j\psi_{j,H}$ and define the vector $\mathbf u_H=[u_1,\cdots, u_{N_H}]^T$.
The corresponding matrix version of (\ref{parallel_correct_eig_exact}) can be defined as follows
\begin{eqnarray}\label{Eigenvalue_Problem_Hh}
\left(
\begin{array}{cc}
A_H & a_h\\
a_h^T & \alpha
\end{array}
\right)
\left(
\begin{array}{c}
\mathbf u_H \\
\xi
\end{array}
\right)
=\lambda_h
\left(
\begin{array}{cc}
B_H & b_h\\
b_h^T & \beta
\end{array}
\right)
\left(
\begin{array}{c}
\mathbf u_H \\
\xi
\end{array}
\right),
\end{eqnarray}
where $\mathbf u_H\in \mathcal R^{N_H\times 1}$ and $\xi\in\mathcal R$.

For understanding the proposed method, we introduce the assembling method for the matrices $A_H$ and $B_H$,
vectors $a_h$ and $b_h$, scalars $\alpha$ and $\beta$.

The matrix $A_H$ is defined as
\begin{eqnarray}\label{Definition_A_H}
(A_H)_{i,j} = (\mathcal A\nabla \psi_{i,H},\nabla\psi_{j,H})
= \int_{\Omega}\nabla \psi_{i,H}\cdot \mathcal A\nabla\psi_{j,H}d\Omega,\ \ \ 1\leq i,j\leq N_H.
\end{eqnarray}
In order to obtain the same precision as $V_h$, we need to
calculate the integral in (\ref{Definition_A_H}) on the finer mesh $\mathcal T_h$.
This is because we need to guarantee the accuracy for approximating the curved interfaces to reach the same
level as $\mathcal T_h$. Therefore, we use the following way
\begin{eqnarray}\label{Assembling_A_H}
(A_H)_{i,j} = \sum_{K\in\mathcal T_h}\int_K\nabla \psi_{i,H}\cdot\mathcal A\nabla\psi_{j,H}dK,\ \ \ 1\leq i,j\leq N_H.
\end{eqnarray}
Similarly, the assembling method for the mass matrix $B_H$ can be given as follows
\begin{eqnarray}\label{Assembling_B_H}
(B_H)_{i,j} = \sum_{K\in\mathcal T_h}\int_K\psi_{i,H}\psi_{j,H}dK,\ \ \ 1\leq i,j\leq N_H.
\end{eqnarray}

Now we concentrate on assembling the vector $a_h$, which is defined as follows
\begin{eqnarray}\label{Definition_a_h}
(a_h)_i = \int_{\Omega}\nabla\psi_{i,H}\cdot\mathcal A\nabla\widetilde u_hd\Omega, \ \ \ \ 1\leq i\leq N_H.
\end{eqnarray}
Since the finite element function $\widetilde u_h$ is defined on the finer mesh $\mathcal T_h$,
the assembling of  $a_h$ needs to be implemented on $\mathcal T_h$, i.e.,
\begin{eqnarray}\label{Assembling_a_h}
(a_h)_i = \sum_{K\in\mathcal T_h}\int_K\nabla \psi_{i,H}\cdot\mathcal A\nabla\widetilde u_hdK,\ \ \ 1\leq i\leq N_H.
\end{eqnarray}
Similarly, the vector $b_h$ should be assembled in the following way
\begin{eqnarray}\label{Assembling_b_h}
(b_h)_i = \sum_{K\in\mathcal T_h}\int_K \psi_{i,H} \widetilde u_hdK,\ \ \ 1\leq i\leq N_H.
\end{eqnarray}
Based on the structure of $V_{H,h}$, the scalars $\alpha$ and $\beta$ are assembled as follows
\begin{eqnarray}
&&\alpha = \int_\Omega\nabla\widetilde u_h\cdot\mathcal A\nabla\widetilde u_hd\Omega
=\sum_{K\in\mathcal T_h}\int_K\nabla\widetilde u_h\cdot\mathcal A\nabla \widetilde u_h dK,\label{Assembling_alpha}\\
&&\beta = \int_\Omega|\widetilde u_h|^2d\Omega=\sum_{K\in\mathcal T_h}\int_K|\widetilde u_h|^2dK.\label{Assembling_beta}
\end{eqnarray}

After assembling the matrices $A_H$ and $B_H$, vectors $a_h$ and $b_h$, scalars $\alpha$ and $\beta$,
some algebraic eigensolver are adopted to solve the eigenvalue problem (\ref{Eigenvalue_Problem_Hh})
to obtain $\mathbf u_H$ and $\xi$. For the next iteration, the function $u_H +\xi \widetilde u_h^{(\ell+1)}$ should be interpolated
into the finite element space $V_h$. With the help of the interpolator operator $I_H^h$, we can obtain $u_h^{(\ell+1)}$
by the following way
\begin{eqnarray}
u_h^{(\ell+1)} = I_H^hu_H +\xi \widetilde u_h^{(\ell+1)}.
\end{eqnarray}

According to the definition of Algorithm \ref{one correction step_Eig} and the detailed implementing process,
it is easy to state the estimate of computational work for the nonnested augmented subspace method.
For this aim, we denote the degree of freedom of the finite element space $V_h$ as $N_h$.
\begin{theorem}\label{Optimal_Work_Eig}
Assume solving the linear eigenvalue problem (\ref{Eigenvalue_Problem_Hh}) needs work $\mathcal{O}(M_H)$ ($M_H>N_H$), and the work for
solving (\ref{correct_source_exact_para}) is  $\mathcal{O}(N_h)$.
Then the computational work included in Algorithm \ref{one correction step_Eig} is
\begin{eqnarray}\label{Computation_Work_Estimate_Eff}
{\rm Work}=\mathcal{O}\left(N_h+ M_H\right).
\end{eqnarray}
\end{theorem}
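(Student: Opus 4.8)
The plan is to bound the cost of one execution of Algorithm~\ref{one correction step_Eig} by walking through its two stages and sorting every operation into one of the two buckets $\mathcal{O}(N_h)$ or $\mathcal{O}(M_H)$, invoking the two stated hypotheses directly. A single call to \texttt{AugSubspace} consists of exactly the fine-mesh linear solve (\ref{correct_source_exact_para}), followed by the assembly and solution of the small augmented eigenvalue problem (\ref{Eigenvalue_Problem_Hh}) and the interpolation of the result back into $V_h$; so it suffices to show that each of these contributes either $\mathcal{O}(N_h)$ or $\mathcal{O}(M_H)$ and then add.

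First I would treat Step~1. The linear boundary value problem (\ref{correct_source_exact_para}) lives on the finest mesh, and by hypothesis the cost of solving it is $\mathcal{O}(N_h)$. Because the contraction factor $\theta<1$ in (\ref{Contraction_Rate}) is independent of $h$, only a mesh-independent number of AMG sweeps is required to pass from $\widehat u_h^{(\ell+1)}$ to $\widetilde u_h^{(\ell+1)}$, so the optimal linear complexity of the multigrid solver is preserved and the whole of Step~1 stays $\mathcal{O}(N_h)$. Next, for Step~2, I would separate assembly from solution. Solving (\ref{Eigenvalue_Problem_Hh}), a generalized eigenvalue problem of order $N_H+1$, costs $\mathcal{O}(M_H)$ by assumption, and selecting the eigenpair whose eigenvector has the largest $\widetilde u_h^{(\ell+1)}$-component is a scan over the last coordinates of $N_H+1$ candidates, hence dominated by $\mathcal{O}(M_H)$. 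The entries of $A_H,B_H$ in (\ref{Assembling_A_H})--(\ref{Assembling_B_H}) and of $a_h,b_h,\alpha,\beta$ in (\ref{Assembling_a_h})--(\ref{Assembling_beta}) are all produced by a single loop over the fine elements $K\in\mathcal T_h$; since each fine element lies in the support of only $\mathcal{O}(1)$ coarse basis functions $\psi_{i,H}$, it contributes to $\mathcal{O}(1)$ matrix and vector entries, and the whole assembly is one sweep of cost $\mathcal{O}(N_h)$. Finally the interpolation $u_h^{(\ell+1)}=I_H^hu_H+\xi\widetilde u_h^{(\ell+1)}$ evaluates a coarse function at the $\mathcal{O}(N_h)$ fine degrees of freedom and forms a vector combination in $V_h$, both $\mathcal{O}(N_h)$. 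Summing the buckets gives $\mathcal{O}(N_h)+\mathcal{O}(M_H)+\mathcal{O}(N_h)=\mathcal{O}(N_h+M_H)$, which is exactly (\ref{Computation_Work_Estimate_Eff}).

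The hard part will be justifying the $\mathcal{O}(N_h)$ claim for the genuinely nonnested operations: the fine-mesh assembly of the coarse quantities $A_H,B_H,a_h,b_h$ and the application of $I_H^h$. Because $V_H\not\subset V_h$, evaluating $\psi_{i,H}$ or $\nabla\psi_{i,H}$ on a fine element $K$, and evaluating $I_H^hu_H$ at a fine node, each requires locating which coarse element contains the relevant fine point, and with a naive point search this location step is not obviously linear. The estimate therefore rests on the use of an efficient search/location structure, as provided by the FreeFem++ routines referenced after Algorithm~\ref{one correction step_Eig}, so that each point location is $\mathcal{O}(1)$ amortized and both the assembly and the interpolation remain $\mathcal{O}(N_h)$. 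I would state this search-cost assumption explicitly; once it is in place, the remainder of the count is routine bookkeeping.
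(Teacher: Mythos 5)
Your proposal is correct and follows what is essentially the paper's own (implicit) argument: the paper states Theorem \ref{Optimal_Work_Eig} without any proof, treating (\ref{Computation_Work_Estimate_Eff}) as immediate from summing the hypothesized costs $\mathcal{O}(N_h)$ for the fine-mesh solve (\ref{correct_source_exact_para}) and $\mathcal{O}(M_H)$ for the small eigenvalue problem (\ref{Eigenvalue_Problem_Hh}), with the fine-mesh assembly (\ref{Assembling_A_H})--(\ref{Assembling_beta}) and the interpolation $I_H^h$ tacitly counted as $\mathcal{O}(N_h)$ sweeps over $\mathcal{T}_h$. Your one addition --- making explicit that, because $V_H\not\subset V_h$, these nonnested operations need an $\mathcal{O}(1)$-amortized point-location structure (which the paper silently delegates to the FreeFem++ interpolation routines) for the $\mathcal{O}(N_h)$ count to hold --- is a genuine and appropriate sharpening of the hypotheses rather than a different route.
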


\section{Multilevel correction method}\label{MLCM}
Similarly to the full multigrid method for the linear boundary value problems,  we
can use the nonnested augmented subspace method defined by Algorithm \ref{one correction step_Eig} to build
a type of multilevel correction method for the eigenvalue problem (\ref{weak_eigenvalue_problem}).
Different from the existed multilevel correction method in \cite{LinXie_2010,LinXie,Xie_JCP,Xie_IMA},
the sequence of meshes has no nested property since the existence of the curved interfaces.
The idea to build the multilevel correction method is to use
the eigenpair approximations on the coarse mesh as the initial values
on the finer mesh for augmented subspace algorithm.
The reason to call the proposed method as the multilevel correction method is the sequence of
concerned finite element spaces has no nested property.

In order to design the multilevel correction method, we first introduce the
sequence of finite element spaces. We generate a coarse mesh $\mathcal T_H$ with the mesh size $H$ and the
coarse linear finite element space $V_H$ is defined on the mesh $\mathcal T_H$.
Then a sequence of meshes $\mathcal T_{h_k}$ is generated by some type of mesh tool and  the mesh sizes $h_k$
satisfy  the following properties
\begin{eqnarray}\label{mesh_size_recur}
h_1 <H,\ \ \ h_k=\frac{1}{\beta}h_{k-1},\ \ \ \ k=2, \cdots, n.
\end{eqnarray}
Based on the sequence of meshes $\mathcal T_{h_k}$,  we can construct the corresponding
linear finite element spaces $V_{h_k}$ ($k=1, \cdots, n$).
Although the sequence of spaces $V_{h_k}$ does not have nested properties,
the following relationships and error estimates hold
\begin{eqnarray}\label{delta_recur_relation}
\eta_a(V_{h_k})\approx \frac{1}{\beta}\eta_a(V_{h_{k-1}}),\ \ \ \
\delta(u, V_{h_{k-1}})\approx \frac{1}{\beta}\delta(u, V_{h_k}),\ \ \ k=2,\cdots,n.
\end{eqnarray}
The corresponding multilevel correction method is defined by Algorithm \ref{Multilevel_Correction_Eig}.
\begin{algorithm}[htb]
\caption{Multilevel correction method}\label{Multilevel_Correction_Eig}
\begin{enumerate}
\item Solve the eigenvalue problem on $V_{h_1}$: Find $(\lambda_{h_1}, u_{h_1})\in \mathbb R\times V_{h_1}$ such that
\begin{equation*}
a(u_{h_1}, v_{h_1})=\lambda_{h_1}(u_{h_1},v_{h_1}), \quad \forall\ v_{h_1}\in  V_{h_1}.
\end{equation*}
\item For $k=2,\cdots,n$, do the following iteration:
\begin{enumerate}
\item Let $u_{h_k}^{(0)}=u_{h_{k-1}}$ and $\lambda_{h_k}^{(0)}=\lambda_{h_{k-1}}$.
\item For $\ell=0, \cdots, L-1$, do the following augmented subspace iteration steps
\begin{eqnarray*}
(\lambda_{h_k}^{(\ell+1)}, u_{h_k}^{(\ell+1)})={\tt AugSubspace}(V_H,\lambda_{h_k}^{(\ell)}, u_{h_k}^{(\ell)},V_{h_k}).
\end{eqnarray*}
\item Define $u_{h_k}=u_{h_k}^{(L)}$ and $\lambda_{h_k}=u_{h_k}^{(L)}$.
\end{enumerate}
\end{enumerate}
\end{algorithm}

Based on Theorem \ref{Error_Estimate_One_Smoothing_Theorem},
Corollary \ref{Error_Estimate_Corollary} and the property (\ref{delta_recur_relation}), we can deduce
the error estimates for Algorithm \ref{Multilevel_Correction_Eig} with some recursive argument.
\begin{theorem}\label{Error_Multi_Correction_Theorem}
Under the condition of (\ref{delta_recur_relation}),
the eigenpair approximation $(\lambda_{h_n}, u_{h_n})\in\mathbb R\times V_{h_n}$
obtained by Algorithm \ref{Multilevel_Correction_Eig} has the following error estimate
\begin{eqnarray}\label{Error_Multi_Correction_a}
\|u-u_{h_n}\|_a &\leq \frac{1-(\beta\gamma^L)^n}{1-\beta\gamma^L}\mu\delta(u, V_{h_n}),
\end{eqnarray}
where
\begin{eqnarray}\label{Definition_mu}
\mu := \frac{1-\gamma^L}{1-\gamma} \zeta.
\end{eqnarray}
\end{theorem}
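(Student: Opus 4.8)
The plan is to collapse Algorithm \ref{Multilevel_Correction_Eig} into a single contraction recurrence that jumps from one level to the next, and then to unroll that recurrence by induction on the level index. Write $e_k := \|u-u_{h_k}\|_a$ and $d_k := \delta(u,V_{h_k})$ for brevity. On each level $k\geq 2$ the algorithm sets $u_{h_k}^{(0)}=u_{h_{k-1}}$, runs $L$ sweeps of {\tt AugSubspace}, and outputs $u_{h_k}=u_{h_k}^{(L)}$. Since $\|u-\mathcal P_{h_k}u\|_a = d_k$, Corollary \ref{Error_Estimate_Corollary}, estimate \eqref{Estimate_h_L}, delivers at once the master recurrence
$$ e_k \leq \gamma^L e_{k-1} + \mu\, d_k, \qquad k=2,\dots,n, $$
with $\mu=\frac{1-\gamma^L}{1-\gamma}\zeta$ as in \eqref{Definition_mu}. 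This inequality is the engine of the whole argument; everything else is bookkeeping around it.

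Before unrolling I would secure the two ingredients the recurrence needs. First, the base case: on the coarsest mesh the eigenvalue problem is solved exactly, so $u_{h_1}=\bar u_{h_1}$, and Corollary \ref{Error_Superclose_Theorem}, estimate \eqref{Err_Norm_1_Superclose}, gives $e_1 \leq \big(1-D_\lambda\eta_a(V_{h_1})\big)^{-1} d_1$. Because $h_1<H$ forces $\eta_a(V_{h_1})\leq \eta_a(V_H)$ and the constants $D_\lambda,\bar D_\lambda$ are comparable, this prefactor is dominated by $\zeta\leq\mu$, yielding the clean opening estimate $e_1\leq \mu\, d_1$. Second, to invoke Theorem \ref{Error_Estimate_One_Smoothing_Theorem} at the start of each level I must check its hypothesis \eqref{Condition_1} for the incoming iterate; for $k\geq 3$ this is precisely the $L^2$ conclusion inherited from level $k-1$, while for $k=2$ it follows from \eqref{Err_Norm_0_Lambda} applied to $\bar u_{h_1}$ together with $\eta_a(V_{h_1})\leq\eta_a(V_H)$. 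Hence the master recurrence is legitimate for every $k$.

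Next I would feed in the mesh-refinement relation \eqref{delta_recur_relation}, read in the physically correct direction so that the finer space lowers the best-approximation error by the refinement factor, i.e. $d_{k-1}\approx\beta\, d_k$ (the coarser level carries the \emph{larger} $\delta$; I would state this direction explicitly to avoid the sign ambiguity in the displayed relation). The recurrence then becomes self-similar in $d_n$, and the claim $e_k \leq \mu\, d_k\sum_{j=0}^{k-1}(\beta\gamma^L)^j$ follows by induction on $k$: the base case is the preceding paragraph, and in the inductive step I substitute the hypothesis for $e_{k-1}$, replace $d_{k-1}$ by $\beta d_k$, and collect terms,
$$ e_k \leq \gamma^L\mu\,\beta d_k\sum_{j=0}^{k-2}(\beta\gamma^L)^j + \mu\, d_k = \mu\, d_k\Big(1+\sum_{j=1}^{k-1}(\beta\gamma^L)^j\Big) = \mu\, d_k\sum_{j=0}^{k-1}(\beta\gamma^L)^j. $$
Summing the finite geometric series and setting $k=n$ produces $e_n\leq \frac{1-(\beta\gamma^L)^n}{1-\beta\gamma^L}\mu\, d_n$, which is exactly \eqref{Error_Multi_Correction_a}.

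I expect the genuinely delicate points to be the two verifications of the second paragraph rather than the algebra: pinning the coarsest-level constant below $\mu$ so the induction opens cleanly, and confirming that the hypothesis \eqref{Condition_1} propagates down the levels (it is simultaneously an assumption and a conclusion of Theorem \ref{Error_Estimate_One_Smoothing_Theorem}, so the chain closes only because the very first level already satisfies it through Corollary \ref{Error_Superclose_Theorem}). A secondary caution is the orientation of the $\delta$-ratio in \eqref{delta_recur_relation}: using $d_{k-1}\approx\beta d_k$ is precisely what generates the factor $\beta\gamma^L$ in the stated bound, and the geometric sum is a meaningful (bounded) estimate exactly when $\beta\gamma^L<1$, which holds for $L$ large enough since $\gamma<1$ for sufficiently small $H$ by \eqref{Gamma_Definition}.
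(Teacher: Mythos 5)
Your proposal is correct and takes essentially the same route as the paper: the master recurrence $\|u-u_{h_k}\|_a\le\gamma^L\|u-u_{h_{k-1}}\|_a+\mu\,\delta(u,V_{h_k})$ from Corollary \ref{Error_Estimate_Corollary}, the base estimate $\|u-u_{h_1}\|_a\le\mu\,\delta(u,V_{h_1})$ via $\eta_a(V_{h_1})\le\eta_a(V_H)$, and the geometric sum after replacing $\delta(u,V_{h_k})$ by $\beta^{n-k}\delta(u,V_{h_n})$ are exactly the paper's steps, your induction being the same computation as the paper's direct unrolling. Your two additional verifications---that hypothesis \eqref{Condition_1} propagates down the levels because it is reproduced as conclusion \eqref{Estimate_h_k_1_b} at each level, and that the $\delta$-ratio in \eqref{delta_recur_relation} must be read as $\delta(u,V_{h_{k-1}})\approx\beta\,\delta(u,V_{h_k})$---are points the paper leaves implicit (the second is evidently a typo there), and both are handled correctly.
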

\begin{proof}
From Lemma \ref{Error_Estimate_Theorem} and the property $\eta_a(V_{h_1})\leq \eta_a(V_H)$,
the eigenfunction approximation $u_{h_1}$ obtained by Step 1 of Algorithm \ref{Multilevel_Correction_Eig}
satisfies the following error estimate
\begin{eqnarray}\label{Inequality_1}
\|u-u_{h_1}\|_a &\leq& \frac{1}{1-\bar D_\lambda\eta_a(V_{h_1})}\delta(u,V_{h_1}) \leq \mu \|u-\mathcal P_{h_1}u\|_a.
\end{eqnarray}
Combining Corollary \ref{Error_Estimate_Corollary}, (\ref{Inequality_1}) and recursive argument leads to the following estimates
\begin{eqnarray*}
\|u-u_{h_n}\|_a &\leq& \gamma^L\|u-u_{h_{n-1}}\|_a + \mu \|u-\mathcal P_{h_n}u\|_a \nonumber\\
&\leq&  \gamma^L\big(\gamma^L\|u-u_{h_{n-2}}\|_a + \mu \|u-\mathcal P_{h_{n-1}}u\|_a\big) +\mu \|u-\mathcal P_{h_n}u\|_a\nonumber\\
&\leq& \gamma^{(n-1)L}\|u-u_{h_1}\|_a + \sum_{k=2}^{n}\gamma^{(n-k)L}\mu\|u-\mathcal P_{h_k}u\|_a\nonumber\\
&\leq& \gamma^{(n-1)L}\mu\|u-\mathcal P_{h_1}u\|_a + \sum_{k=2}^{n}\gamma^{(n-k)L}\mu\|u-\mathcal P_{h_k}u\|_a\nonumber\\
&=& \sum_{k=1}^{n}\gamma^{(n-k)L}\mu\delta(u, V_{h_k})
\leq \left(\sum_{k=1}^{n}\gamma^{(n-k)L}\beta^{n-k}\right)\mu\delta(u, V_{h_n})\nonumber\\
&=&\left(\sum_{k=1}^{n}\big(\beta\gamma^{L}\big)^{n-k}\right)\mu\delta(u, V_{h_n})
\leq \frac{1-\big(\beta\gamma^L\big)^n}{1-\beta\gamma^L}\mu\delta(u, V_{h_n}).
\end{eqnarray*}
This is the desired result (\ref{Error_Multi_Correction_a}) and we complete the proof.
\end{proof}

Now we turn our attention to the estimate of computational work for Algorithm \ref{Multilevel_Correction_Eig}.
First, we define the dimension of each level of finite element space as $N_{h_k}:={\rm dim}V_{h_k}$.
Then the following property holds
\begin{eqnarray}\label{DOF_Relation}
N_{h_k}\approx\Big(\frac{1}{\beta}\Big)^{d(n-k)}N_{h_n},\ \ \ k=1, 2,\cdots, n.
\end{eqnarray}

\begin{theorem}\label{Work_Estimate_Multi_Correction}
Assume the conditions of Theorem \ref{Optimal_Work_Eig} hold
and solving the eigenvalue problem in $V_{h_1}$ needs work $\mathcal{O}(M_{h_1})$.
Then the computational work involved in Algorithm \ref{Multilevel_Correction_Eig} is
\begin{eqnarray}\label{Computation_Work_Estimate_Multi_Correction}
{\rm Total\ Work}=\mathcal{O}\big(LN_{h_n}+ M_{h_1}+LM_H\ln N_{h_n}\big).
\end{eqnarray}
\end{theorem}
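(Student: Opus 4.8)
The plan is to account separately for the three distinct sources of computational cost in Algorithm \ref{Multilevel_Correction_Eig} and then sum them across the $n$ levels, using the dimension recurrence (\ref{DOF_Relation}). First I would isolate the cost of Step 1: solving the eigenvalue problem on the coarsest fine space $V_{h_1}$ is performed once and contributes exactly $\mathcal{O}(M_{h_1})$, which accounts for the isolated $M_{h_1}$ term in (\ref{Computation_Work_Estimate_Multi_Correction}). Next, for each level $k=2,\dots,n$ the inner loop performs $L$ calls to {\tt AugSubspace}, and by Theorem \ref{Optimal_Work_Eig} each such call costs $\mathcal{O}(N_{h_k}+M_H)$: the $\mathcal{O}(N_{h_k})$ piece comes from solving the linear boundary value problem (\ref{correct_source_exact_para}) on $V_{h_k}$ by a fixed number of AMG steps, and the $\mathcal{O}(M_H)$ piece comes from assembling and solving the small augmented eigenvalue problem (\ref{Eigenvalue_Problem_Hh}).

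The key step is then to sum these per-level contributions. For the $N_{h_k}$ part, I would invoke the geometric decay (\ref{DOF_Relation}), namely $N_{h_k}\approx(1/\beta)^{d(n-k)}N_{h_n}$, so that
\begin{eqnarray*}
\sum_{k=2}^{n}L\,N_{h_k}\approx L\,N_{h_n}\sum_{k=2}^{n}\Big(\frac{1}{\beta}\Big)^{d(n-k)}
\leq L\,N_{h_n}\sum_{j=0}^{\infty}\Big(\frac{1}{\beta^{d}}\Big)^{j}
= \frac{L\,N_{h_n}}{1-\beta^{-d}}=\mathcal{O}(L\,N_{h_n}),
\end{eqnarray*}
where the geometric series converges because $\beta>1$ and $d\geq 1$. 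This yields the leading $\mathcal{O}(LN_{h_n})$ term and reflects the essential feature of the method: the dominant work is proportional to that of a single linear solve on the finest mesh. For the $M_H$ part, the coarse eigenproblem is solved once per {\tt AugSubspace} call, giving $L$ solves on each of the $n-1$ finer levels, hence a total of $\mathcal{O}\big((n-1)LM_H\big)=\mathcal{O}(nLM_H)$.

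The final step is to convert the level count $n$ into the logarithmic factor appearing in (\ref{Computation_Work_Estimate_Multi_Correction}). From the dimension relation (\ref{DOF_Relation}) with $k=1$ we have $N_{h_n}\approx\beta^{d(n-1)}N_{h_1}$, so taking logarithms gives $n-1\approx \frac{1}{d\ln\beta}\ln\!\big(N_{h_n}/N_{h_1}\big)=\mathcal{O}(\ln N_{h_n})$; thus $nLM_H=\mathcal{O}(LM_H\ln N_{h_n})$. Collecting the Step-1 cost, the summed fine-level linear-solve cost, and the summed coarse-eigenproblem cost produces
\begin{eqnarray*}
{\rm Total\ Work}=\mathcal{O}\big(M_{h_1}\big)+\mathcal{O}\big(LN_{h_n}\big)+\mathcal{O}\big(LM_H\ln N_{h_n}\big),
\end{eqnarray*}
which is the claimed estimate. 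I expect the only genuinely delicate point to be justifying that $n=\mathcal{O}(\ln N_{h_n})$ and that the geometric sum of the $N_{h_k}$ telescopes to $\mathcal{O}(N_{h_n})$ rather than inflating by a factor of $n$; both rest on the uniform refinement ratio $\beta>1$ encoded in (\ref{mesh_size_recur}) and (\ref{DOF_Relation}), so I would state the standing assumption $\beta>1$ explicitly before carrying out the summation.
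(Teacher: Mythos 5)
Your proposal is correct and follows essentially the same route as the paper: decompose the cost into the one-time $\mathcal{O}(M_{h_1})$ solve, the per-level cost $\mathcal{O}\big(L(N_{h_k}+M_H)\big)$ from Theorem \ref{Optimal_Work_Eig}, sum the $N_{h_k}$ terms geometrically via (\ref{DOF_Relation}) to get $\mathcal{O}(LN_{h_n})$, and convert the level count $n$ into $\mathcal{O}(\ln N_{h_n})$. If anything, your treatment is slightly more careful than the paper's, which writes the decay factor as $(1/\beta)^{n-k}$ rather than $(1/\beta)^{d(n-k)}$ in the intermediate step and leaves the $n=\mathcal{O}(\ln N_{h_n})$ conversion implicit.
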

\begin{proof}
From Theorem \ref{Optimal_Work_Eig} and (\ref{DOF_Relation}), it follows that
\begin{eqnarray*}
{\rm Total\ Work} &=&\mathcal O\left( M_{h_1} + \sum_{k=2}^n \big(L(N_{h_k}+M_H)\big)\right)\nonumber\\
&=&\mathcal O\left( M_{h_1} + L\sum_{k=2}^n\left(\Big(\frac{1}{\beta}\Big)^{n-k}N_{h_n}+M_H\right)\right)\nonumber\\
&=& \mathcal O\big( LN_{h_n}+M_{h_1}+LM_H\ln N_{h_n}\big).
\end{eqnarray*}
Thus the proof is complete.
\end{proof}

Based on the definition and the corresponding convergence theory,
we can find an interesting property that Algorithm \ref{one correction step_Eig} can work for only one single eigenpair.
During the multilevel correction process, there is no orthogonalization in the high dimension space $V_{h_k}$ with $k\geq 2$.
Because of avoiding doing the time-consuming orthogonalization in the high dimensional spaces,
the augmented subspace iteration algorithm improves the scalability for solving the eigenvalue problem.
Compared with the traditional eigensolvers based on the Krylov subspaces, the coarse space $V_H$ from
the augmented subspace $V_{H,h}$ has the approximation property to general functions (check the definition of $\eta_a(V_H)$ in (\ref{Definition_Eta_a_h})). 
This is obviously different from the property of the Krylov subspaces
which can only approximate the specific functions \cite{Saad}.
This is the reason why Algorithm \ref{one correction step_Eig} can compute one particular eigenpair approximation \cite{XuXieZhang}.

\section{Numerical examples}\label{NE}

In this section, we provide four numerical examples to validate the proposed augmented subspace algorithm and the corresponding
theoretical analysis. 
\revise{Since the software FreeFEM++ offers a fast interpolation algorithm and a language
to manipulate the data on multiple meshes, the methods in this paper is implemented with FreeFEM++ \cite{FreeFEM,FreeFEM_2}}.
With the help of finite element package FreeFem++\cite{FreeFEM,FreeFEM_2}, the numerical experiments are carried out on LSSC-IV in the State Key
Laboratory of Scientific and Engineering Computing, Academy of Mathematics and Systems Science, Chinese Academy of Sciences.
Each computing node has two $18$-core Intel Xeon Gold $6140$ processors at $2.3$ GHz and $192$ GB memory.
The linear equation (\ref{correct_source_exact_para}) in Algorithm \ref{one correction step_Eig}
is solved by the package PETSc \cite{petsc-web-page,petsc-user-ref,petsc-efficient} with the aggregation-based AMG from Hypre (BoomerAMG) \cite{hypre}.
Each AMG step includes $5$ V-cycle with Falgout coarsening scheme,
one hybrid smoother from \revise{Symmetric Gauss Seidel} and Jacobi iterations.
The eigenvalue problem (\ref{parallel_correct_eig_exact}) is solved by the Krylov-Schur algorithm from Slepc \cite{SLEPc}.
\revise{Here, the eigenpair approximation $(\bar\lambda_h,\bar u_h)$ of (\ref{Weak_Eigenvalue_Discrete}) is chosen as
the exact one eigenpair to measure the errors of the approximations by the proposed algorithms.}

\subsection{Two dimensional examples}
In the first subsection, we investigate the convergence and efficiency of Algorithms \ref{one correction step_Eig}
and \ref{Multilevel_Correction_Eig} for two dimensional eigenvalue problems.

\subsubsection*{Example 1}
In the first example, we consider the elliptic eigenvalue problem with a piecewise constant coefficient and the computing
domain has two circle interfaces.
The nonnested augmented subspace method defined by Algorithm \ref{one correction step_Eig} is adopted to solve the following
eigenvalue problem: Find $(\lambda, u)$ such that
\begin{eqnarray}\label{Eigenvalue_Problem_2d}
\left\{
\begin{array}{rcl}
-\nabla\cdot(\mathcal K\nabla u) &=&\lambda u,\ \ \  \textrm{in}\ \Omega,\\
{[}u{]}=0, \ \ {\big[}\mathbf n_\Gamma \mathcal A\nabla u{\big]}&=&0, \ \ \ \quad \text{on}\  \Gamma,\\
u&=&0,\ \ \ \ \ \textrm{on}\ \partial\Omega.
\end{array}
\right.
\end{eqnarray}
Here, the computing domain $\Omega = (0,2)\times (0,2)$ includes two circles $\Omega_1$ and $\Omega_2$ with radius size $0.5$
and centers $(2/3,1)$ and $(4/3,1)$, respectively. The coefficient $\mathcal K$ in (\ref{Eigenvalue_Problem_2d}) is defined as
follows
\begin{eqnarray}
\mathcal K = \left\{
\begin{array}{ll}
10, & \textrm{in}\  \Omega_1=\{(x,y)\in\mathbb R^2| (x-2/3)^2+(y-1
)^2\leq 1/9\},\\
10, & \textrm{in}\  \Omega_2=\{(x,y)\in\mathbb R^2| (x-4/3)^2+(y-1)^2\leq 1/9\},\\
1,  & \textrm{in}\  \Omega_3 = \Omega/(\bar\Omega_1\cup\bar\Omega_2).
\end{array}
\right.
\end{eqnarray}

In order to check the effect of the coarse mesh $\mathcal T_H$ on the convergence rate, which is shown
in Theorems \ref{Error_Estimate_One_Smoothing_Theorem} and \ref{Error_Multi_Correction_Theorem},
Corollary \ref{Error_Estimate_Corollary}, we select two coarse meshes shown in Figure \ref{Exam_1_Coarse_Meshes} for the test.
For comparison, the finest mesh is chosen with the same $364416$ elements for the two cases of coarse meshes.
\begin{figure}[!hbt]
\centering
\includegraphics[width=7cm,height=5cm]{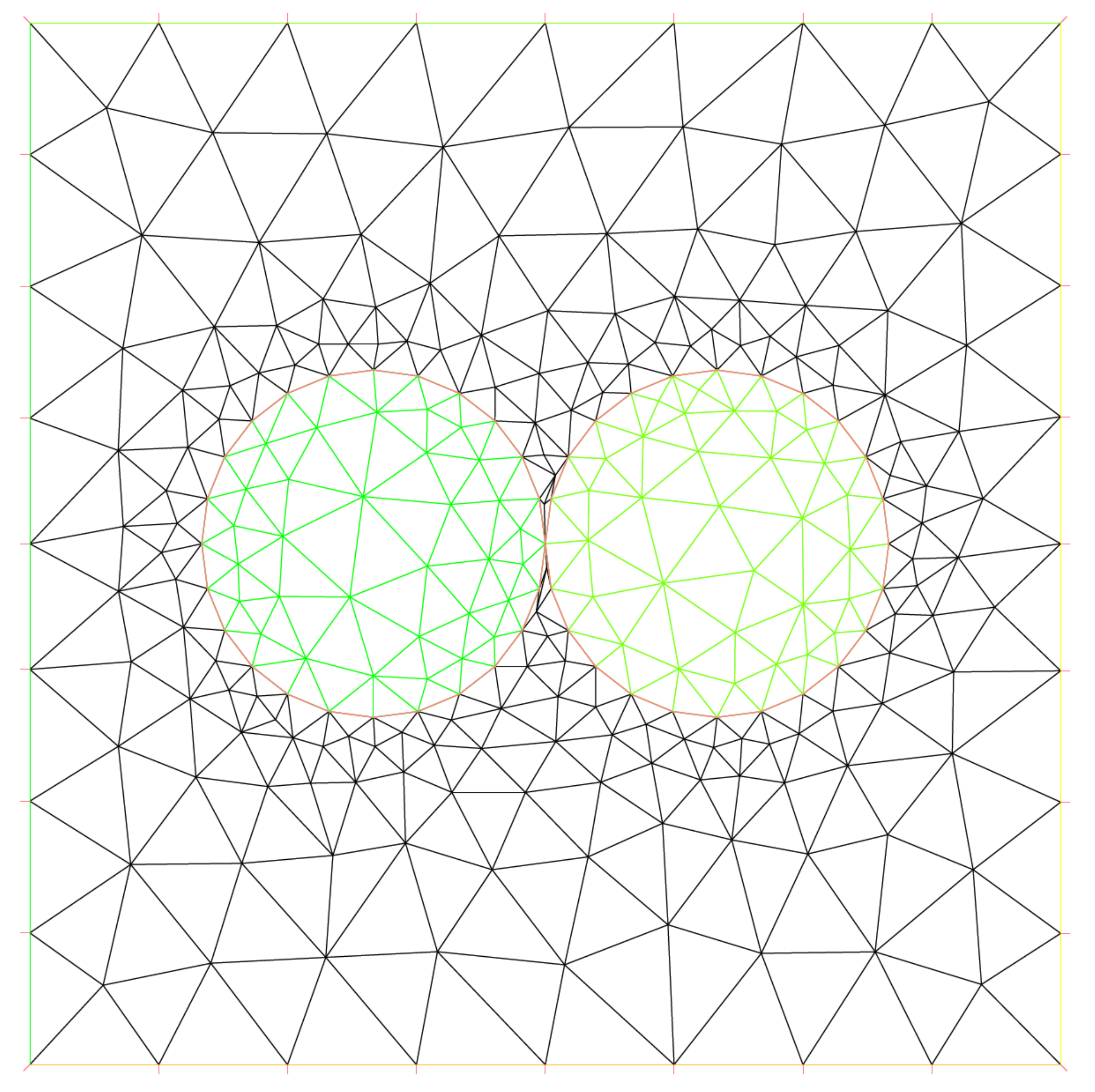}
\hskip-0cm
\includegraphics[width=7cm,height=5cm]{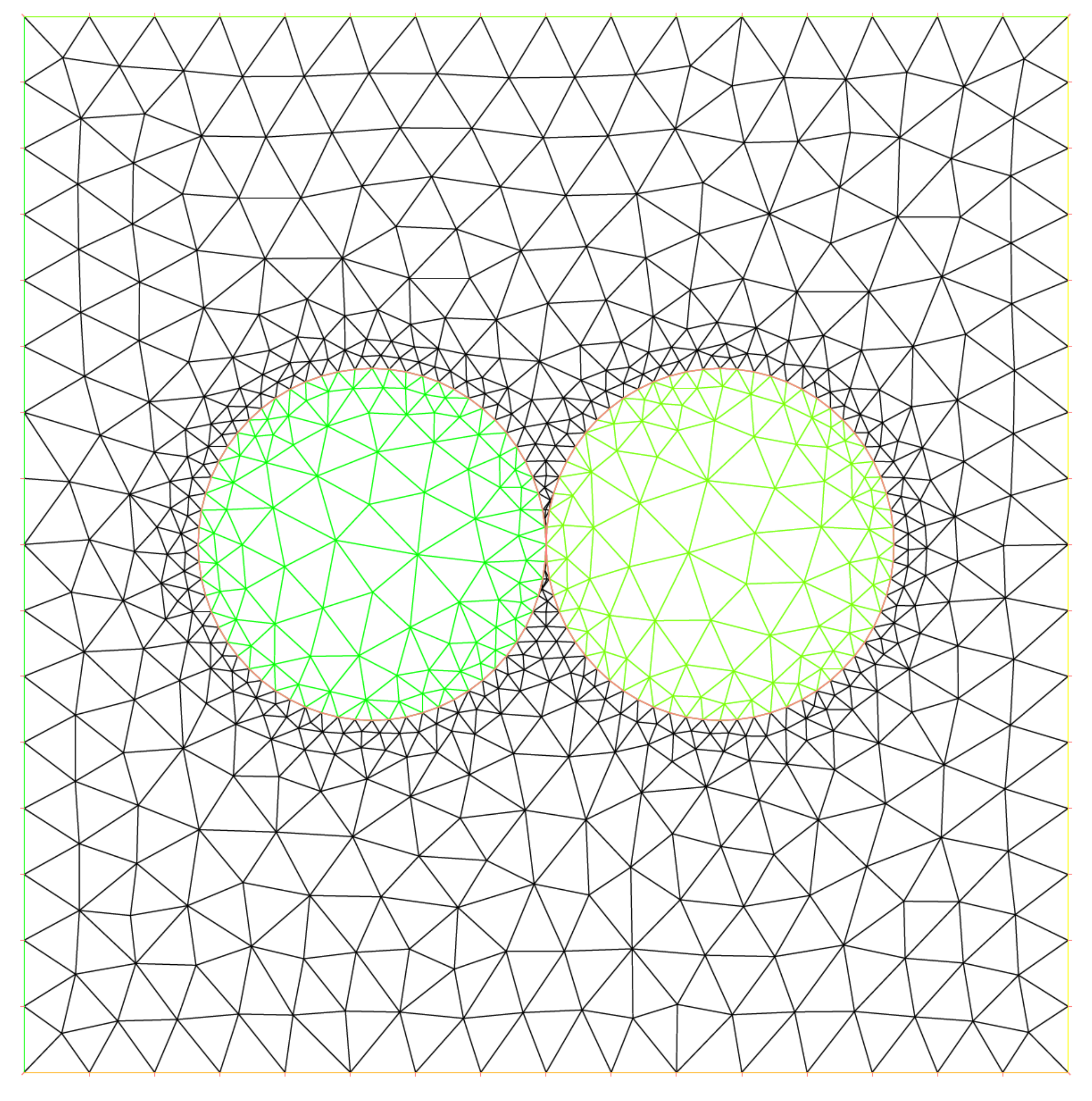}
\caption{Two coarse meshes $\mathcal T_H$ for Example 1: The left coarse mesh consists of $550$ elements, and the right
one $1456$ elements.}\label{Exam_1_Coarse_Meshes}
\end{figure}

Here, we check the numerical results for the first $4$ eigenfunctions and $10$ eigenvalues.
Since the second and third exact eigenvalues are multiple, we need to do the following spectral projection for the eigenfunction approximations $u_{2,h}$ and $u_{3,h}$ as follows:
\begin{eqnarray*}
a(E_{2,3}w,v_h) = a(w,v_h),\ \ \ \ \forall v_h\in {\rm span}\{\bar u_{1,h}, \bar u_{2,h}\}.
\end{eqnarray*}
Then the error estimate for  the first $4$ eigenfunction approximations can be defined as
\begin{eqnarray*}
\|u_{1,h}-\bar u_{1,h}\|_a +\|u_{2,h}-E_{2,3}u_{2,h}\|_a+\|u_{3,h}-E_{2,3}u_{3,h}\|_a+\|u_{4,h}-\bar u_{4,h}\|_a,
\end{eqnarray*}
where $\bar u_{i,h}$ ($1\leq i\leq 4$) denote the first $4$ exact finite element eigenfunctions defined
on the corresponding finer mesh $\mathcal T_h$.

When the coarse mesh is chosen as the left on in Figure \ref{Exam_1_Coarse_Meshes},
the corresponding numerical results are shown in Figure \ref{Exam_1_Error_H8}.
\begin{figure}[!hbt]
\centering
\includegraphics[width=7cm,height=5cm]{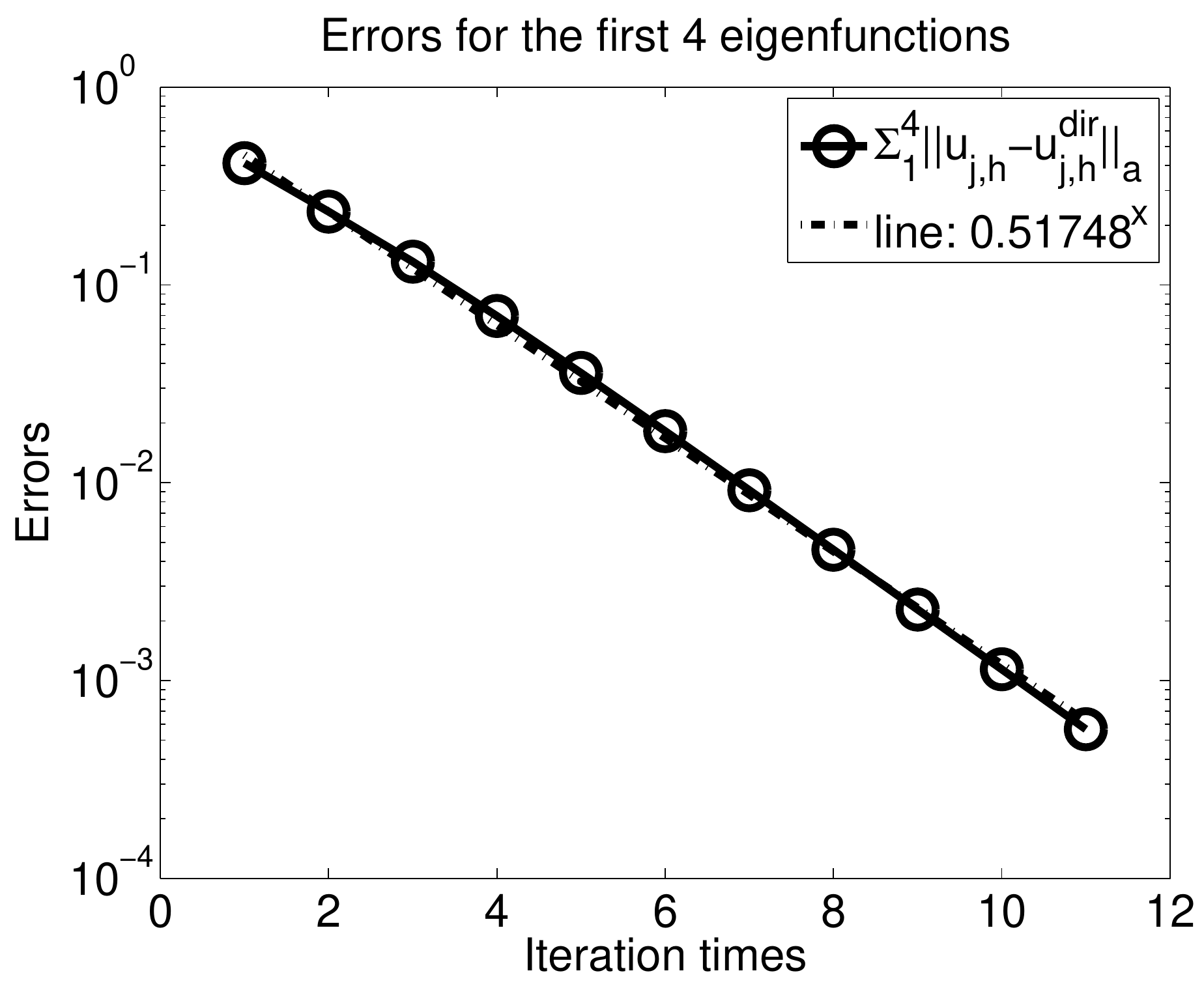}
\includegraphics[width=7cm,height=5cm]{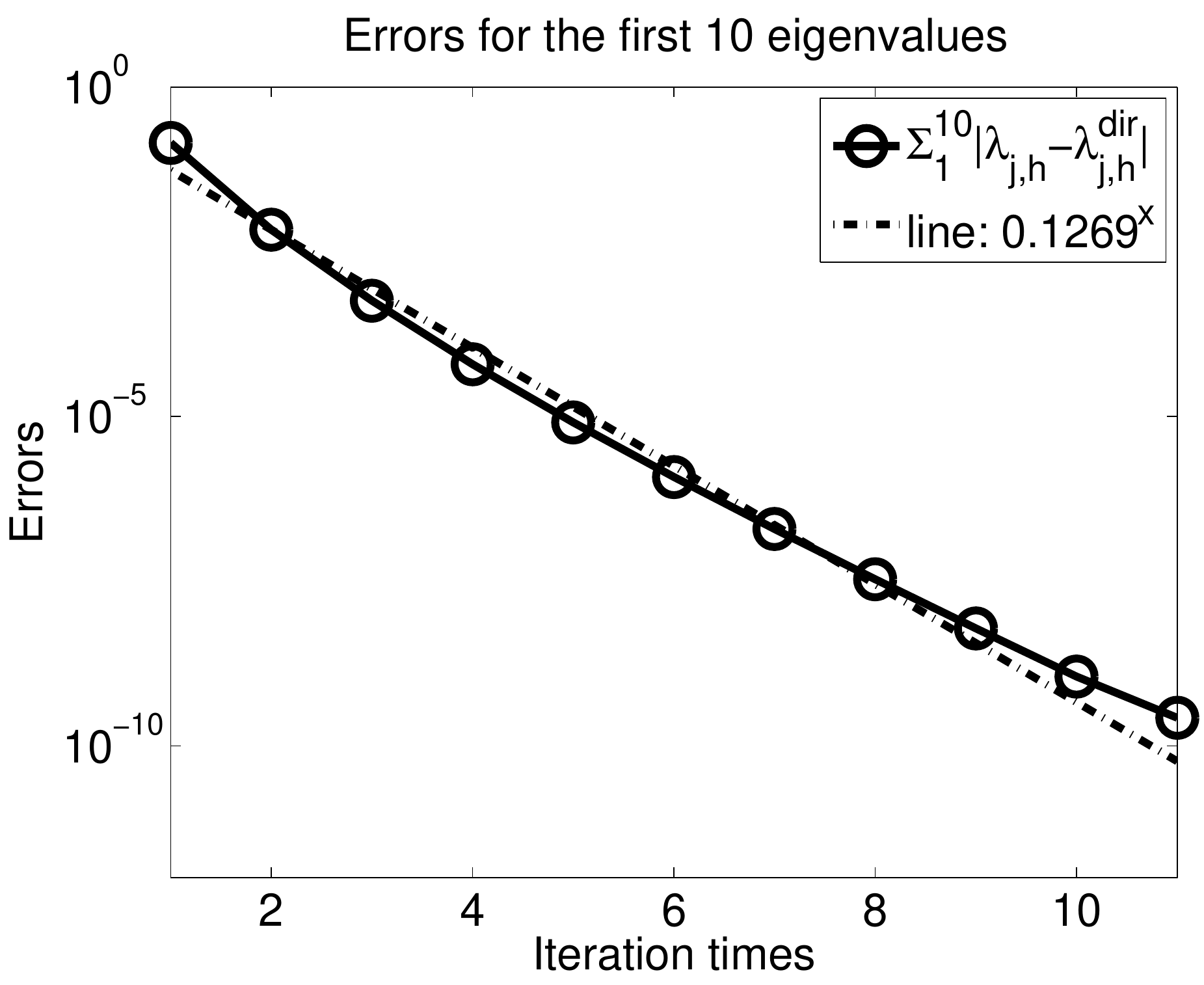}
\caption{Error estimates for the first $4$ eigenfunction and $10$ eigenvalue approximations by
Algorithm \ref{Multilevel_Correction_Eig}. Here the coarse mesh is chosen as the left one in Figure \ref{Exam_1_Coarse_Meshes}.}\label{Exam_1_Error_H8}
\end{figure}
Figure \ref{Exam_1_Error_H16} presents the corresponding numerical results for the coarse mesh is chosen as the right one in
Figure \ref{Exam_1_Coarse_Meshes}.
\begin{figure}[!hbt]
\centering
\includegraphics[width=7cm,height=5cm]{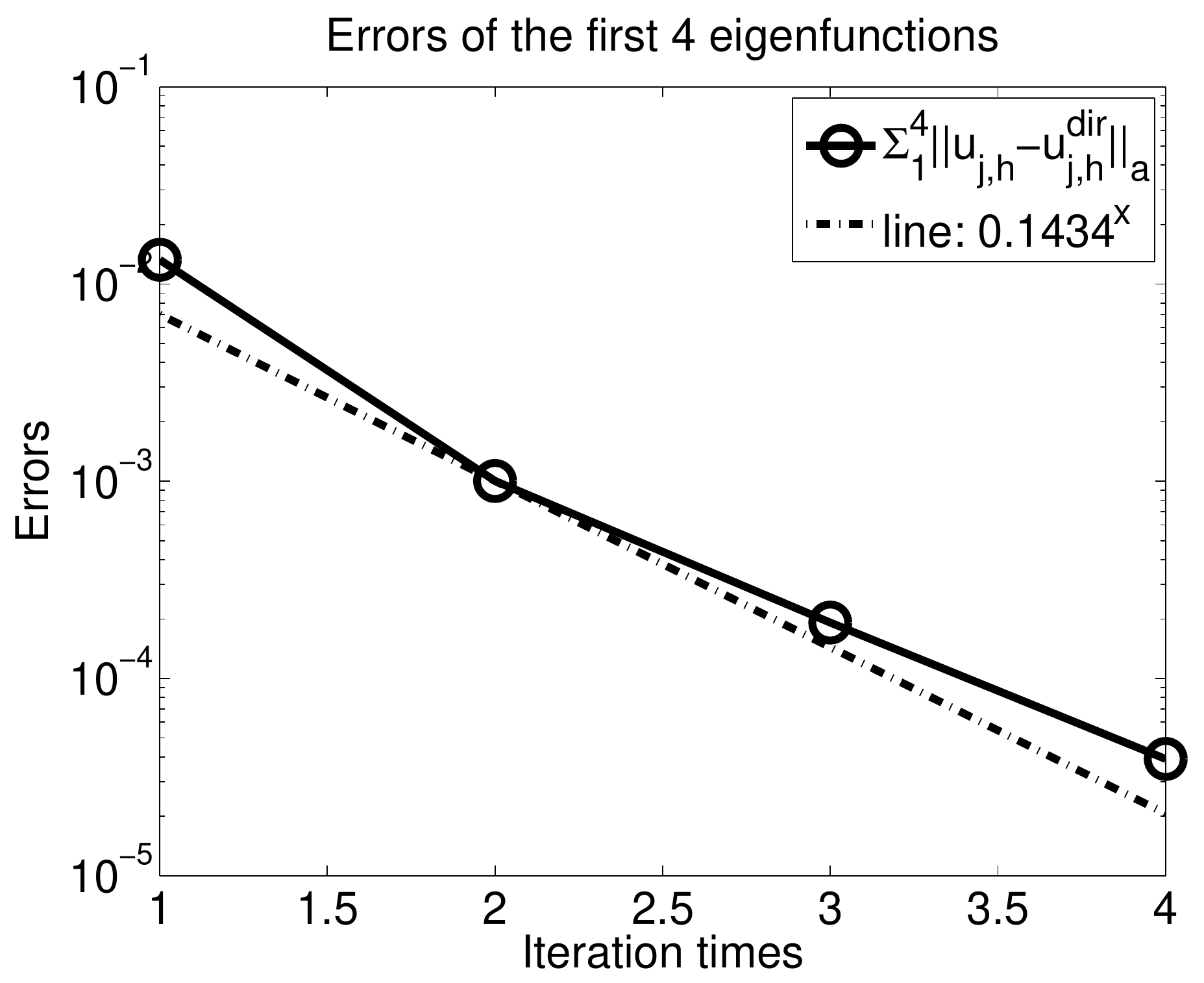}
\includegraphics[width=7cm,height=5cm]{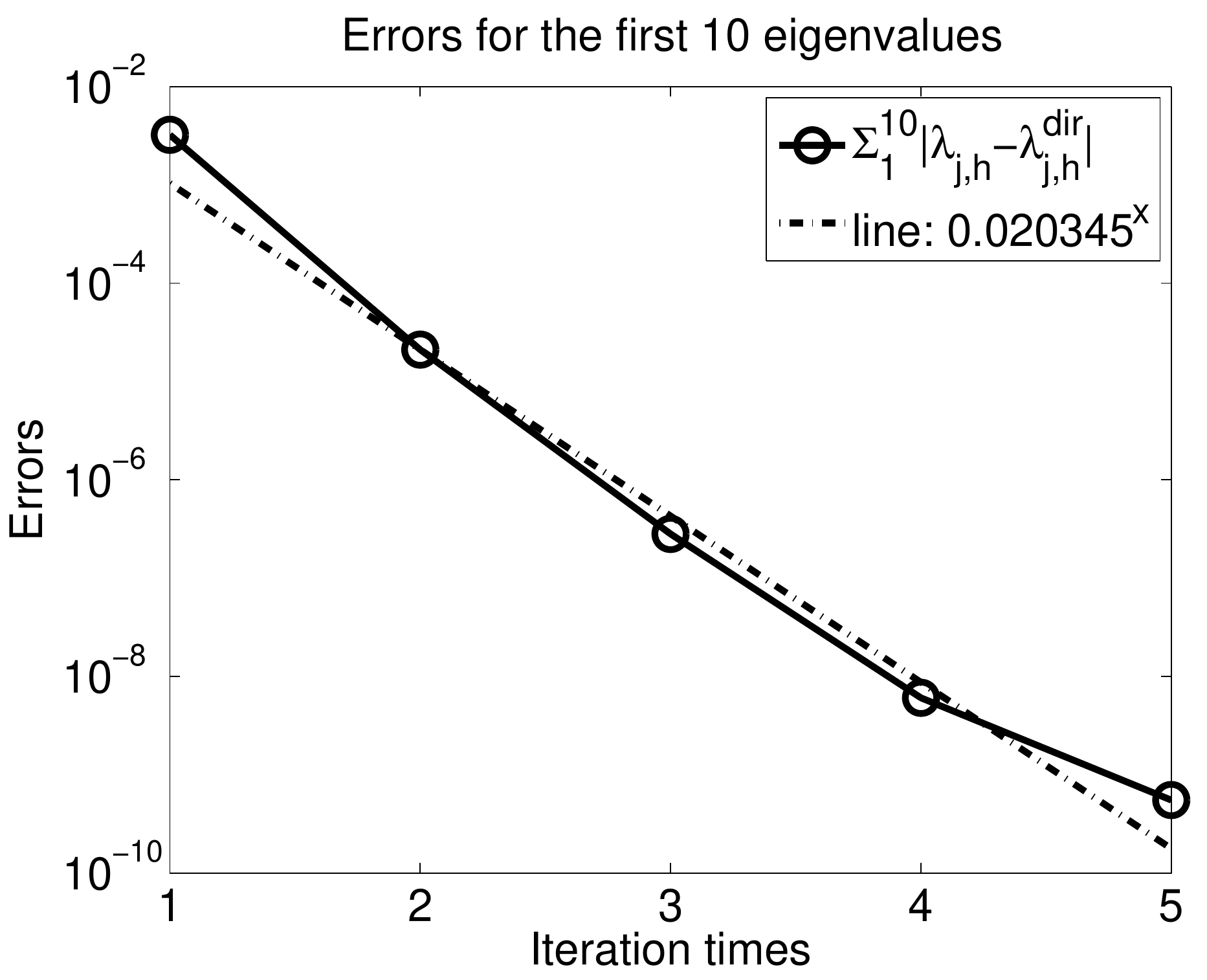}
\caption{Error estimates for the first $4$ eigenfunction and $10$ eigenvalue approximations by
Algorithm \ref{Multilevel_Correction_Eig}. Here the coarse mesh is chosen as the right one in Figure \ref{Exam_1_Coarse_Meshes}.}
\label{Exam_1_Error_H16}
\end{figure}
From Figures \ref{Exam_1_Error_H8} and \ref{Exam_1_Error_H16},  we can find that the finer mesh $\mathcal T_H$ has
faster convergence speed which validates theoretical results in Theorems \ref{Error_Estimate_One_Smoothing_Theorem}
and \ref{Error_Multi_Correction_Theorem}, Corollary \ref{Error_Estimate_Corollary}.

Furthermore, in order to check the efficiency of the proposed algorithms, we also investigate the CPU time for computing
the first $10$ eigenpair approximations.
Here, the convergence criterion is set to be $|\lambda_h-\bar \lambda_h|< 1$e-$9$.
Figure \ref{Exam_1_CPUTime} shows the corresponding CPU time when the coarse meshes are chosen as the two
in Figure \ref{Exam_1_Coarse_Meshes}.
The results in Figure \ref{Exam_1_CPUTime} validate the estimate of computational work in Theorem \ref{Work_Estimate_Multi_Correction}.
\begin{figure}[!hbt]
\centering
\includegraphics[width=7cm,height=5cm]{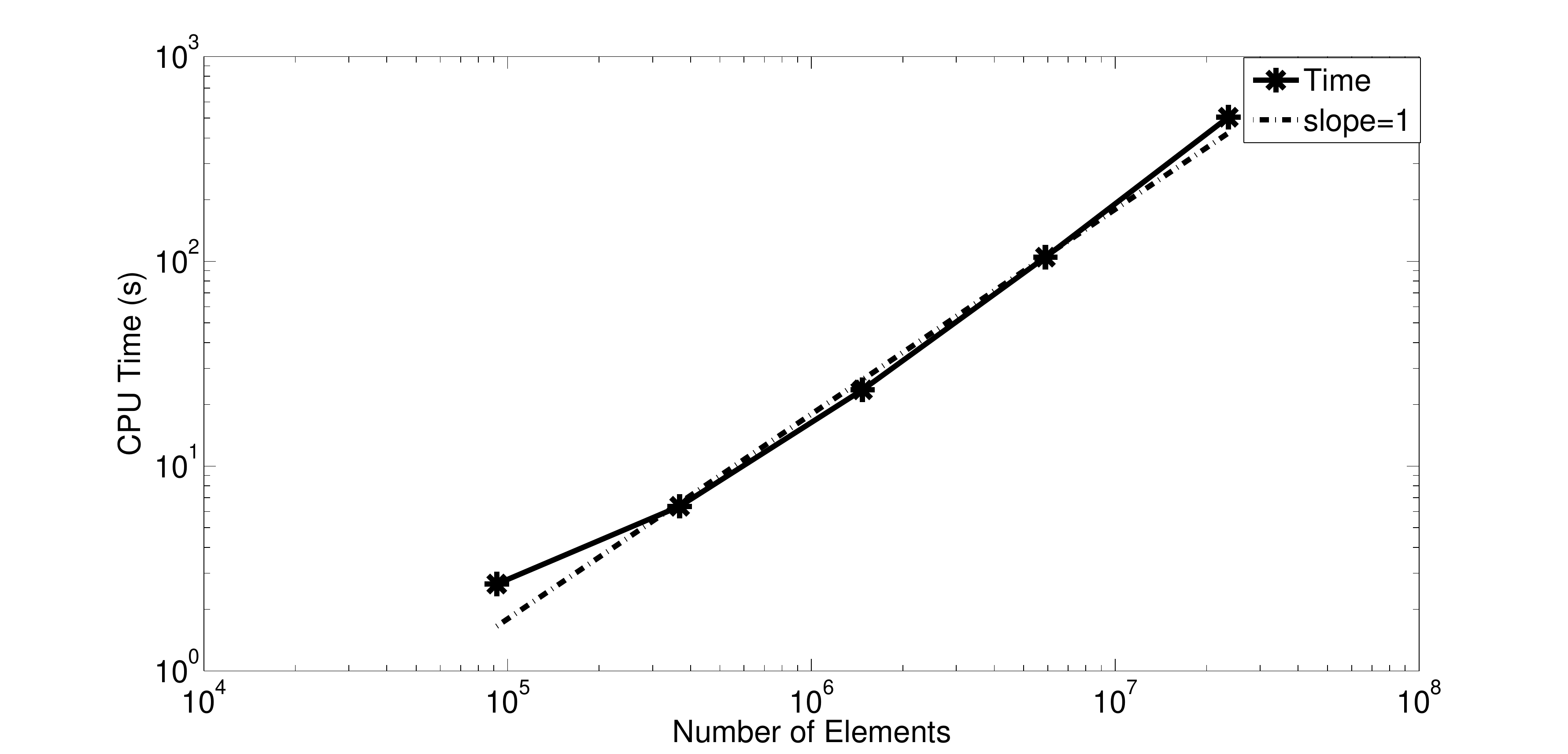}
\includegraphics[width=7cm,height=5cm]{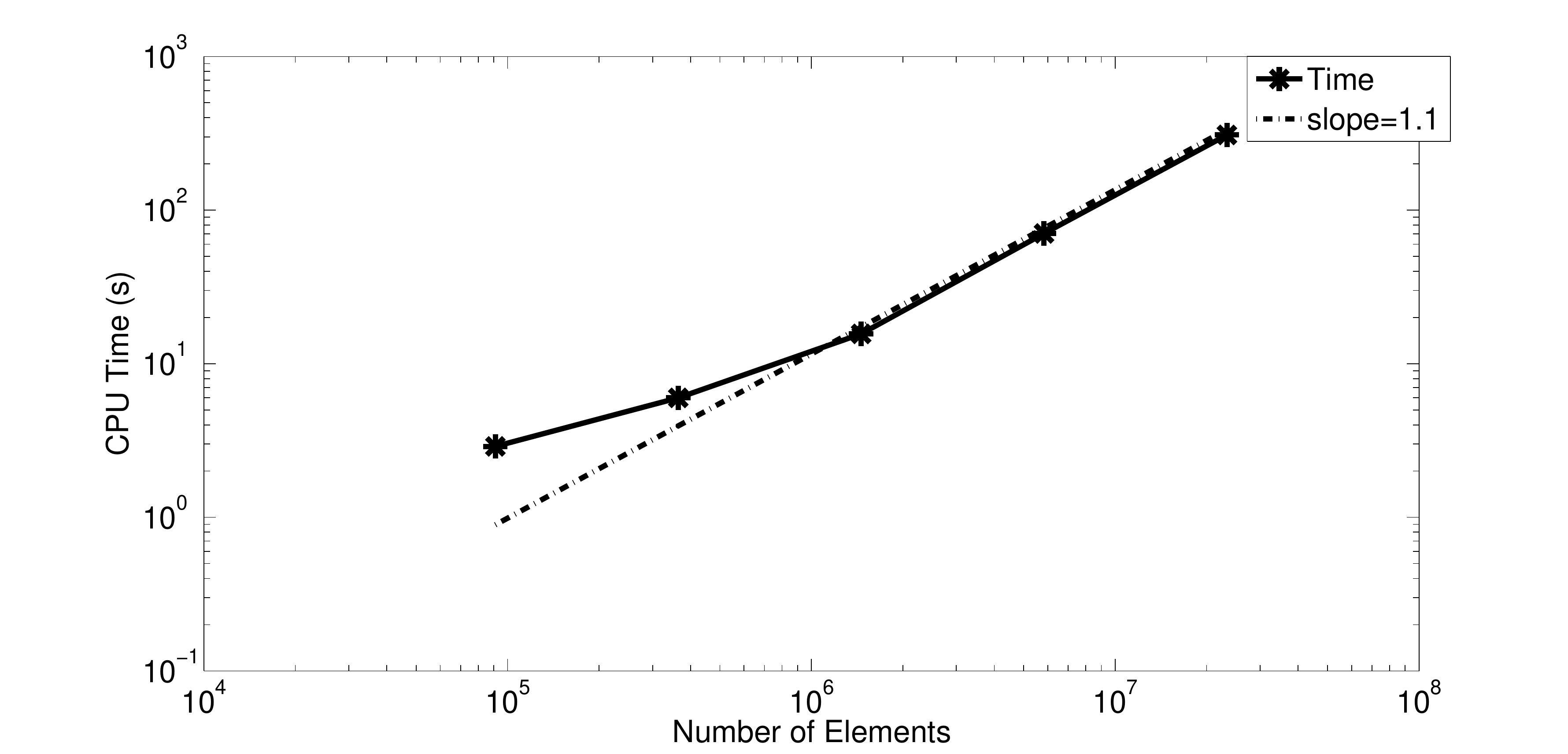}
\caption{CPU time for Algorithm \ref{Multilevel_Correction_Eig} \revise{with $32$ processors}, the left subfigure shows the CPU time 
when the coarse mesh is chosen as the left one in  
Figure \ref{Exam_1_Coarse_Meshes} and the right subfigure shows the CPU time 
when the coarse mesh is chosen as the right one in Figure \ref{Exam_1_Coarse_Meshes}.}\label{Exam_1_CPUTime}
\end{figure}

\subsubsection*{Example 2}
In the second example, we also solve the  eigenvalue problem (\ref{Eigenvalue_Problem_2d}).
Here, the computing domain $\Omega = (0,2)\times (0,2)$ is partitioned into
five parts by four circles with the radius $0.25$ and centers $(0.5, 0.5)$, $(1.5, 0.5)$, $(0.5, 1.5)$ and $(1.5,1.5)$, respectively.
The coefficient $\mathcal K$ in (\ref{Eigenvalue_Problem_2d}) is defined as follows
\begin{eqnarray*}
\mathcal K = \left\{
\begin{array}{ll}
10, & \textrm{in}\ \Omega_1=\{(x,y)\in\mathbb R^2| (x-0.5)^2+(y-0.5)^2\leq 1/16\},\\
10, & \textrm{in}\ \Omega_2=\{(x,y)\in\mathbb R^2| (x-1.5)^2+(y-0.5)^2\leq 1/16\},\\
10, & \textrm{in}\ \Omega_3=\{(x,y)\in\mathbb R^2| (x-0.5)^2+(y-1.5)^2\leq 1/16\},\\
10, & \textrm{in}\ \Omega_4=\{(x,y)\in\mathbb R^2| (x-1.5)^2+(y-1.5)^2\leq 1/16\},\\
1,  & \textrm{in}\ \Omega_5=\Omega/(\bar\Omega_1\cup\bar\Omega_2\cup\bar\Omega_3\cup\bar\Omega_4).
\end{array}
\right.
\end{eqnarray*}

In order to investigate the effect of the coarse mesh $\mathcal T_H$ on the convergence rate of the
nonnested augmented subspace method, we also choose two meshes shown in Figure \ref{Exam_2_Coarse_Meshes} for the test.
For the comparison, we select the same finest mesh which consists of $441600$ elements for this example.
In this example, we check the convergence behavior for computing the first $12$ eigenfunction and $20$ eigenvalue approximations.
\begin{figure}[!hbt]
\centering
\includegraphics[width=7cm,height=5cm]{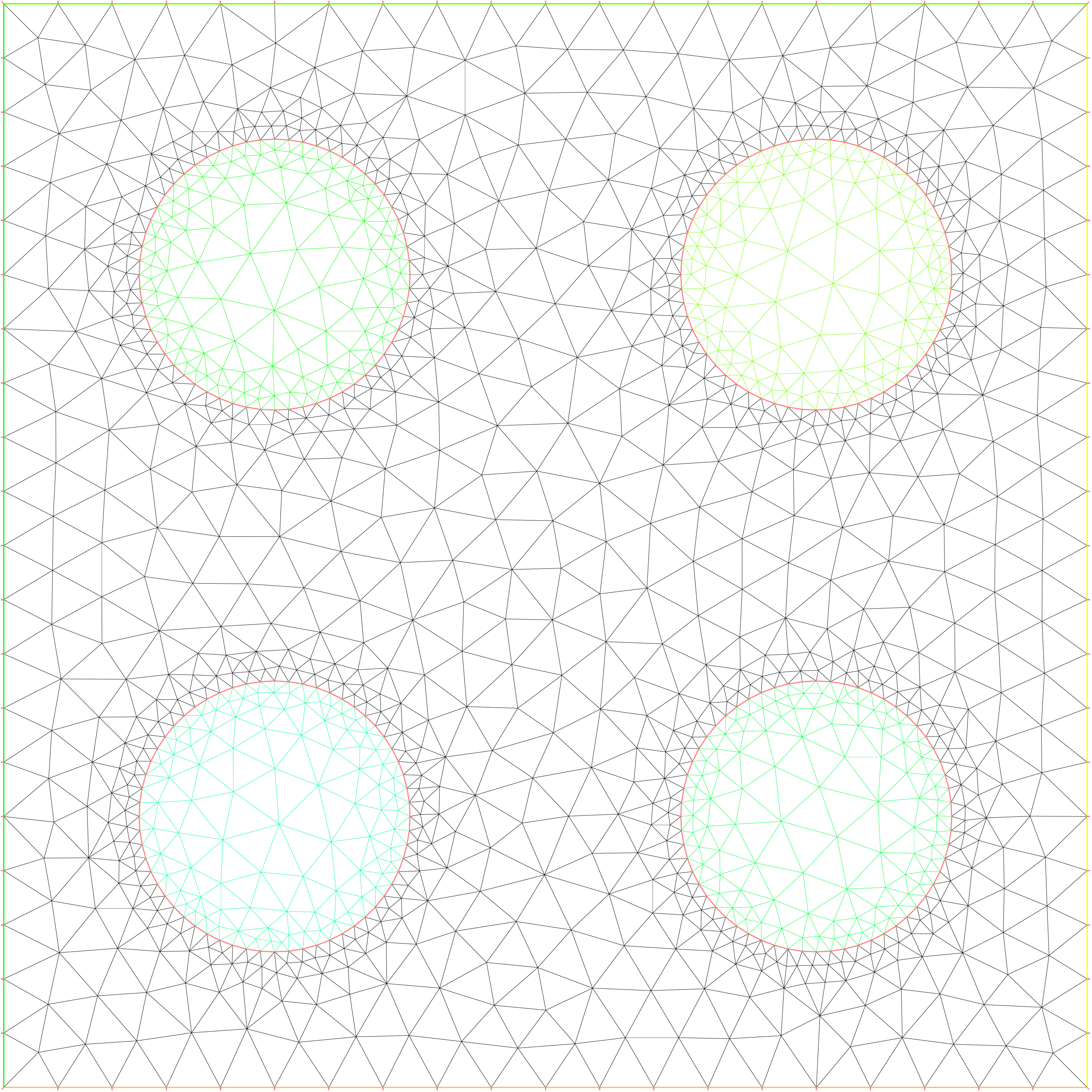}
\hskip-0cm
\includegraphics[width=7cm,height=5cm]{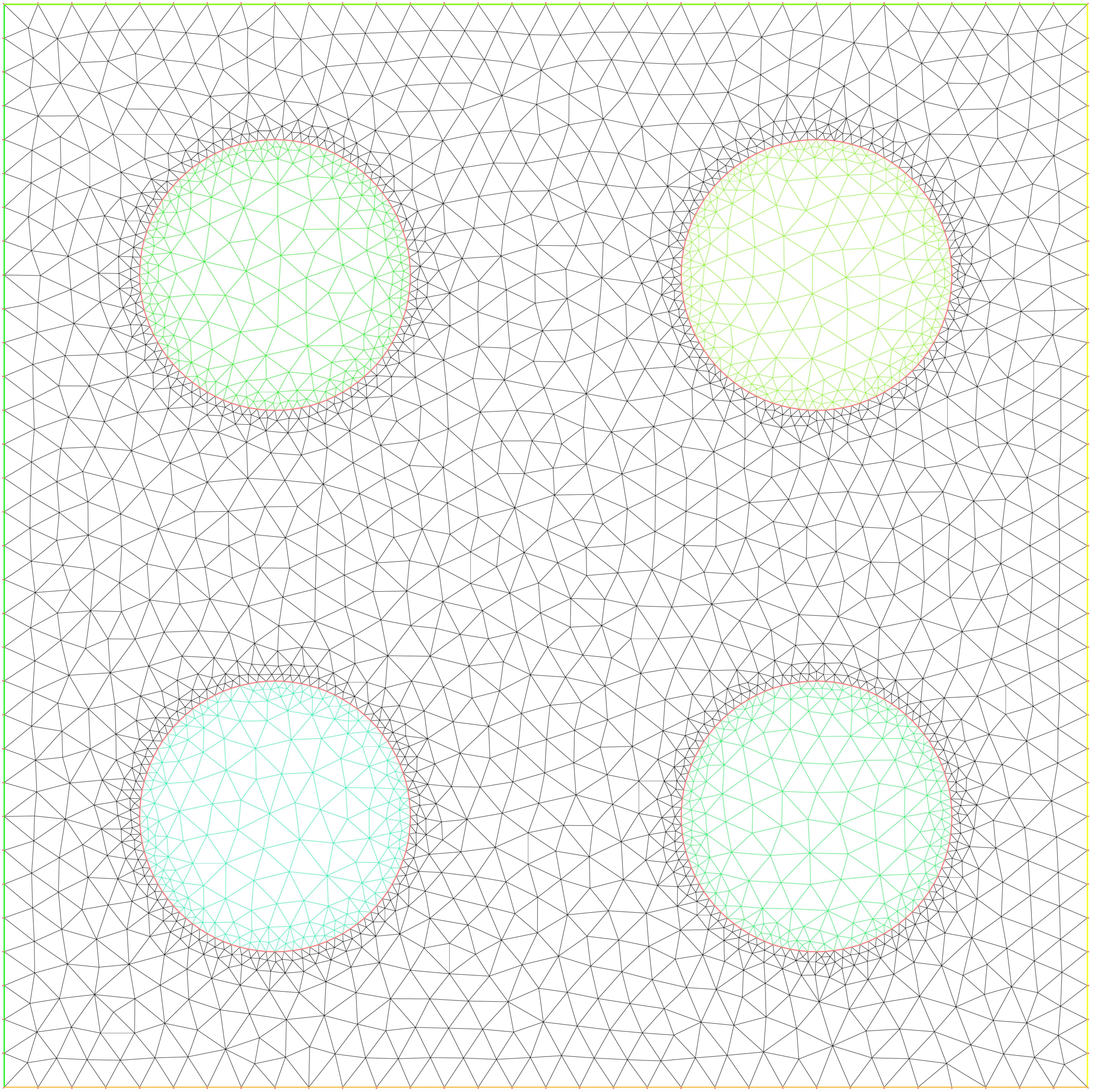}
\caption{Two coarse meshes $\mathcal T_H$ for Example 2: The left coarse mesh consists of $838$ elements, and the right
one $1992$ elements.}\label{Exam_2_Coarse_Meshes}
\end{figure}

When the left mesh of Figure \ref{Exam_2_Coarse_Meshes} acts as the coarse mesh $\mathcal T_H$,
Figure \ref{Exam_2_Error_H20} presents the corresponding numerical results for the first $12$ eigenfunction and
$20$ eigenvalue approximations. When the coarse mesh $T_H$ is chosen as the right one in Figure \ref{Exam_2_Coarse_Meshes},
the numerical results are shown in Figure \ref{Exam_2_Error_H32}.
\begin{figure}[!hbt]
\centering
\includegraphics[width=7cm,height=5cm]{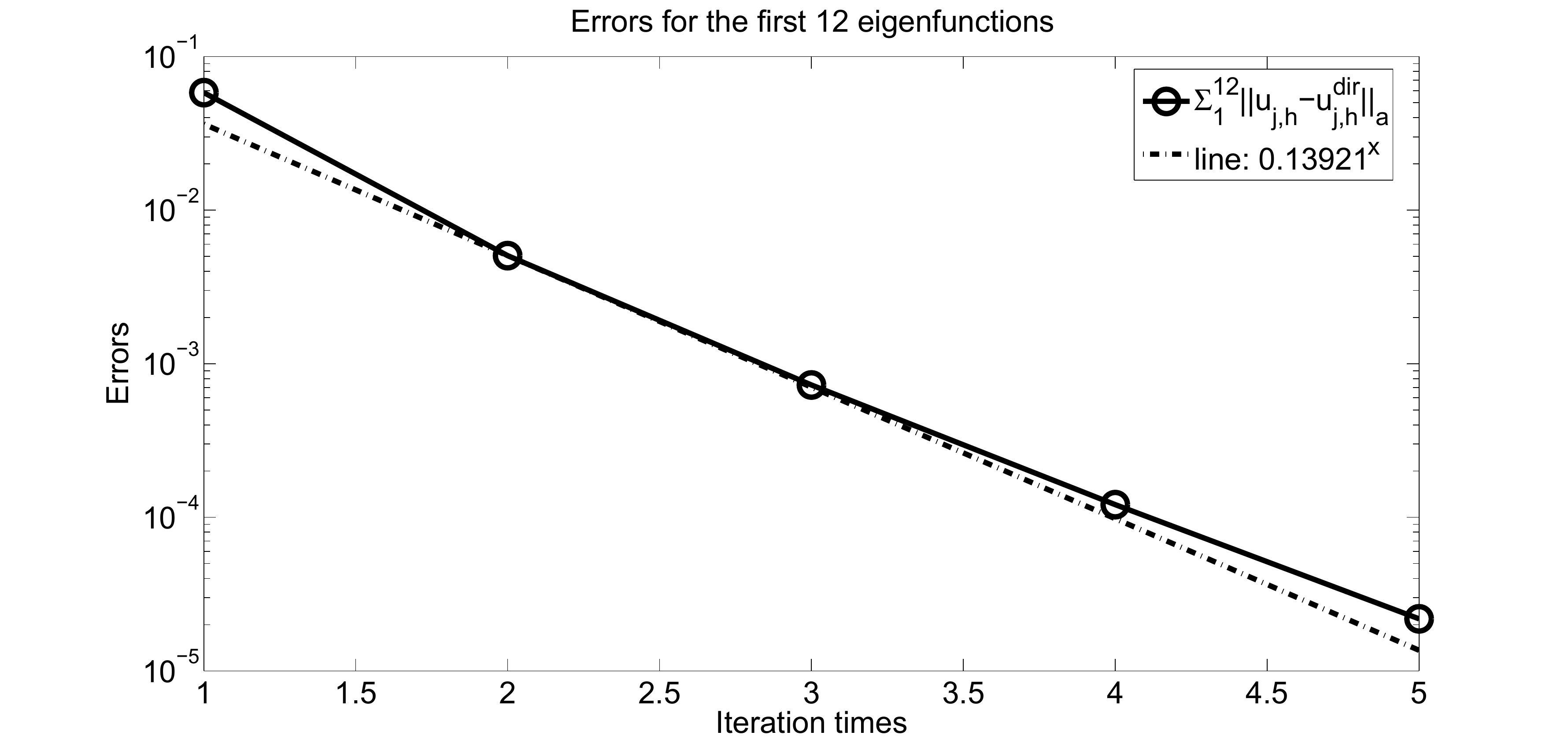}
\includegraphics[width=7cm,height=5cm]{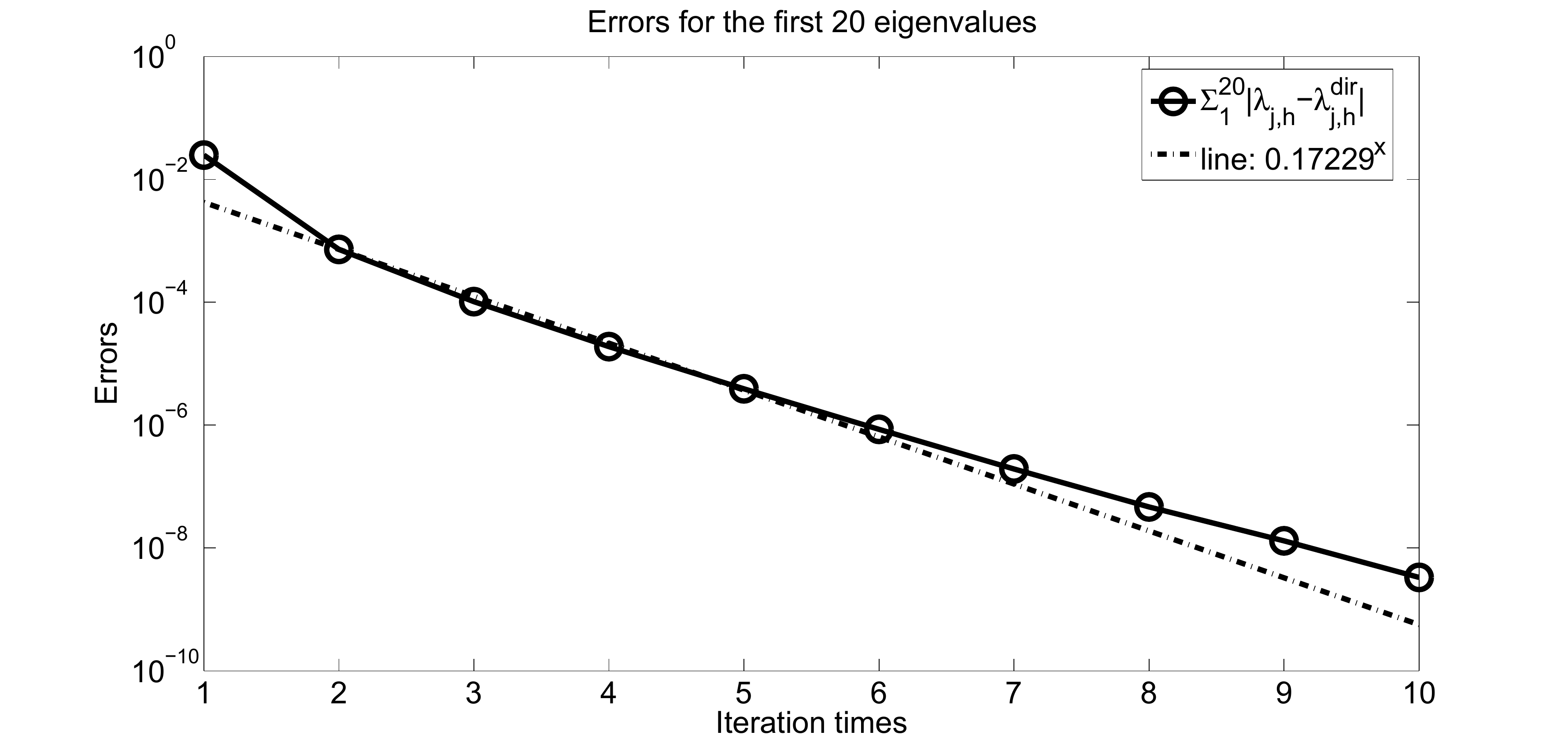}
\caption{Error estimates for the first $12$ eigenfunction and $20$ eigenvalue approximations by
Algorithm \ref{Multilevel_Correction_Eig}. Here the coarse mesh is chosen as the left one in Figure \ref{Exam_2_Coarse_Meshes}.}
\label{Exam_2_Error_H20}
\end{figure}
From Figures \ref{Exam_2_Error_H20} and \ref{Exam_2_Error_H32}, we can also find that
finer mesh $\mathcal T_H$ can lead to faster convergence speed which validates
Theorem \ref{Error_Estimate_One_Smoothing_Theorem}
and  Corollary \ref{Error_Estimate_Corollary}.
\begin{figure}[!hbt]
\centering
\includegraphics[width=7cm,height=5cm]{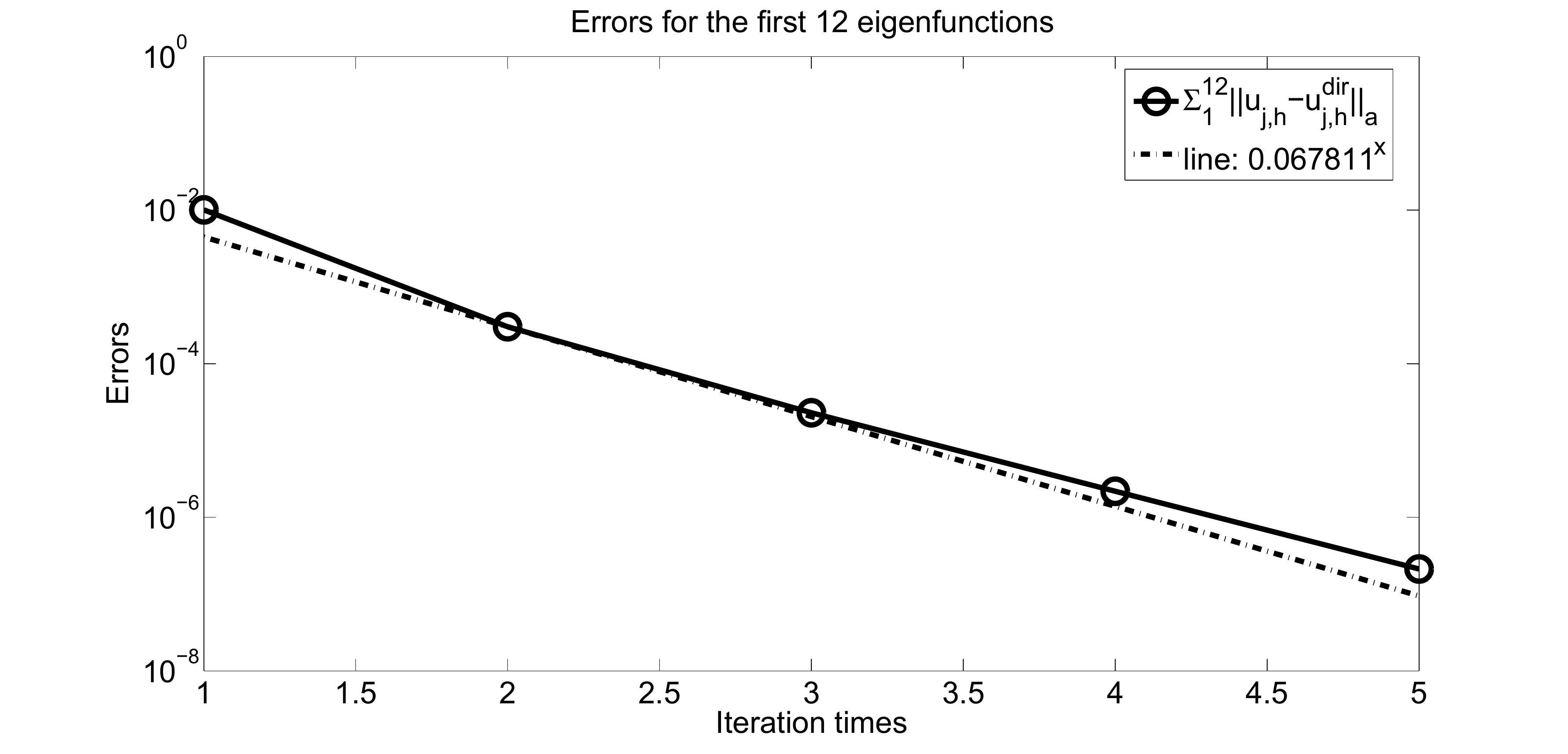}
\includegraphics[width=7cm,height=5cm]{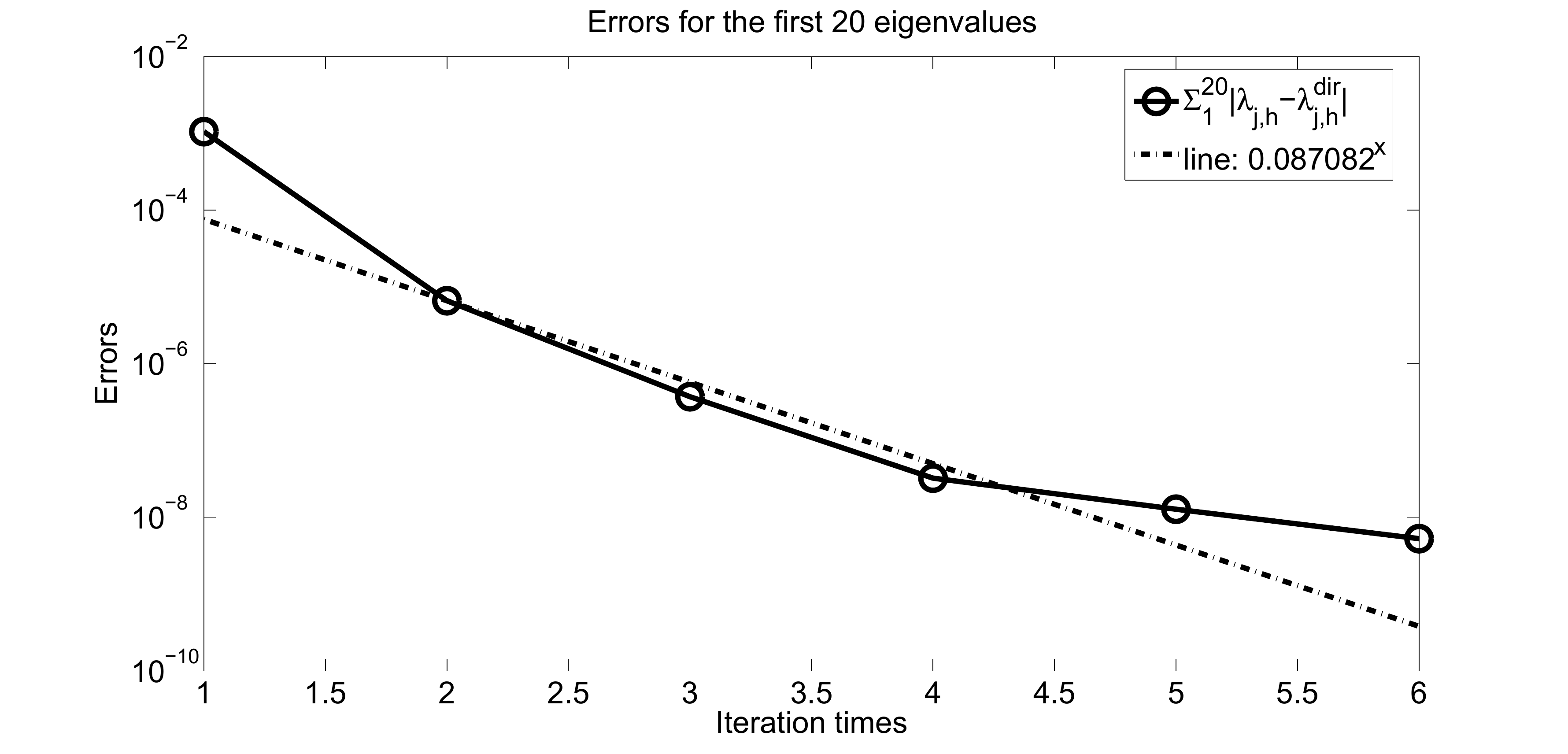}
\caption{Error estimates for the first $12$ eigenfunction and $20$ eigenvalue approximations by
Algorithm \ref{Multilevel_Correction_Eig}. Here the coarse mesh is chosen as the right one in Figure \ref{Exam_2_Coarse_Meshes}.}
\label{Exam_2_Error_H32}
\end{figure}

Similarly, we also investigate the efficiency with the CPU time for computing the first $10$ eigenpair approximations.
Here the convergence criterion is set to be $|\lambda_h-\bar \lambda_h|<1$e$-8$.
Figure \ref{Exam_2_CPUTime} shows the corresponding CPU time when the coarse meshes are chosen as the two in Figure \ref{Exam_2_Coarse_Meshes}
and the results also validate Theorem \ref{Work_Estimate_Multi_Correction}.
\begin{figure}[!hbt]
\centering
\includegraphics[width=7cm,height=5cm]{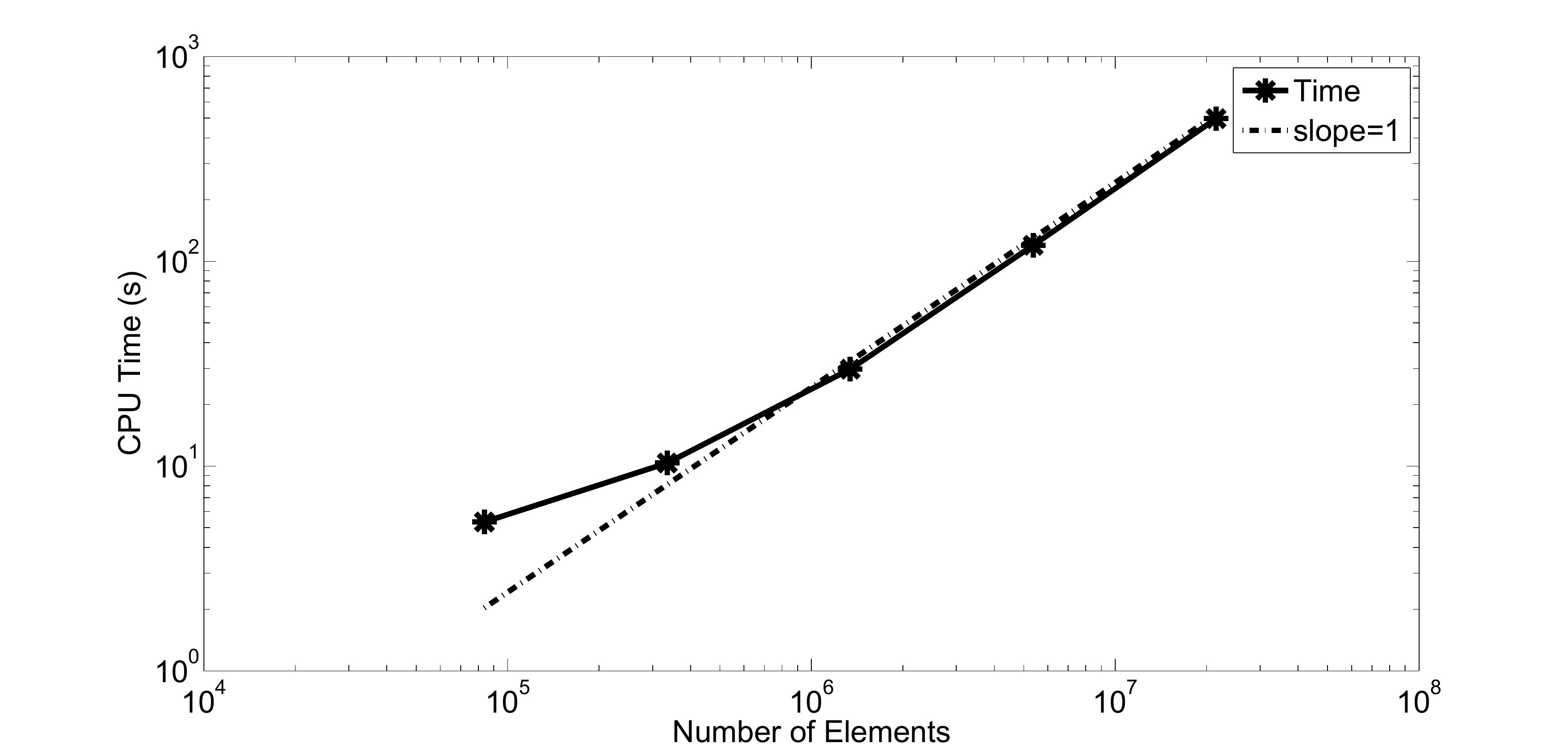}
\includegraphics[width=7cm,height=5cm]{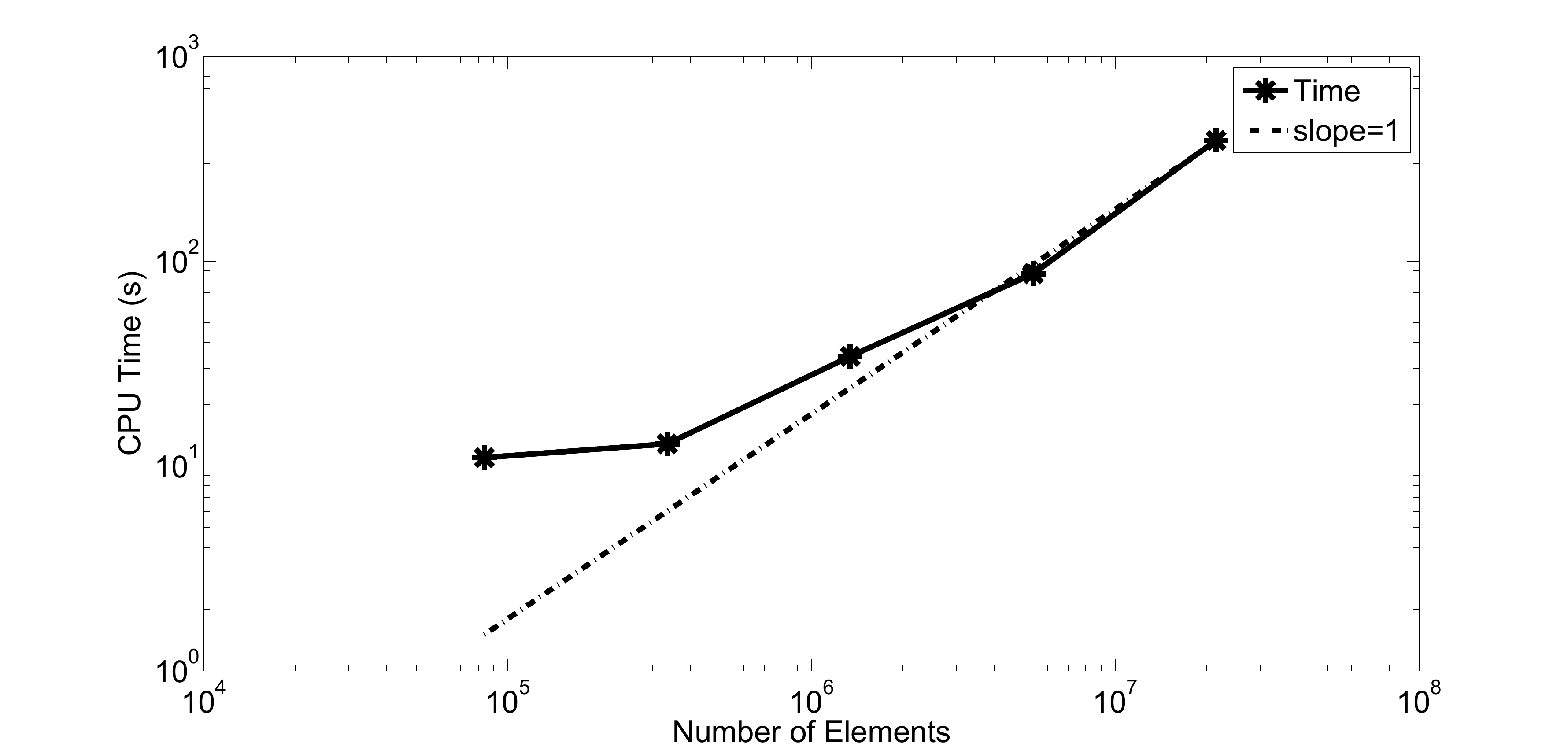}
\caption{CPU time for Algorithm \ref{Multilevel_Correction_Eig} \revise{with $32$ processors}, the left subfigure shows the CPU time when 
the coarse mesh is chosen as the left one in  Figure \ref{Exam_2_Coarse_Meshes} and the right subfigure shows the CPU time when the coarse 
mesh is chosen as the right one in Figure \ref{Exam_2_Coarse_Meshes}.}
\label{Exam_2_CPUTime}
\end{figure}

\subsection{Three dimensional experiments}
In the second subsection, the convergence and efficiency of Algorithms \ref{one correction step_Eig} and \ref{Multilevel_Correction_Eig}
are investigated for computing three dimensional eigenvalue problems.

\subsubsection*{Example 3}
In this example, we consider the elliptic eigenvalue problem (\ref{Eigenvalue_Problem_2d})
with a piecewise constant coefficient on the three dimensional domain $\Omega$ which includes a spherical surface interface.
The computing domain $\Omega = (0,2)\times (0,2)\times (0,2)$ is divided into two parts
by the surface of the sphere $\Omega_1$ with center $(1,1,1)$ and radius $0.5$.
Here, the coefficient  $\mathcal K$ is defined as follows
\begin{eqnarray}
\mathcal K = \left\{
\begin{array}{ll}
1,  & \textrm{in}\ \Omega_1=\{(x,y,z)\in\mathbb R^3| (x-1)^2+(y-1)^2+(z-1)^2\leq 1/4\},\\
10, & \textrm{in}\ \Omega_2=\Omega/\bar\Omega_1.
\end{array}
\right.
\end{eqnarray}

Similarly, in order to investigate the effect of the coarse grid $\mathcal T_H$ on the convergence behavior,
this example also selects two coarse meshes as shown in Figure \ref{Exam_3_Coarse_Meshes}.
For comparison, we use the same finest mesh with $650145$ elements for our test in this example.
Here, we check the convergence for the first $4$ eigenfunction and $10$ eigenvalue approximations.
\begin{figure}[!hbt]
\centering
\includegraphics[width=6cm,height=7cm]{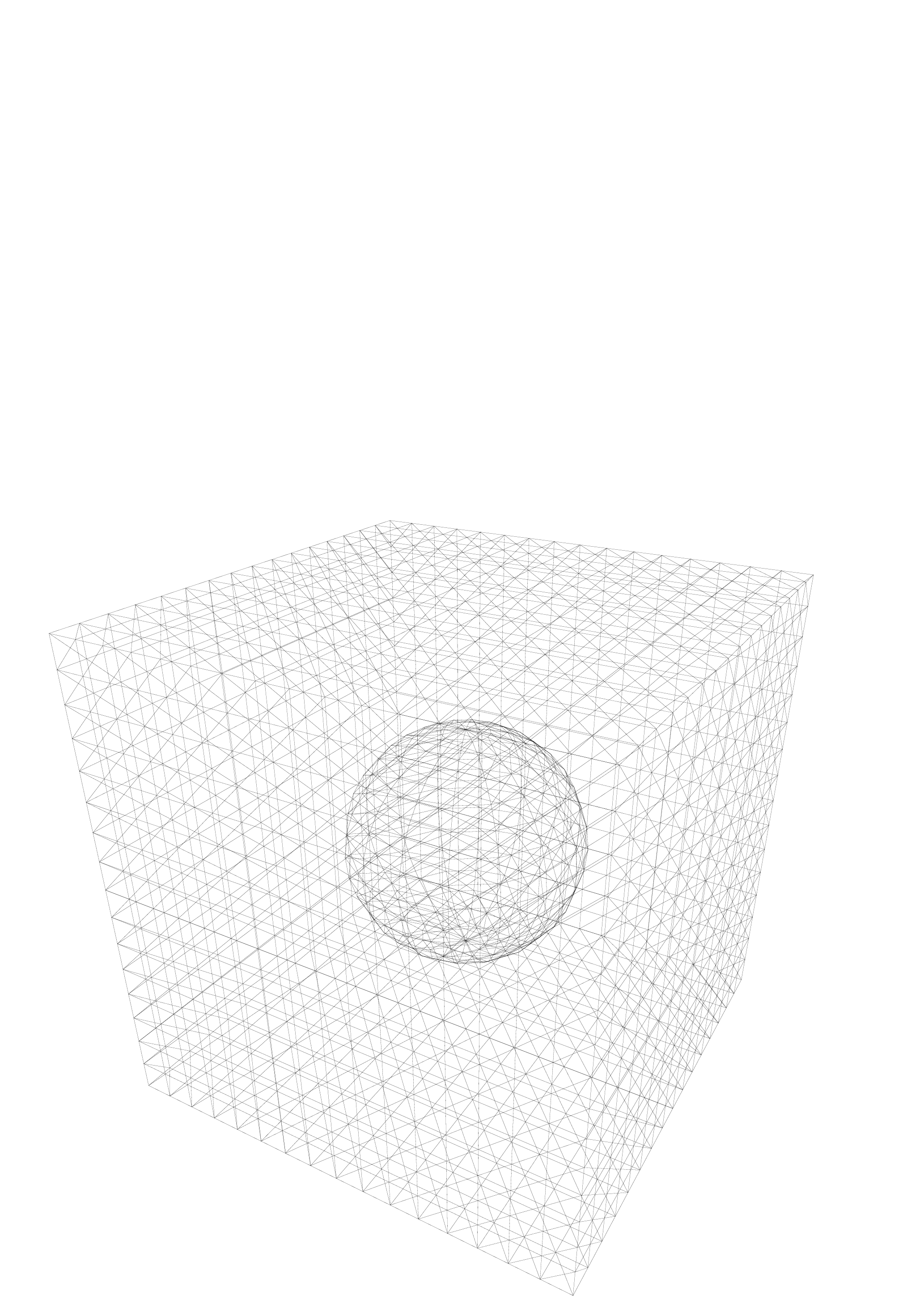}
\includegraphics[width=6cm,height=7cm]{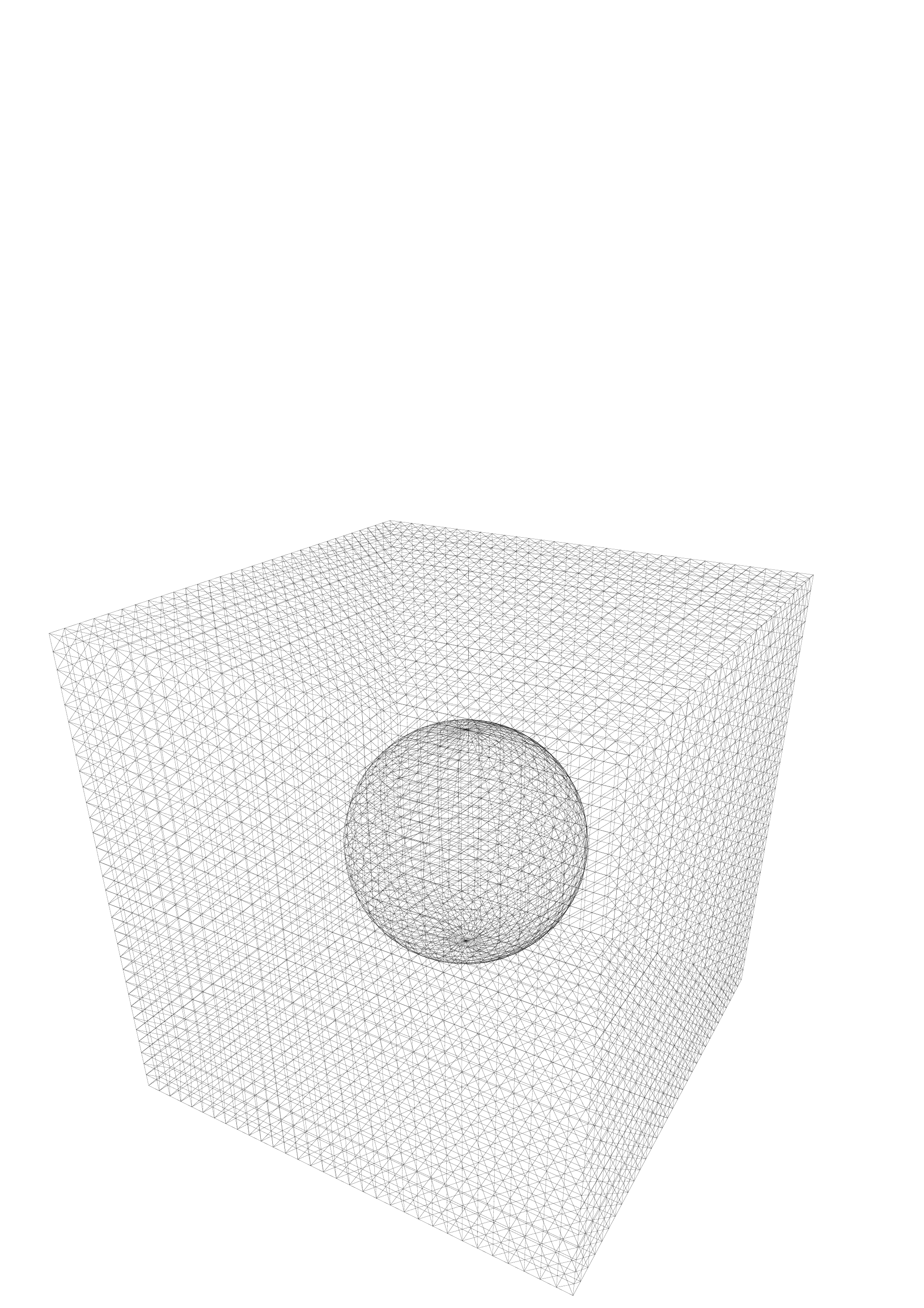}
\caption{Two coarse meshes $\mathcal T_H$ for Example 3: The left coarse mesh consists of $13169$ elements, and the right
one $40748$ elements.}\label{Exam_3_Coarse_Meshes}
\end{figure}
When the coarse meshes $\mathcal T_H$ are chosen as the left one and right one in Figure \ref{Exam_3_Coarse_Meshes},
the corresponding numerical results are shown in Figures \ref{Exam_3_Error_H16} and \ref{Exam_3_Error_H32}, respectively.
From Figures \ref{Exam_3_Error_H16} and \ref{Exam_3_Error_H32}, we can find that the finer $\mathcal T_H$ has better
convergence rate, which validates the theoretical results in Theorem \ref{Error_Estimate_One_Smoothing_Theorem} and Corollary \ref{Error_Estimate_Corollary}.
\begin{figure}[!hbt]
\centering
\includegraphics[width=7cm,height=5cm]{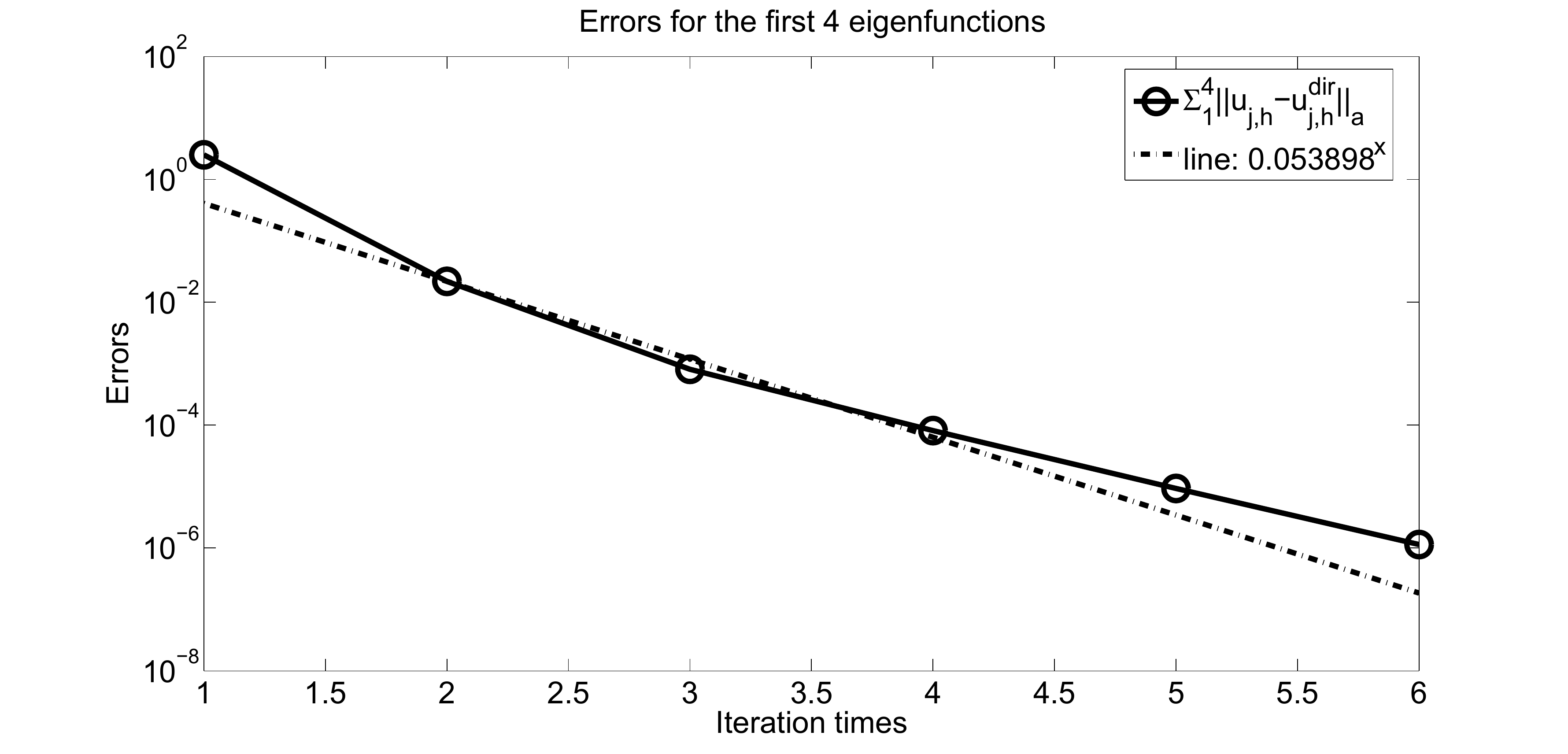}
\includegraphics[width=7cm,height=5cm]{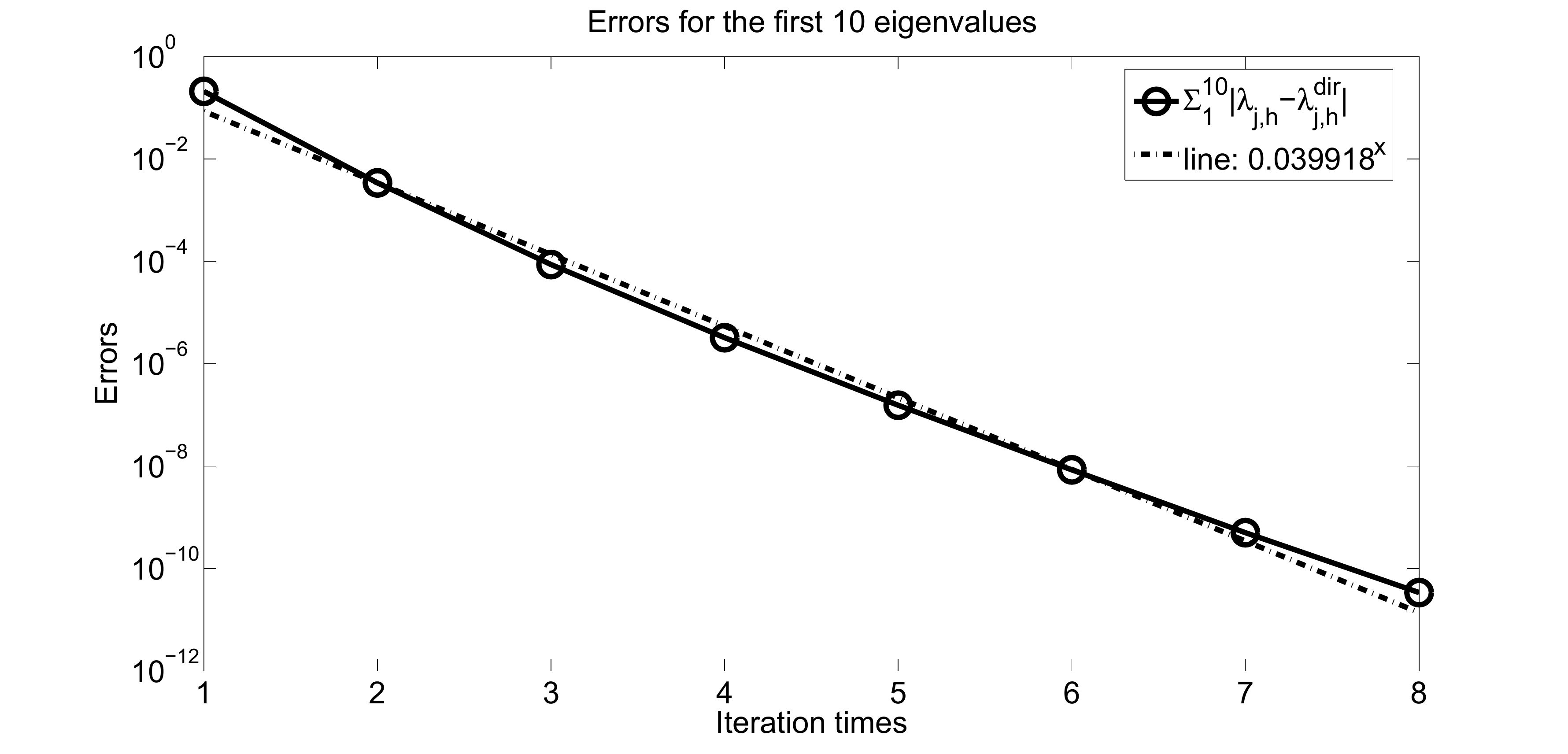}
\caption{Error estimates for the first $4$ eigenfunction and $10$ eigenvalue approximations by
Algorithm \ref{Multilevel_Correction_Eig}. Here the coarse mesh is chosen as the left one in Figure \ref{Exam_3_Coarse_Meshes}.}
\label{Exam_3_Error_H16}
\end{figure}
\begin{figure}[!hbt]
\centering
\includegraphics[width=7cm,height=5cm]{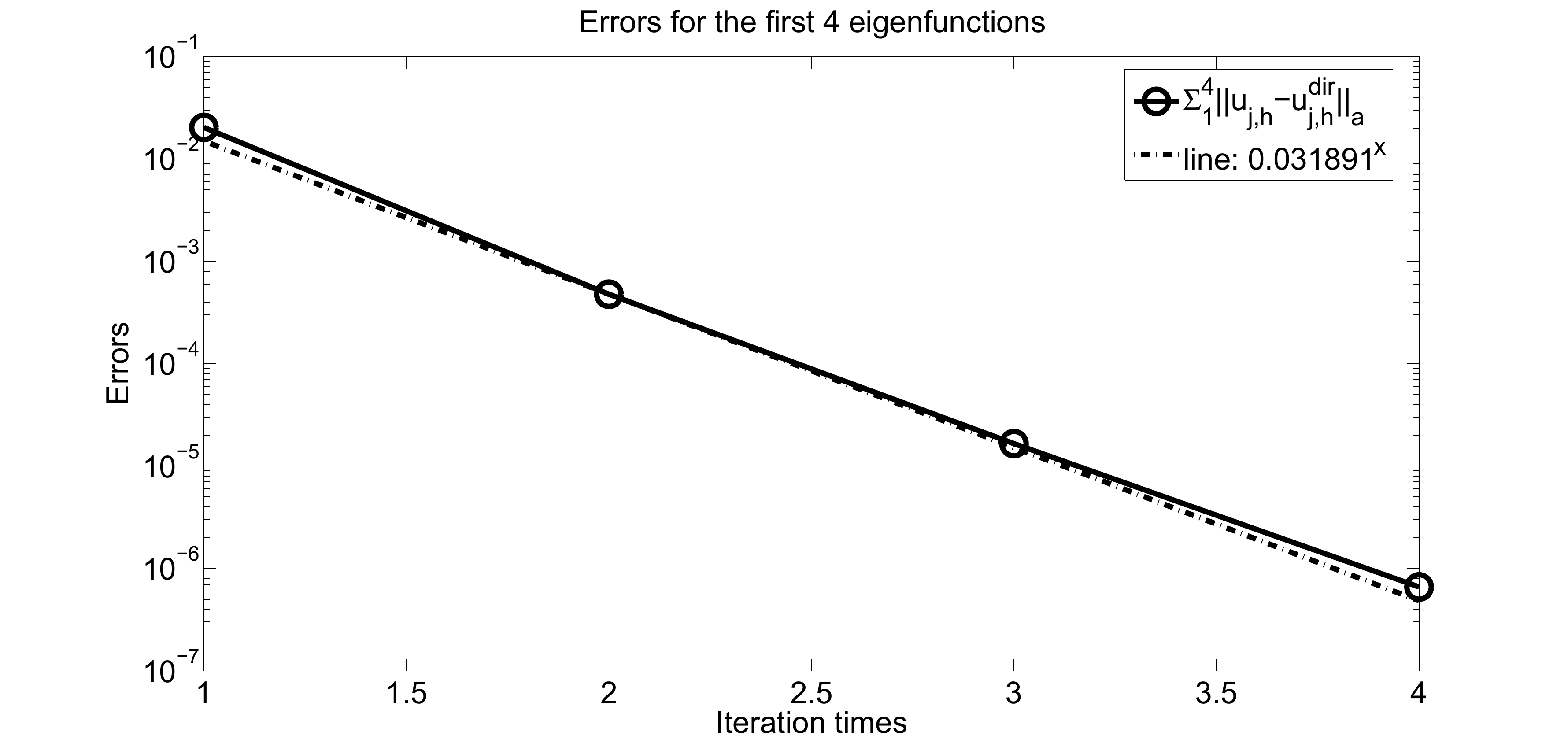}
\includegraphics[width=7cm,height=5cm]{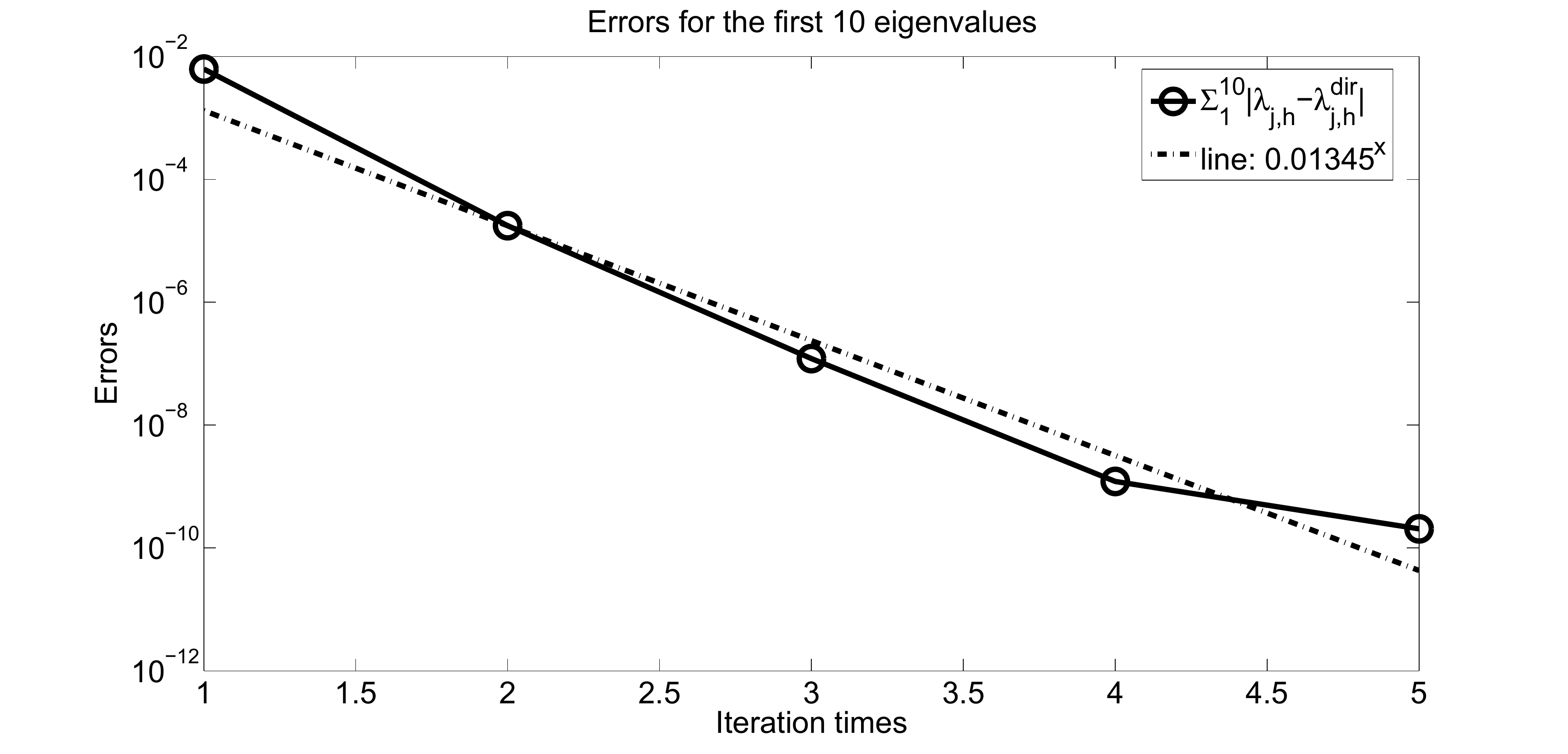}
\caption{Error estimates for the first $4$ eigenfunction and $10$ eigenvalue approximations by
Algorithm \ref{Multilevel_Correction_Eig}. Here the coarse mesh is chosen as the right one in Figure \ref{Exam_3_Coarse_Meshes}.}
\label{Exam_3_Error_H32}
\end{figure}

In order to check the efficiency of the proposed algorithms in this paper, we also check the CPU time for computing the first $10$
eigenpair approximations. The convergence criterion is set to be $|\lambda_h-\bar \lambda_h|<1\textrm{e}$-$9$.
Figure \ref{Exam_3_CPUTime} shows the CPU time results corresponding to the two coarse meshes in Figure \ref{Exam_3_Coarse_Meshes}.
The results here also show the linear scale of the complexity for Algorithm \ref{Multilevel_Correction_Eig}
for the three dimensional eigenvalue problems with curved interfaces.
\begin{figure}[!hbt]
\centering
\includegraphics[width=7cm,height=5cm]{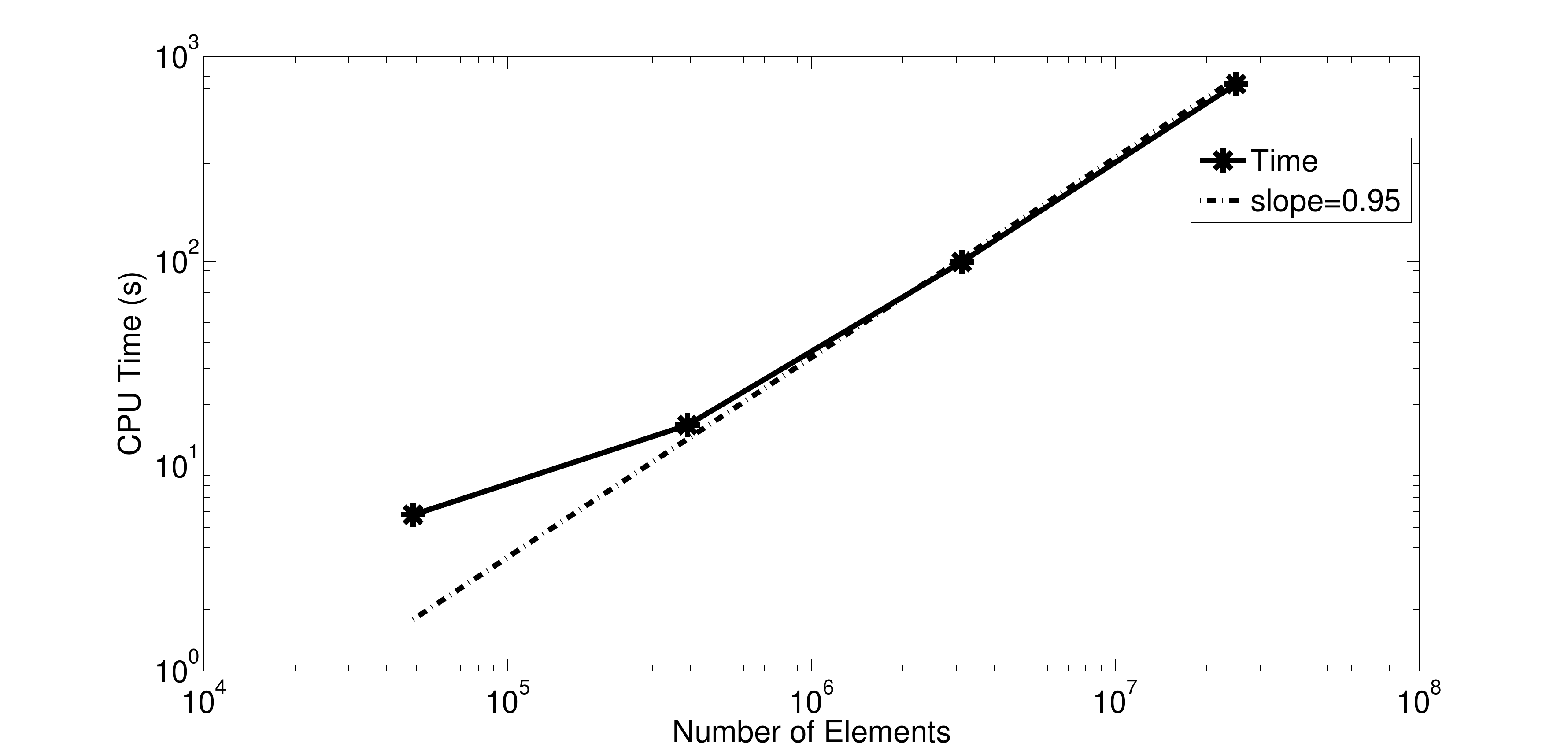}
\includegraphics[width=7cm,height=5cm]{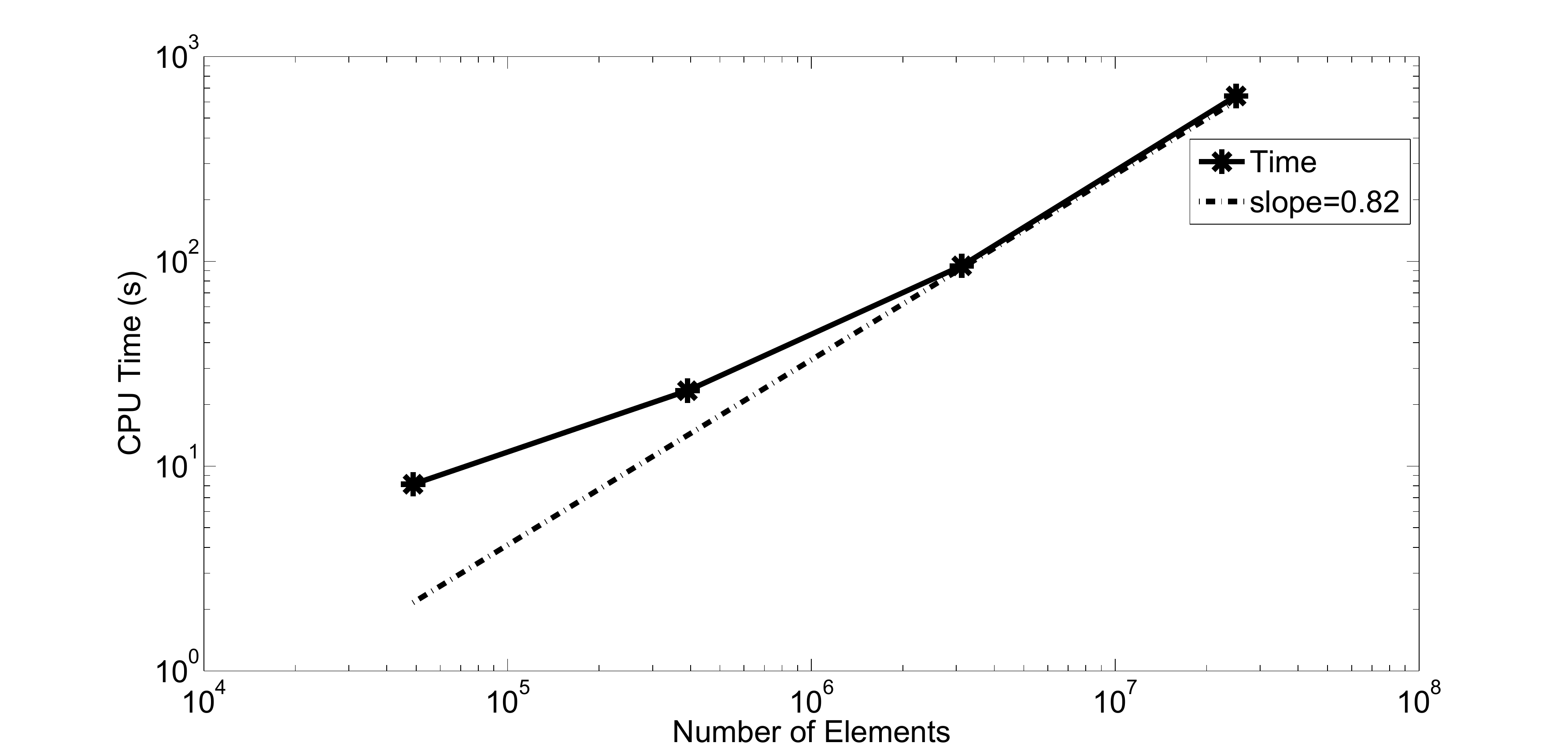}
\caption{CPU time for Algorithm \ref{Multilevel_Correction_Eig} \revise{with $16$ processors}, the left subfigure
shows the CPU time when the coarse mesh is chosen as the left one in  Figure \ref{Exam_3_Coarse_Meshes}
and the right subfigure shows the CPU time when the coarse mesh is chosen
as the right one in Figure \ref{Exam_3_Coarse_Meshes}.}
\label{Exam_3_CPUTime}
\end{figure}

\subsubsection*{Example 4}
In this example, we focus on the three-dimensional elliptic eigenvalue problem (\ref{Eigenvalue_Problem_2d})
with a piecewise constant coefficient which is defined on the three-dimensional domain $\Omega = (0,2)\times (0,2)\times (0,2)$
with curve interfaces by two spheres. The computing domain is partitioned into three parts by two spheres with
centers  $(0.5,0.5,0.5)$ and $(1.5, 1.5, 1.5)$ and radius sizes $1.3$ and $1/3$, respectively.
The coefficient $\mathcal K$ is defined as follows
\begin{eqnarray}
\mathcal K = \left\{
\begin{array}{ll}
10, & \textrm{in}\  \Omega_1=\{(x,y,z)\in\mathbb R^3| (x-0.5)^2+(y-0.5)^2+(z-0.5)^2\leq 1/9\},\\
10, & \textrm{in}\  \Omega_2=\{(x,y,z)\in\mathbb R^3| (x-1.5)^2+(y-1.5)^2+(z-1.5)^2\leq 1/9\},\\
1, & \textrm{in}\ \Omega/(\bar\Omega_1\cup\bar\Omega_2).
\end{array}
\right.
\end{eqnarray}
Here, we also select two coarse meshes as shown in Figure \ref{Exam_4_Coarse_Meshes} for our tests.
For comparison, we use the same finest mesh with  $1178024$ elements for checking the convergence behaviors.

\begin{figure}[!hbt]
\centering
\includegraphics[width=6cm,height=7.5cm]{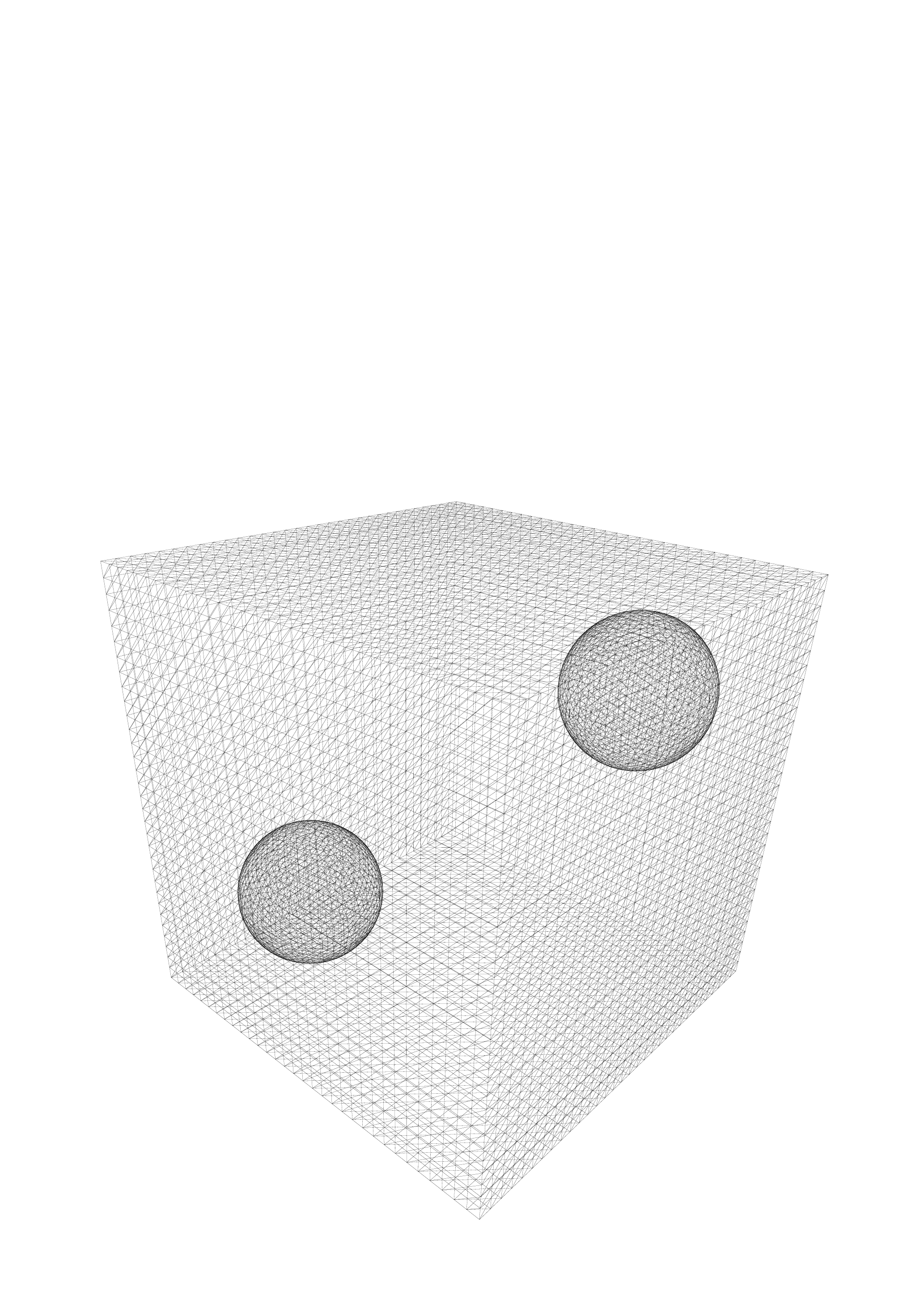}
\includegraphics[width=6cm,height=7.5cm]{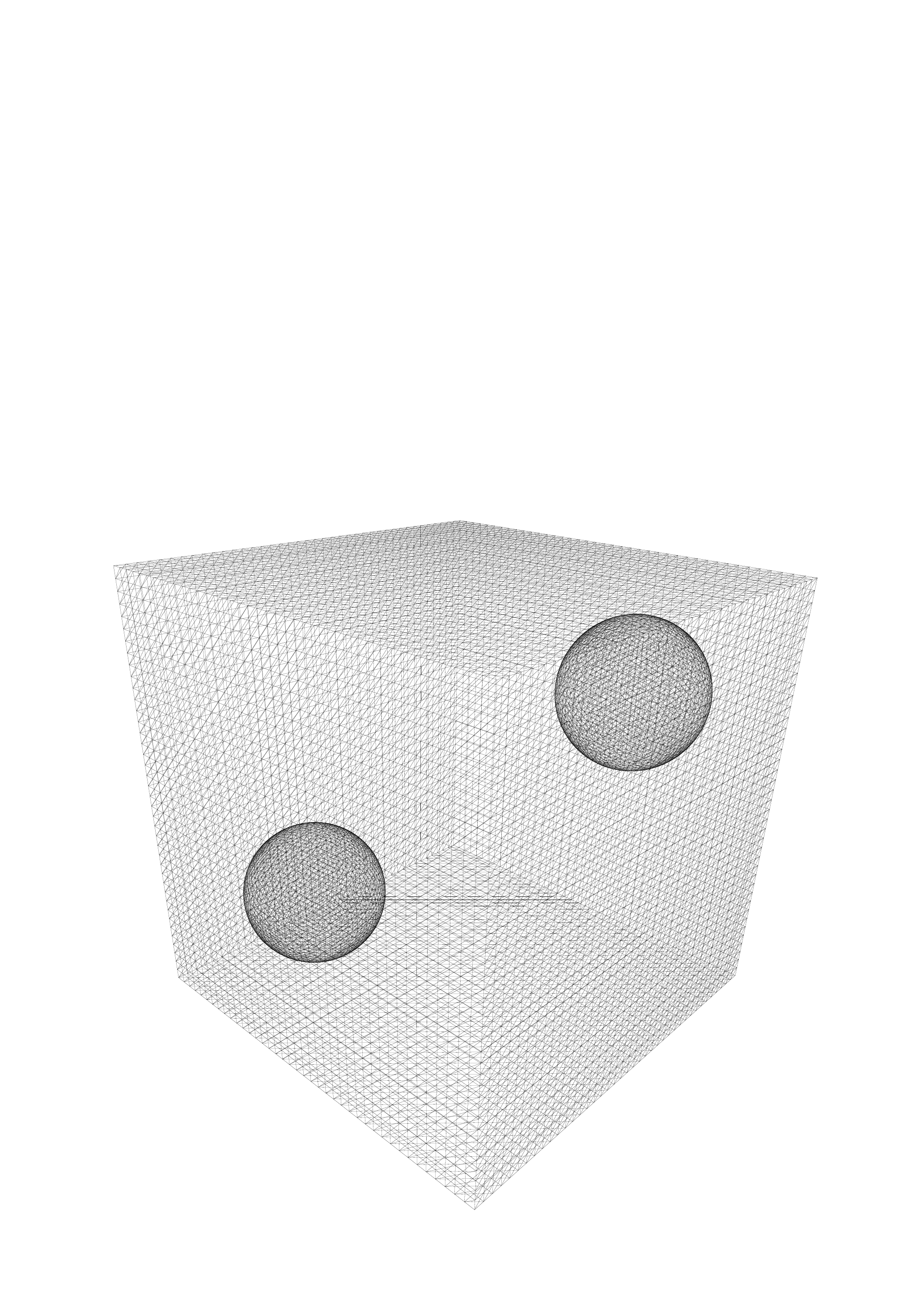}
\caption{Two coarse meshes $\mathcal T_H$ for Example 4: The left coarse mesh consists of $56660$ elements, and the right
one $92453$ elements.}\label{Exam_4_Coarse_Meshes}
\end{figure}

Figure \ref{Exam_4_Error_H32} and \ref{Exam_4_Error_H40} show the numerical results for the first $4$ eigenfunction and
$11$ eigenvalue approximations when the coarse meshes are chosen as the left and right ones in Figure \ref{Exam_4_Coarse_Meshes}.
From these two figures, we can also find the finer $\mathcal T_H$ leads to faster convergence speed which confirm the theoretical
results in Theorems \ref{Error_Estimate_One_Smoothing_Theorem} and \ref{Error_Multi_Correction_Theorem},
Corollary \ref{Error_Estimate_Corollary}.
\begin{figure}[!hbt]
\centering
\includegraphics[width=7cm,height=5cm]{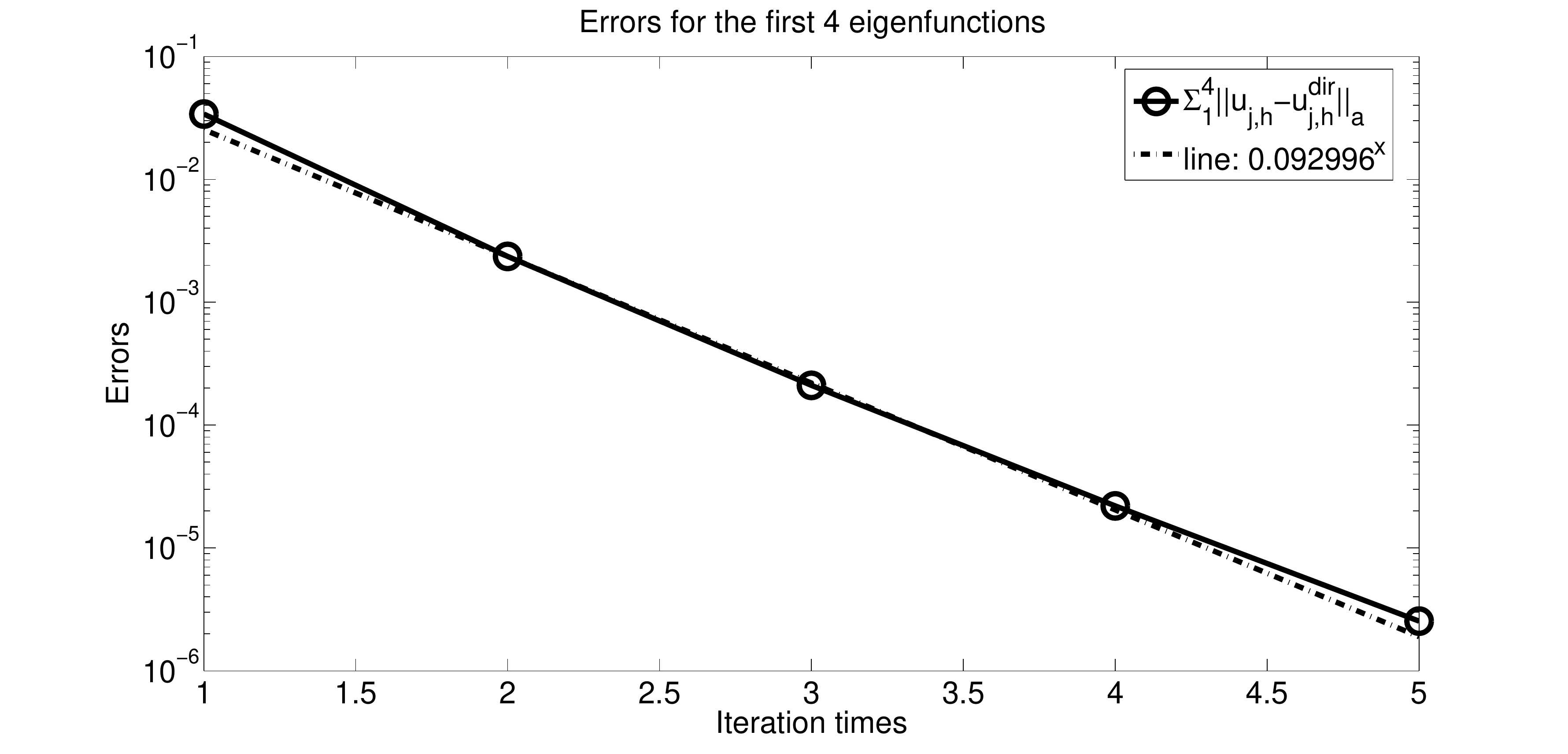}
\includegraphics[width=7cm,height=5cm]{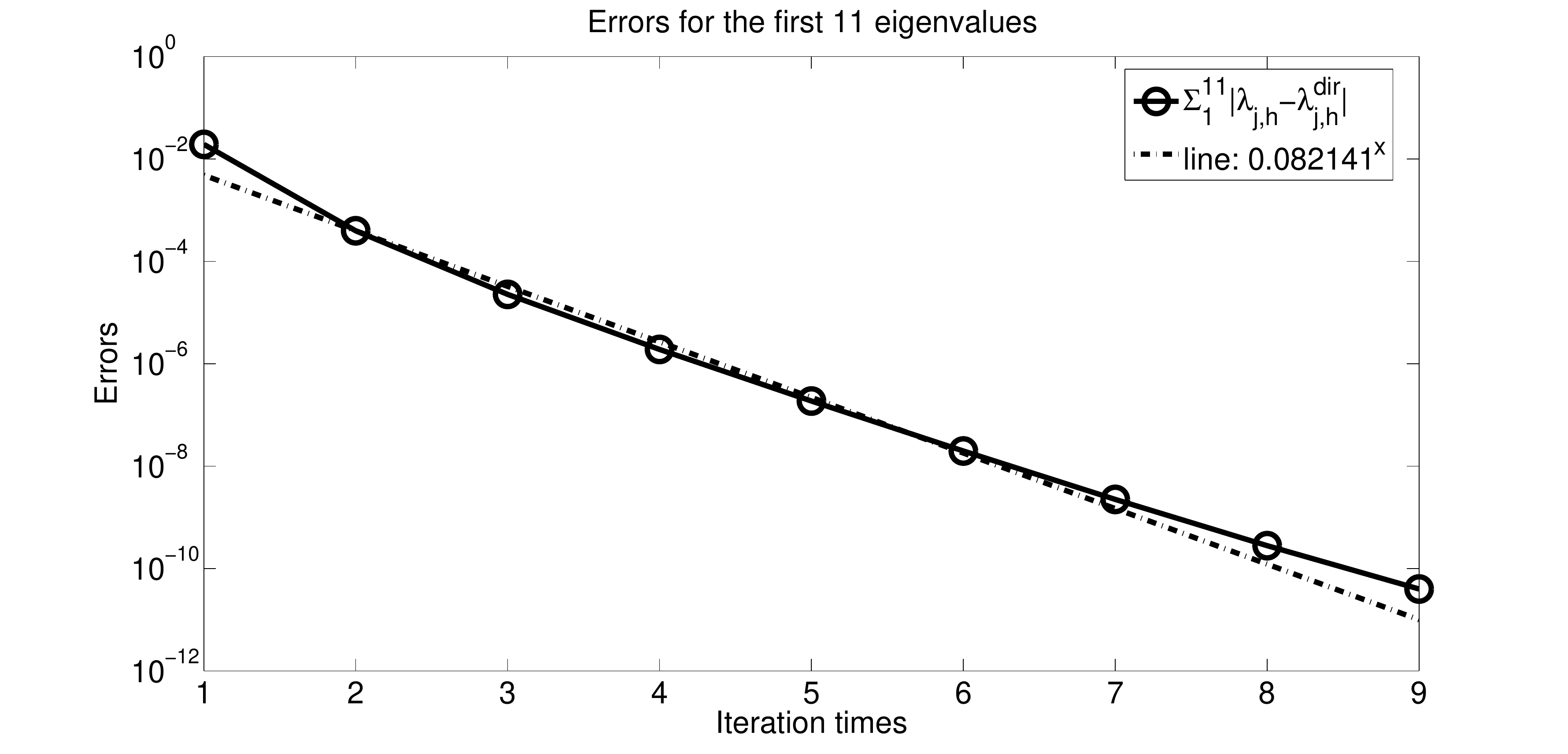}
\caption{Error estimates for the first $4$ eigenfunction and $11$ eigenvalue approximations by
Algorithm \ref{Multilevel_Correction_Eig}. Here the coarse mesh is chosen as the left one in Figure \ref{Exam_4_Coarse_Meshes}.}
\label{Exam_4_Error_H32}
\end{figure}
\begin{figure}[!hbt]
\centering
\includegraphics[width=7cm,height=5cm]{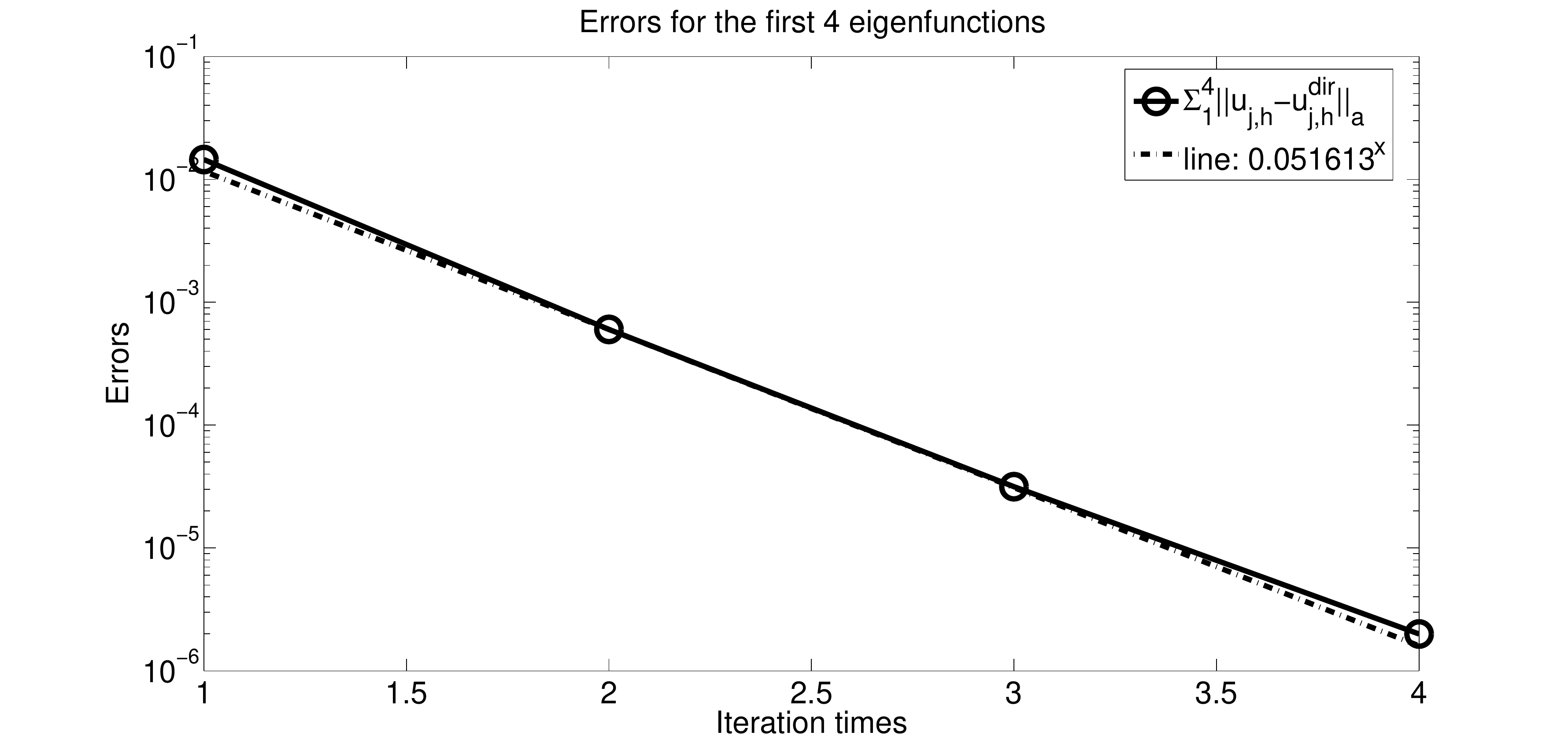}
\includegraphics[width=7cm,height=5cm]{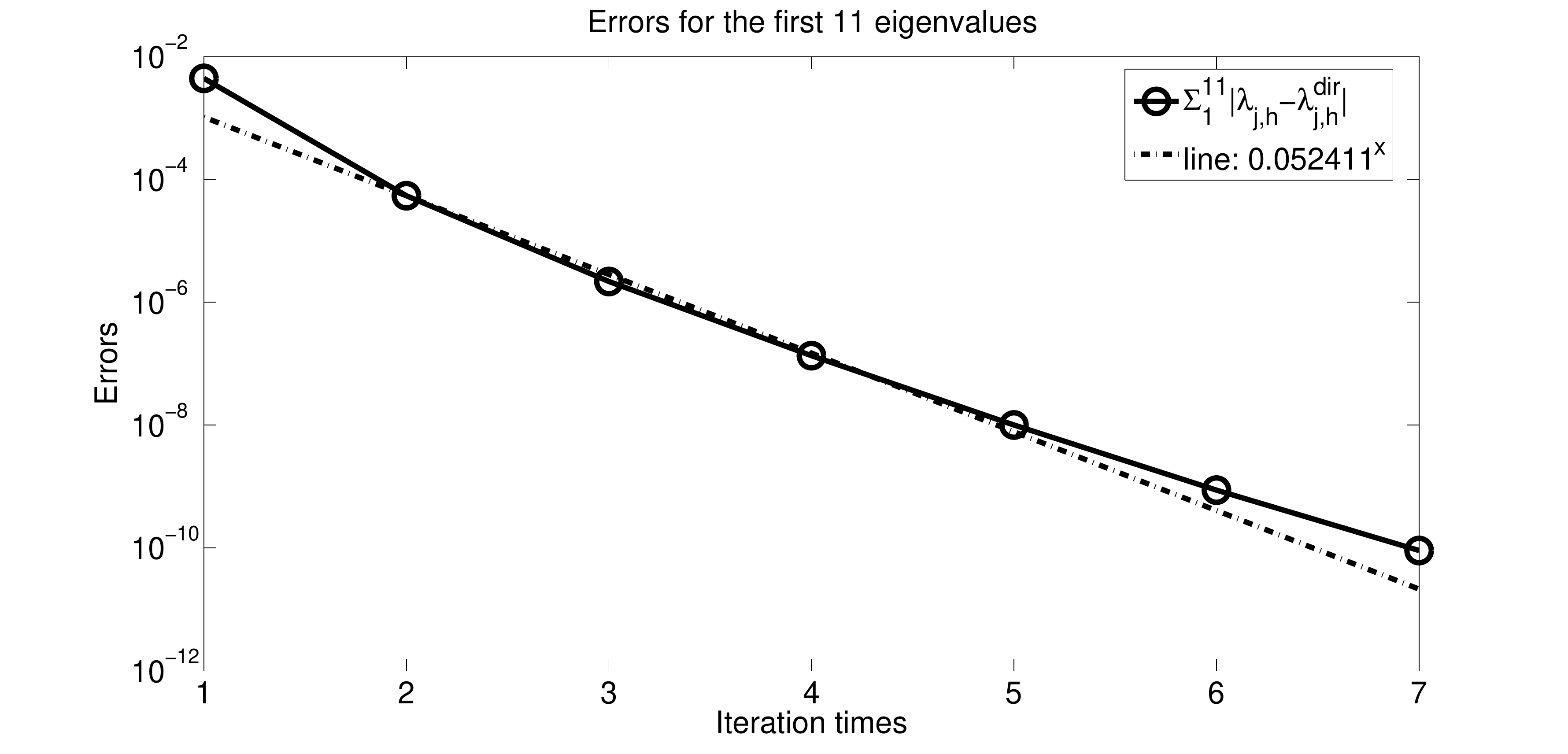}
\caption{Error estimates for the first $4$ eigenfunction and $11$ eigenvalue approximations by
Algorithm \ref{Multilevel_Correction_Eig}. Here the coarse mesh is chosen as the right one in Figure \ref{Exam_4_Coarse_Meshes}.}
\label{Exam_4_Error_H40}
\end{figure}

Here, we also present the CPU time results for computing the first $11$ eigenpair approximations.
The convergence criterion is also set to be $|\lambda_h-\bar\lambda_h|<1\textrm{e}$-$9$.
Figure \ref{Exam_4_CPUTime} shows the CPU time results corresponding to the two coarse meshes in Figure \ref{Exam_4_Coarse_Meshes}.
The results here also show the linear scale of the complexity for Algorithm \ref{Multilevel_Correction_Eig}
for the three dimensional eigenvalue problems with curved interfaces.
\begin{figure}[!hbt]
\centering
\includegraphics[width=7cm,height=5cm]{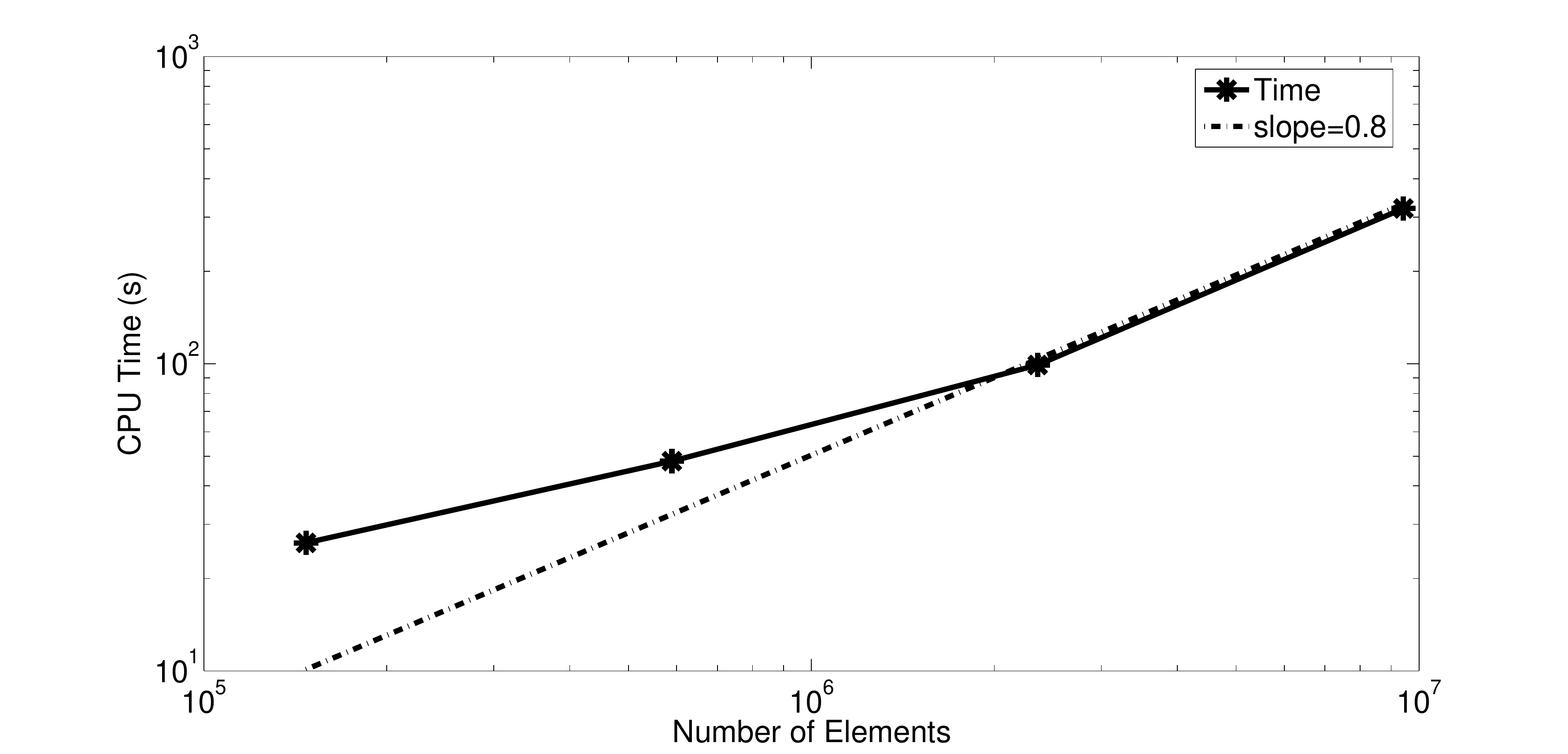}
\includegraphics[width=7cm,height=5cm]{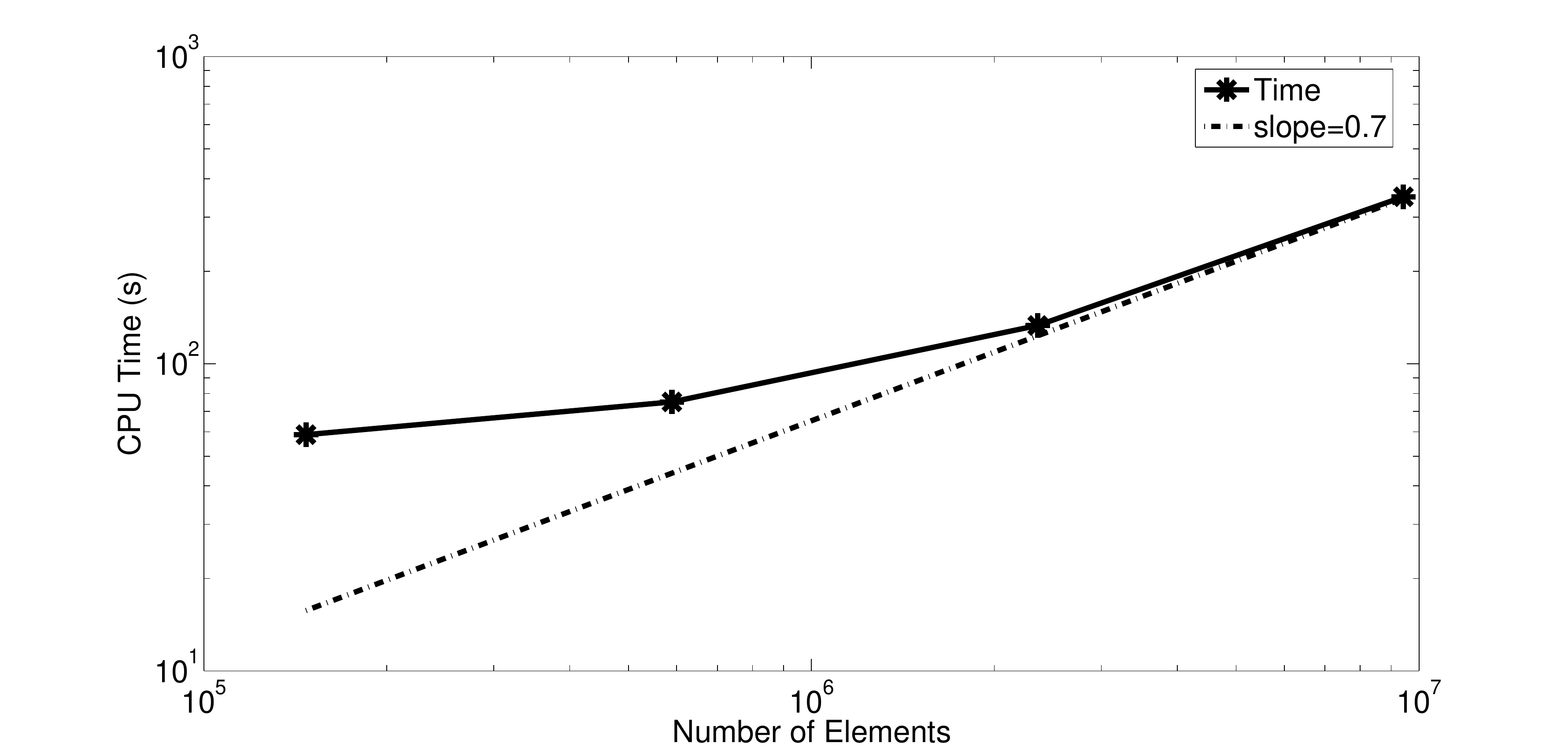}
\caption{CPU time for Algorithm \ref{Multilevel_Correction_Eig} \revise{with $16$ processors},
the left subfigure shows the CPU time when the coarse mesh is chosen as the left one in
Figure \ref{Exam_4_Coarse_Meshes} and the right subfigure shows the CPU time
when the coarse mesh is chosen as the right one in Figure \ref{Exam_4_Coarse_Meshes}.}
\label{Exam_4_CPUTime}
\end{figure}

\section{Conclusions}
In this paper, we design a nonnested augmented subspace method and the corresponding
multilevel correction scheme for solving eigenvalue problems with curved interfaces.
Throughout this paper, we demonstrate that the augmented subspace method can also work on the nonnested
sequence of meshes. The proposed algorithms here provide a way to combine the augmented subspace method (multilevel correction method)
with the moving mesh techniques. This will improve the overall efficiency for solving the eigenvalue problems with anisotropy
and singularity. The method in this paper can be extended to nonlinear eigenvalue problems and this will be our future work.

\section*{Acknowledgments}
We are very grateful to Prof. Pierre Jolivet for his kind discussion and help to implement numerical examples with FreeFEM++.
Especially, Prof. Pierre Jolivet help us to do the efficient interpolation between two nonnested meshes
which is very important for implementing the proposed method in this paper.
Here, we express our thanks for all developers of FreeFEM++.


\end{document}